\theoremstyle{plain}
\newtheorem{theorem}{Theorem}[section]
\newtheorem{lemma}[theorem]{Lemma}
\newtheorem{proposition}[theorem]{Proposition}
\theoremstyle{definition}
\newtheorem*{theorem*}{Theorem}
\numberwithin{equation}{section}
\newcommand{\R}{\mathbb{R}}
\renewcommand{\S}{\mathbf{S}}
\newcommand{\I}{\mathbf{I}}
\newcommand{\J}{\mathbf{J}}
\renewcommand{\L}{\hbox{\raisebox{0.06em}-}\kern-0.45emL}
\renewcommand{\P}{\mathbf{P}}
\newcommand{\N}{\mathbb{N}}
\newcommand{\T}{\mathbf{T}}
\newcommand{\br}[1]{\left({#1}\right)}
\newcommand{\Br}[1]{\left[{#1}\right]}
\newcommand{\BR}[1]{\left\{{#1}\right\}}
\newcommand{\nrm}[1]{\left\|{#1}\right\|}
\newcommand{\abs}[1]{\left|{#1}\right|}
\newcommand{\ang}[1]{\left<{#1}\right>}
\newcommand{\ve}{{\bm{e}}}
\newcommand{\valpha}{{\bm{\alpha}}}
\newcommand{\vbeta}{{\bm{\beta}}}
\newcommand{\vgamma}{{\bm{\gamma}}}
\newcommand{\vxi}{{\bm{\xi}}}
\DeclareRobustCommand{\rchi}{{\mathpalette\irchi\relax}}
\newcommand{\irchi}[2]{\raisebox{\depth}{$#1\chi$}} % inner command, used by \rchi
\newcommand{\supp}{\operatorname{supp}}
\newcommand{\Dil}{D}
\newcommand{\Mod}{M}
\newcommand{\Tr}{T}
\newcommand{\Proj}{P}
\title[bilinear H\"ormander multipliers with Lipschitz singularities]{A sharp  H\"ormander condition for bilinear Fourier multipliers with Lipschitz singularities}
\author{Jiao Chen, Martin Hsu, Fred Yu-Hsiang Lin}
\begin{document}

\begin{abstract}
    This paper studies the $L^{p}$ boundedness of bilinear Fourier  multipliers 
 in the local $L^{2}$ range.
 We assume
 a H\"ormander condition relative to a singular set that is a finite union of Lipschitz curves. The H\"ormander condition is  sharp with respect to the Sobolev exponent. Our setup  generalizes the non-degenerate bilinear Hilbert transform but avoids issues 
 of uniform bounds 
 near degeneracy.
 
\end{abstract}

\maketitle

\normalsize
\tableofcontents

\section{Introduction}

An $n$-linear Fourier multiplier $m$  is a function on
the space $V$ of all points $\xi =(\xi_{1},\cdots ,\xi_{n+1})\in \R^{n+1}$ such that
\[\sum_{j=1}^{n+1}\xi_j=0.\]
It is associated with an $(n+1)$-linear form acting on functions on the real line defined by
\begin{equation}\label{eq_trilinear_form}
        \Lambda_{m} (f_{1},\cdots,f_{n+1}):=\int_{V}m(\xi)\prod_{j=1}^{n+1}\widehat{f}_{j}(\xi_{j})d\mathcal{H}^{n}(\xi).
    \end{equation}
Here $\mathcal{H}^{n}$
denotes the $n$-dimension Hausdorff measure on $V$.

We call such a multiplier $n$-linear as classically one associates to it an $n$-linear operator dual to this $(n+1)$-linear form.

For a multiplier $m$ and a tuple $p=(p_{1},\cdots,p_{n+1})$ of Lebesgue norm exponents in $(1,\infty)^{n+1}$  with
\begin{equation}\label{holderexp}
    \sum_{j=1}^{n+1} \frac 1{p_j}=1,
\end{equation}
we define the constant
$\mathcal{C}(m,p)$ to be the infimum of all constants $C>0$ satisfying 
 \begin{equation}\label{goalbdd}
        |\Lambda_{m}(f_{1},\cdots, f_{n+1})|\leq C \prod_{j=1}^{n+1}\|f_{j}\|_{L^{p_{j}}}
    \end{equation}
 for all tuples of Schwartz functions $f_j$.

We say the form $\Lambda_{m}$ is bounded in the open Banach range if $\mathcal{C}(m,p)$ is finite on all tuples $p$ with \eqref{holderexp} in the range $(1,\infty)^{n+1}$.
We say it is bounded in the 
local $L^{2}$ range if it is bounded
for all tuples $p$ with \eqref{holderexp} in $(2,\infty)^{n+1}$.

Classical works concern classes of
multipliers singular at one point, typically the origin.
These include the Mikhlin class $M_s$, which is all 
multipliers satisfying away from the origin the symbol bounds
\begin{equation}\label{mihlincond}
    |(\partial^{\alpha}m)(\xi)|\leq C_{\alpha}|\xi|^{-\alpha}
\end{equation}
 for all multi-indices $\alpha$ up to order $|\alpha|\le s$.
 Another slightly larger class is H\"{o}rmander class $H_s$, which is the set of all multipliers satisfying 
\begin{equation}\label{Hormcondclassic}
    \operatorname{sup}_{j\in \mathbb{Z}}\|m(2^{j}\cdot)\Psi\|_{H^{s}(V)}<\infty
\end{equation}
where $\Psi$ is a smooth bump function compactly supported away from $0$. Here $H^{s}$ is the $L^{2}$-based inhomogeneous Sobolev norm defined by
\begin{equation}
    \|f\|_{H^{s}(V)}:=\left\|(1+|x|^{2})^{\frac{s}{2}}\widehat{f\cdot d\mathcal{H}^{n}}(x)   \right\|_{L^{2}_{x}(V)},
\end{equation}
where
\begin{equation}
    \widehat{f\cdot d\mathcal{H}^{n}}(x):=\int_{V}f(y)e^{-2\pi ix\cdot y}d\mathcal{H}^{n}(y).
\end{equation}
 
The classical Mikhlin multiplier theorem \cite{mihlin1956} gives boundedness in the open Banach range for linear multipliers $m\in M_{1}$.
 %\[|\alpha|\le \lfloor\frac{d}{2} \rfloor+1\],.
  In \cite{hormander1960estimates}, H\"{o}rmander proved  
  boundedness in the open Banach range for linear multipliers $m$ in $H_s$ with $s>\frac 12$.
  Boundedness for the general
 $n$-linear case in the open Banach range was shown by Coifman and Meyer  \cite{coifman1978commutateurs}, \cite{coifman1978dela} 
  for  $m\in M_{s}$ with $s$ sufficiently large  and by  Tomita \cite{tomita2010hormander} for $m\in H_s$ with the sharp
  condition $s>\frac n2$.
For results concerning exponents outside the Banach range, see \cite{kenig1999multilinear},\cite{grafakos2002multilinear},\cite{grafakos2012hormander},\cite{lee2021hormander}.

More recently, people studied multilinear multipliers with higher dimensional singularities. 
Lacey and Thiele \cite{lt1997} proved bounds in the local $L^2$ range for $n=2$ and $m=\rm{sgn}(\alpha_1\xi_1+\alpha_2\xi_2)$,
the so-called bilinear Hilbert transform, for all vectors $\alpha=(\alpha_1,\alpha_2)$. The bound is non-trivial only for $\alpha$ outside the three so-called degenerate one-dimensional subspaces.
This result was extended to the open Banach range and beyond in \cite{lacey1999calderon}. That this $m$ is a particular instance
of more general multipliers singular along a line was noted by Gilbert and Nahmod \cite{gilbert2000boundedness}, who extended the result accordingly.
Muscalu, Tao, and Thiele \cite{muscalu2002multi} proved bounds in the open Banach  range for $n$-linear multipliers satisfying
\begin{equation}\label{mttcondition}
|\partial^{\alpha}m(\xi)|\lesssim \operatorname{dist}(\xi,\Gamma)^{-|\alpha|}
\end{equation}
for singularity $\Gamma$ a non-degenerate subspace with $\operatorname{dim}{\Gamma}<\frac{n+1}{2}$ and for $\alpha$ up to some large degree that has not been specified in \cite{muscalu2002multi}.

The bounds in \cite{lt1997} are not uniform in $\alpha$. Uniform bounds were proven in \cite{grafakos2004uniform}, \cite{li2006uniform} by Grafakos and Li, and later the range was extended by Uraltsev and Warchalski in \cite{uraltsev2022full}.

Curved singularites were first studied by Muscalu \cite{muscalu2000p}. 
Later, the bound of bilinear disk multiplier was obtained by Grafakos and Li \cite{grafakos2006disc}.

The main theorem of this paper establishes the sharp Sobolev exponent for the H\"ormander condition associated with bilinear
multipliers whose singularities are unions of Lipschitz curves away from degenerate directions.
This is the first work that provides this sharp H\"ormander condition for multilinear multipliers
with singularities of dimension larger than zero.
Moreover, we work in a continuous model without discretization in the vein of \cite{do2015lp} and develop a suitable setting to analyze the geometry arising from the presence of Lipschitz singularity.

Define dilation, translation, and modulation operators
\begin{equation*}
    (D_{a}^{p}f)(x):=a^{-\frac{n}{p}}f\left(\frac{x}{a}\right)
\end{equation*}
\begin{equation*}
    (T_{a}f)(x):=f(x-a)
\end{equation*}
\begin{equation*}
    (M_{a}f)(x):=e^{2\pi iax}f(x).
\end{equation*}
For $\vbeta \in V$, define the distance function $d_\Gamma\br{\vbeta}:=\inf_{\vxi\in\Gamma}\abs{\vbeta-\vxi}$. Let $B_{r}(x)$ denote the open ball with radius $r$ centered at $x$. Let $\eta$ be a $L^{1}$ normalized function supported on $[-1,1]$ defined by
 \[\eta(x)=\br{\int_{-1}^1 e^{\frac {-1}{1-t^2}} dt}^{-1}\cdot e^{\frac {-1}{1-x^2}}\cdot 1_{\Br{-1,1}}(x).\]
 Define
  \begin{equation*}
      \widetilde{\eta}:=1_{B_{\frac{3}{20}}(0)} \ast {\Dil}^{1}_{\frac{1}{100}}\eta
  \end{equation*}
  which is constant one in $B_{\frac{1}{10}}(0)$ and supported on $B_{\frac{2}{10}}(0)$.
Define a smooth function $\Phi$ on $V$ 
\begin{equation*}
    \Phi (x):=\widetilde{\eta}(|x|).
\end{equation*}
For a subspace $A\subseteq \mathbb{R}^{n}$ and a vector $v\in \mathbb{R}^{n}$, we denote the orthogonal projection of $v$ onto $A$ as $P_{A}v$. Let $0\leq \theta_0<\frac \pi 6$.
For $j\in \{1,2,3\}$, let $\mathcal{K}_j(\theta_0)$ be the open double cone of all vectors $\vbeta$ in $V$ which have angle less than
   $\theta_0$
   to the line spanned by by $P_V e_j$, i.e., as the length of $P_Ve_j$ is $\frac {\sqrt{6}}{3}$, $\mathcal{K}_j(\theta_0)$ contains points $\vbeta \in V$ satisfying
\begin{equation}\label{stay_in_one_cond}
        |\ang{\vbeta, e_j}|= |\ang{\vbeta, P_V e_j}|>\frac{\sqrt{6}}{3}\abs{\vbeta}\operatorname{cos}\theta_{0} .
    \end{equation} 

\begin{theorem}\label{mainthm}
   Let $n=2$.  Let $2<p_{1},p_{2},p_{3}<\infty$ with  $\frac{1}{p_{1}}+\frac{1}{p_{2}}+\frac{1}{p_{3}}=1$.
   Let  $0\leq \theta_0<\frac \pi 6.$ Let  $s>1$. There is a constant $C(p_1,p_2,p_3,\theta_{0},s,N)$ such that the following holds.
   
   For every $1\le \iota\le N$, let $\Gamma_\iota\subset V$ be a closed set such that there exists an index $j\in \{1,2,3\}$ such that
   for every distinct $\vgamma, \vgamma'\in \Gamma_\iota $, we have $\vgamma-\vgamma'\in \mathcal{K}_j(\theta_0)$.
  Let $\Gamma$ be the union of the sets $\Gamma_\iota$ for $1\le \iota \le N$. Let \(m\) be a function on \(V\) satisfying 
    \begin{equation}\label{Hormcond}
        \sup_{\vbeta \in V\setminus \Gamma}\left\|\left( {\Dil}^{\infty}_{d_{\Gamma}(\vbeta)^{-1}}{\Tr}_{-\vbeta}m \right) \cdot \Phi\right\|_{H^{s}(V)}\leq 1.
    \end{equation}
    Then we have for the form bound \eqref{goalbdd} the inequality
    \[\mathcal{C}(m,p_1,p_2,p_3)\le 
C(p_1,p_2,p_3,\theta_{0},s,N).\]

\end{theorem}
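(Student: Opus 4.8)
The plan is to reduce the form to a model sum over a Whitney family of frequency bumps and then run a continuous-model time-frequency analysis in the spirit of \cite{do2015lp}, with a tile geometry bent to follow $\Gamma$. First I would take a Whitney-smooth partition of unity $1=\sum_{\iota=1}^N\psi_\iota$ on $V\setminus\Gamma$ with $\psi_\iota$ supported where $\Gamma_\iota$ is, up to a fixed factor, the closest of $\Gamma_1,\dots,\Gamma_N$; since $d_{\Gamma_\iota}\ge d_\Gamma$ with comparability on $\supp\psi_\iota$ and multiplication by the uniformly smooth $\psi_\iota$ is bounded on $H^s$ after rescaling, each $m\psi_\iota$ satisfies \eqref{Hormcond} relative to $\Gamma_\iota$ with an $\iota$-independent bound, so by the triangle inequality it suffices to treat $N=1$, at the cost of a constant depending on $N$. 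Fix then $\Gamma$ closed with all difference vectors in $\mathcal{K}_j(\theta_0)$; it lies in the graph of a $\tan\theta_0$-Lipschitz function over $\ell_j:=\mathrm{span}(P_V e_j)$, with $\tan\theta_0<\tan\tfrac\pi6$. The three directions $\{\xi_i=0\}\cap V$, $i=1,2,3$ --- the singular directions of the three partial bilinear Hilbert transforms --- are the orthocomplements in $V$ of $\ell_1,\ell_2,\ell_3$; since $\ell_1,\ell_2,\ell_3$ are pairwise at angle $\tfrac\pi3$, one checks that $\angle(\ell_j,\{\xi_i=0\}\cap V)$ equals $\tfrac\pi6$ for the two $i\ne j$ and $\tfrac\pi2$ for $i=j$, so every tangent direction of the graph makes angle at least $\tfrac\pi6-\theta_0>0$ with each degenerate direction. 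This scale-independent transversality, with constants depending only on $\theta_0$, is exactly what $\theta_0<\tfrac\pi6$ provides, and is what removes the uniform-in-direction difficulties of the classical bilinear Hilbert transform.

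\textbf{Localization and wave-packet decomposition.} Next I would take a Whitney decomposition of $V\setminus\Gamma$ into cubes $Q$ of sidelength $\sim r_Q:=d_\Gamma(\vbeta_Q)$ with centres $\vbeta_Q$ and a subordinate partition of unity $1=\sum_Q\phi_Q$, $\phi_Q$ a bump adapted to $B(\vbeta_Q,r_Q/10)$. Since $\Phi\equiv1$ there, \eqref{Hormcond} (applied at $\vbeta=\vbeta_Q$) together with the boundedness of multiplication by a fixed Schwartz profile on $H^s(V)$ gives $\nrm{\Dil^\infty_{r_Q^{-1}}\Tr_{-\vbeta_Q}(m\phi_Q)}_{H^s(V)}\lesssim1$, uniformly in $Q$. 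The exponent $s>1=\tfrac n2$ is precisely the Sobolev threshold at which the $H^s$ norm of such a rescaled, localized symbol controls its wave-packet data in the $\ell^2$-with-weights sense needed below (as in Tomita's bilinear H\"ormander theorem \cite{tomita2010hormander}): expanding $\Dil^\infty_{r_Q^{-1}}\Tr_{-\vbeta_Q}(m\phi_Q)$ in a Fourier series and using that the bilinear form intertwines frequency modulation of the symbol with spatial translation of the inputs,
\[ \Lambda_{e^{2\pi i c\cdot\xi}\mu}(f_1,f_2,f_3)=\Lambda_\mu(\Tr_{-c_1}f_1,\Tr_{-c_2}f_2,\Tr_{-c_3}f_3),\qquad c=(c_1,c_2,c_3), \]
with translations preserving $L^{p_i}$ norms, one represents
\[ \Lambda_m(f_1,f_2,f_3)=\sum_p a_p\prod_{i=1}^3\ang{f_i,\varphi_{p,i}}, \]
the sum over tiles $p$ indexed by a Whitney cube, a frequency refinement inside it, and a spatial position; the $\varphi_{p,i}$ are $L^2$-normalized wave packets, and the $a_p$ satisfy, per cube, an $\ell^2$-Sobolev bound of order $s$ that remains summable once the single-tree estimates below are applied --- this is where $s>1$ is used and is sharp.

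\textbf{Curved phase space and outer measure estimates.} The geometry of the tiles is dictated by $\Gamma$: $\phi_Q$ sits at distance $\sim r_Q$ from $\Gamma$ at $\vbeta_Q$, its coordinate projections $\omega_{Q,i}\subset\R$ have length $\sim r_Q$, and by the transversality noted above these intervals move monotonically and stay suitably separated as $Q$ runs along $\Gamma$ and down in scale --- so the tiles $p=(I_p;\omega_{p,1},\omega_{p,2},\omega_{p,3})$, with $\abs{I_p}=r_{Q(p)}^{-1}$, form a grid that bends along $\Gamma$ yet retains the essential combinatorics. Following the continuous model of \cite{do2015lp} I would set up this adapted tile family --- the ``suitable setting'' of the abstract --- then equip its parameter space with outer measures carrying $L^\infty$-type ``sizes'' and $L^2$-type ``energies'' of the coefficient sequences $(\ang{f_i,\varphi_{p,i}})_p$, prove the single-tree estimate $\big|\sum_{p\in T}a_p\prod_i\ang{f_i,\varphi_{p,i}}\big|\lesssim\abs{I_T}\prod_i\mathrm{size}_T(f_i)$, and establish the generalized Carleson embedding and counting lemma, giving $\mathrm{size}(f_i)\lesssim\nrm{f_i}_\infty$ and $\mathrm{energy}(f_i)\lesssim\nrm{f_i}_2$. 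Outer H\"older together with outer interpolation between these $L^\infty$ and $L^2$ endpoints then bounds $\sum_p a_p\prod_i\ang{f_i,\varphi_{p,i}}$ by $\prod_i\nrm{f_i}_{L^{p_i}}$ whenever \eqref{holderexp} holds with every $p_i>2$, i.e.\ throughout the local $L^2$ range, which is the assertion of the theorem.

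\textbf{The main obstacle.} The heart of the matter is the adapted phase space. For a straight-line singularity the projections $\omega_{Q,i}$ slide affinely, the tiles form an honest product grid, and the tile disjointness, tree structure, and geometric series in scales underlying the Lacey--Thiele \cite{lt1997}, Muscalu--Tao--Thiele \cite{muscalu2002multi}, and Do--Thiele \cite{do2015lp} machinery are classical; for a Lipschitz $\Gamma$ the grid is genuinely curved --- a length-$r_Q$ arc of $\Gamma$ need not lie within $o(r_Q)$ of any line --- so one must, first, show that the bounded-slope/transversality hypothesis still yields pairwise almost-disjoint tiles and trees obeying the standard square-function geometry, with all implicit constants controlled by $\theta_0$ alone, and, second, absorb the deviation between $\Gamma$ and its local linearizations, as well as the scale-dependent but tile-width-uniform spatial shear by which the intra-cube frequency refinements displace the wave packets, into the Schwartz tails and the $\ell^2$-Sobolev decay ($s>1$) of the coefficients. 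Carrying out this geometric bookkeeping uniformly for $\theta_0<\tfrac\pi6$, so that no near-degenerate loss ever appears, is the main technical work.
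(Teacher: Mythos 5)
Your overall strategy coincides with the paper's: reduce to a single Lipschitz branch, decompose $m$ into Whitney pieces at scale $d_\Gamma$, pass to a wave-packet/model form, and run a size--energy argument (tree/tent estimate, $L^\infty$ and $L^2$ embeddings, restricted-type interpolation) in the local $L^2$ range, with $s>1=\tfrac n2$ as the summability threshold; your transversality computation (angle $\tfrac\pi6-\theta_0$ to each degenerate direction) matches Lemma \ref{lemawayfromortho}. Two of your choices genuinely differ. First, the paper reduces to $N=1$ not by multiplying the symbol by smooth cutoffs $\psi_\iota$ but by partitioning the $\vbeta$-integration domain of the model form into the sets $V_\iota$ where $d_\Gamma=d_{\Gamma_\iota}$; your symbol-level reduction can be made to work (on a test ball $B(\vbeta,d_{\Gamma_\iota}(\vbeta)/5)$ one has $d_{\Gamma_\iota}\sim d_{\Gamma_\iota}(\vbeta)$, hence $d_\Gamma\sim d_{\Gamma_\iota}(\vbeta)$ on $\supp\psi_\iota$ there), but a partition built from the distance functions is only Lipschitz and must be mollified at scale $d_\Gamma$ before it multiplies $H^s$ with $s>1$, and the verification of \eqref{Hormcond} for $m\psi_\iota$ relative to $\Gamma_\iota$ is a real step; the paper's route sidesteps it. Second, the paper stays continuous: it writes $1_{V\setminus\Gamma}=\int_V\widetilde{\rchi}_{\vbeta}\,d\mu(\vbeta)$ with $d\mu=d\mathcal{H}^2/d_\Gamma^2$, applies Plancherel, and works with the kernel $K(\valpha,\vbeta)$ satisfying \eqref{kernelcond} (the Banach-algebra property of $H^s(V)$ for $s>1$ is what transfers \eqref{Hormcond} to $K$), rather than expanding each Whitney cube in a Fourier series and summing discrete tiles.

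The gap is that everything you defer to ``the main technical work'' is precisely the paper's new content, and it is not routine. Concretely: (i) the sharp exponent is not obtained from an abstract ``$\ell^2$-Sobolev decay of coefficients'' but from a decomposition of the kernel into shells $E_k=\{\,\abs{d_\Gamma(\vbeta)\Proj_V\valpha}\sim 2^k\,\}$ inside the tent estimate, with the Whitney region over a tent split according to whether $\abs{\vbeta-\vgamma}$ exceeds $2^k/\abs{I}$ and the two pieces estimated by $L^\infty$ and $L^2$ components of the size respectively, extracting the factor $(1+k)2^{k(1-s)}$ whose summability is exactly $s>1$; (ii) the orthogonality for a curved singularity is not ``almost-disjointness of a bent grid'' but the quantitative separation statements \eqref{select1-31}--\eqref{select1-32} and \eqref{select2-31}--\eqref{select2-32}, which rest on the Apollonian-circle inclusion (Lemma \ref{appolem}), the tent nesting and separation geometry of Proposition \ref{proptentgeo}, and two stopping-time selection algorithms run along the ordered $j$-th coordinate of $\Gamma$ in strips of width $\delta_0(1-\delta_1)t$ --- this is where $\theta_0<\tfrac\pi6$, through the constants $\delta_0,\delta_1,\delta_2,\rho$, actually does its work. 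Until these are supplied, the proposal is a plausible roadmap rather than a proof.
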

Note that the theorem applies in particular to the case \[m(\xi_{1},\xi_{2},\xi_{3})=\widetilde{m}(\xi_{1}-\xi_{2})\] 
with $\widetilde{m}$ satisfying the H\"{o}rmander condition on real line for $s>1$.

\begin{figure}
    \centering
    \includegraphics[width=0.4\textwidth,height=6cm]{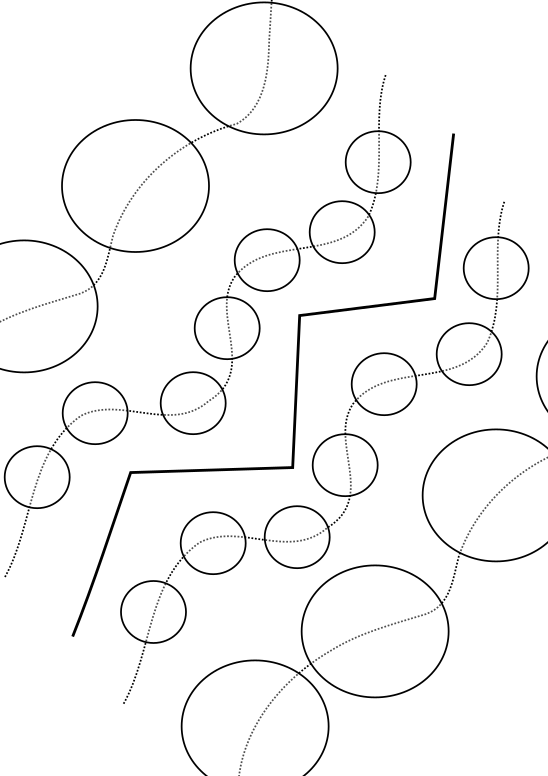}
    \caption{We may view \eqref{Hormcond} as testing Sobolev norm of $m$ on scaled Whitney bumps.}
    \label{fig:enter-label}
\end{figure}

The exponent $s$ in this theorem is sharp. It 
suffices to establish the sharpness of the exponent $s$ in the case where $\Gamma$ is a point. This sharpness has been
discussed in \cite{grafakos2018hormander}.

The assumption \eqref{stay_in_one_cond} says that the tangencies of each Lipschitz curve stay away from a fixed angle from the degenerate direction. 
While bounds for specific examples such as circular arcs with degenerate tangencies have been established in the literature
\cite{grafakos2006disc}, even the question for convex arcs in general with degenerate tangencies appears to be very difficult, as discussed in \cite{saari2023paraproducts}.

\textbf{Acknowledgments.}
F.Y-H. Lin is supported by the DAAD Graduate School Scholarship Programme - 57572629 and the Deutsche Forschungsgemeinschaft (DFG, German 
 Research Foundation) under Germany’s Excellence Strategy–EXC-2047/1–3
 90685813 as well as SFB 1060. This project was initiated and developed during Jiao Chen and Martin Hsu's visit to Bonn. The authors are grateful to Prof. Christoph Thiele for his generous hospitality and countless inspiring insights.

\section{Overview of the proof}

 For the proof of Theorem \ref{mainthm}, fix  $2<p_{1},p_{2},p_{3}<\infty$ with  $\frac{1}{p_{1}}+\frac{1}{p_{2}}+\frac{1}{p_{3}}=1$.
Fix also  \(0\leq \theta_{0}<\frac \pi 6\). Fix $s>1$. 
Let $N\in \N$.
For any quantities $A$, $B$ depending on these and possibly further  parameters, which most prominently 
will be $\Gamma$, $m$, we will write
\(A\lesssim B\) whenever \(A\leq C B\)
for some number $C$ depending on $p_1,p_2,p_3,\theta_{0},s,N$ only but not on the parameters. Analogously, we write \(A\gtrsim B\) whenever \(B\lesssim A\). If in particular, \(A\lesssim B\) and \(A\gtrsim B\) simultaneously we write \(A\sim B\).
 
For $(S,\mathcal{A},\mu)$ a measure space and $f$ a measurable function on this space, the $L^{p}$ norm of $f$ will be expressed as
\begin{equation*}
   \|f\|_{L^{p}_{\mu}(S)}=\|f(x)\|_{L^{p}_{\mu (x)}(S)}:=\left(\int_{S}|f|^{p}(x)d\mu(x)  \right)^{\frac{1}{p}}.
\end{equation*}
Furthermore, if $\mu (S)<\infty$, we define the average $L^{p}$ norm of $f$ as
\begin{equation*}
    \|f\|_{\L^{p}_{\mu}(S)}=\|f(x)\|_{\L^{p}_{\mu (x)}(S)}:=\left(\frac{1}{\mu (S)}\int_{S}|f|^{p}(x)d\mu(x)  \right)^{\frac{1}{p}}=\left(\fint_{S}|f|^{p}(x)d\mu(x)  \right)^{\frac{1}{p}}.
\end{equation*}
If it's clear from the context that the integration is over a space $V$ isomorphic to an $n$-dimensional Euclidean space with the usual $n$-dimensional Hausdorff measure, we simply write   $\|f(x)\|_{L^{p}_{x}(V)}$ instead of $\|f(x)\|_{L^{p}_{\mathcal{H}^{n}(x)}(V)}$.

% Let $\Gamma_{\iota}$ and $\Gamma$ be given as in the Theorem \ref{mainthm}.
% Let $m$ be a multiplier as in Theorem \ref{mainthm} and let $\Lambda_m$ as in 
% \eqref{eq_trilinear_form}.
Theorem \ref{mainthm} will be proven in Section \ref{proofmainthm} by reducing
to Proposition \ref{boundmodelform} which states a bound of a model form.

We write $\valpha=(\alpha_{1},\alpha_{2},\alpha_{3})$ for a typical element in $\R^3$ and $\vbeta=(\beta_{1},\beta_{2},\beta_{3})$ for a typical element on $V$.
Define $\mu$ a measure on $V$ which assigns zero measure to $\Gamma$ and has density
\[d\mu (\vbeta):=\frac{d\mathcal{H}^{2}(\vbeta)}{d_{\Gamma}(\vbeta)^{2}}\]
on $V\setminus \Gamma$. Define a measure on $\mathbb{R}\times V$ by
\begin{equation*}
    d\nu (\alpha,\vbeta):=d\alpha\otimes d\mu (\vbeta).
\end{equation*}
Let ${\Proj}_V$ be the orthogonal projection from $\R^3$ 
  onto $V$. Define a smooth function on $\mathbb{R}$
\begin{equation*}
    \widehat{\varphi}:={\Dil}^{\infty}_{2\varepsilon}\widetilde{\eta},
\end{equation*}
which is constant one in $B_{\frac{2\varepsilon}{10}}(0)$ and supported on $B_{\frac{4\varepsilon}{10}}(0)$. The number \(\varepsilon\) is a small constant which only depends on \(\theta_0\). The specific value of \(\varepsilon\) will be determined in Section \ref{secbessel}.

\begin{proposition}[Bound of the model form]\label{boundmodelform}
  Let $K:\R^3\times V\to \mathbb{C}$ be a continuous function  satisfying
  \eqref{kab} and \eqref{kernelcond} below.
  For all $\valpha\in \R^3$ and $\vbeta\in V$,
  \begin{equation}\label{kab}
  K(\valpha, \vbeta)=K({\Proj}_V\valpha, \vbeta).
  \end{equation}    For all  $\vbeta\in V $, $s>1$,
\begin{equation}\label{kernelcond}
    \left\|\left(1+|d_{\Gamma}(\vbeta)\valpha|^{2}\right)^{\frac{s}{2}}\cdot {K}(\valpha, \vbeta)\right\|_{L^{2}_{\valpha}(V)}\lesssim d_{\Gamma}(\vbeta).
\end{equation}
     Then for all Schwartz functions  
    $f_{1},f_{2},f_{3}$ on $\mathbb{R}$, we have the bound
\begin{equation}\label{modelbound}
    \left|\int_{V}\int_{\mathbb{R}^{3}}K(\valpha, \vbeta)\cdot \prod_{j=1}^{3}\left((\Mod_{\beta_{j}}\Dil_{d_{\Gamma}(\vbeta)^{-1}}^{1}\varphi) \ast f_{j}\right)(\alpha_{j})d\valpha d\mu (\vbeta)\right|
   \lesssim \prod_{j=1}^3 \|f_j\|_{p_j}.
\end{equation}

\end{proposition}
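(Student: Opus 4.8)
The plan is to read \eqref{modelbound} as a continuous superposition, over scales and frequencies parametrised by $\vbeta\in V\setminus\Gamma$, of bilinear-Hilbert-transform wave packet forms, and to bound it by the outer $L^{p}$ method in the vein of \cite{do2015lp}. First, hypothesis \eqref{kab}, through a partial Fourier transform in the $\vone=(1,1,1)$ direction, lets one integrate out that direction and rewrite $\prod_{j}\bigl((\Mod_{\beta_{j}}\Dil^{1}_{d_{\Gamma}(\vbeta)^{-1}}\varphi)\ast f_{j}\bigr)(\alpha_{j})$, integrated against $K$, as a trilinear form on the hyperplane $\sum_{j}\xi_{j}=0$ in which each $f_{j}$ is frequency-localised to an interval of length $\sim\varepsilon\, d_{\Gamma}(\vbeta)$ about $\beta_{j}$. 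For the kernel itself, note that since $s>1=n/2$, Cauchy--Schwarz in $\valpha$ against $(1+|d_{\Gamma}(\vbeta)\valpha|^{2})^{-s/2}$ turns \eqref{kernelcond} into $\|K(\cdot,\vbeta)\|_{L^{1}(V)}\lesssim1$ uniformly in $\vbeta$, while the full weighted bound records the polynomial decay of $K(\cdot,\vbeta)$ at scale $d_{\Gamma}(\vbeta)^{-1}$ that will be needed to absorb the Schwartz tails of the wave packets; equivalently, \eqref{kernelcond} is exactly the statement that $K$ has unit norm in the $L^{\infty}$ outer space dual to the wave-packet spaces used below, so that $K$ may henceforth be treated as a unit coefficient --- alternatively, one can use $s>1$ again to expand $K$ in a uniformly $\ell^{1}$-summable family of Schwartz bumps rescaled by $d_{\Gamma}(\vbeta)$ and absorb the accompanying characters into the $\beta_{j}$ and shifts of size $\lesssim d_{\Gamma}(\vbeta)$ into $\vbeta$.

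\emph{The geometry.} Since $N$ is finite, I would split the $\vbeta$-integral according to which $\Gamma_{\iota}$ attains $d_{\Gamma}(\vbeta)$, the off-diagonal terms being absorbed by the $L^{1}$ bound on $K$ and the Whitney separation of the pieces, and thereby reduce to one closed set $\Gamma_{\iota}$ all of whose secants lie in a single cone $\mathcal{K}_{j}(\theta_{0})$. Because $\theta_{0}<\tfrac{\pi}{6}$ while the degenerate lines of $V$ make angle exactly $\tfrac{\pi}{6}$ or $\tfrac{\pi}{2}$ with $\R P_{V}e_{j}$, the set $\Gamma_{\iota}$ lies on a Lipschitz graph over $\R P_{V}e_{j}$ whose secants stay uniformly away from every degenerate direction. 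Parametrising a Whitney neighbourhood of $\Gamma_{\iota}$ by arclength along a graphing curve times signed transverse distance $=(u,r)$, so that $d_{\Gamma_{\iota}}(\vbeta)\sim|r|$ and $d\mathcal{H}^{2}(\vbeta)\sim du\,dr$, and adjoining the spatial variable, one recovers the three-parameter tile structure of the bilinear Hilbert transform, ``trees'' being tent-like regions lying over a fixed spatial point and a fixed point of $\Gamma_{\iota}$. The cone condition guarantees that for every $\vbeta$ the three frequency intervals of length $\sim\varepsilon\, d_{\Gamma_{\iota}}(\vbeta)$ about the $\beta_{j}$ satisfy the separation demanded of bilinear-Hilbert-transform wave packets, with constants depending on $\theta_{0}$ only --- which is why no bound uniform up to a degeneracy is ever needed.

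\emph{Embedding, the single-tree bound, and summation.} I would then introduce the wave packet transform $F_{j}(\vbeta,y):=\bigl((\Mod_{\beta_{j}}\Dil^{1}_{d_{\Gamma}(\vbeta)^{-1}}\varphi)\ast f_{j}\bigr)(y)$; the model form becomes the pairing of $F_{1}F_{2}F_{3}$ against a unit coefficient over the tent space, so an outer H\"older inequality dominates it by $\prod_{j=1}^{3}\|F_{j}\|_{L^{p_{j}}(\text{outer})}$ once two ingredients are in hand. The first is a single-tree estimate: one tree contributes $\lesssim\prod_{j}\operatorname{size}_{j}$, where $\operatorname{size}_{j}$ is an $L^{2}$-normalised local norm of $F_{j}$ on the tree --- a one-scale Carleson/paraproduct bound of bilinear-Hilbert-transform type, using the $L^{1}$ control of $K$ and its tail decay. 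The second is the pair of size and energy bounds for the $F_{j}$: the size is dominated by the Hardy--Littlewood maximal function of $f_{j}$ and the energy (the $L^{2}$-based quantity) by an $L^{2}$ square function, both bounded on $L^{p_{j}}$ precisely because $p_{j}>2$, which is the role played by the local $L^{2}$ range. The $L^{2}$ orthogonality behind the energy bound is the Bessel inequality of Section \ref{secbessel}, where the small constant $\varepsilon=\varepsilon(\theta_{0})$ is fixed so that distinct Whitney frequency intervals at a common scale overlap with bounded multiplicity. Outer $L^{p}$ interpolation between the size and energy bounds then yields $\|F_{j}\|_{L^{p_{j}}(\text{outer})}\lesssim\|f_{j}\|_{p_{j}}$ for all $2<p_{j}<\infty$, and combining the three with the single-tree estimate gives \eqref{modelbound}.

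\emph{The main obstacle.} The crux is the geometric step: building the tent/tree structure and carrying out the stopping-time selection, the Bessel inequality, and the maximal estimates over the Whitney decomposition of a set supported only on a Lipschitz graph, where one can neither differentiate $\Gamma_{\iota}$ nor rely on exact dilation or modulation symmetry. The cone condition $\theta_{0}<\tfrac{\pi}{6}$ confines everything to a non-degenerate regime, but each estimate must be set up in a form stable under a bi-Lipschitz --- rather than affine --- straightening of $\Gamma_{\iota}$, with the error terms produced by the straightening shown to be summable; arranging all of this is the ``suitable setting to analyze the geometry arising from the presence of Lipschitz singularity'' announced in the introduction.
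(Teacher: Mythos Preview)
Your proposal is correct and follows essentially the same strategy as the paper: reduce to a single $\Gamma_\iota$, prove a single-tree (tent) estimate using the kernel condition, establish size ($L^\infty$) and energy (Bessel/$L^2$) bounds for the wave-packet embeddings $F_j$, and interpolate in the local $L^2$ range. Two implementation differences are worth noting. First, where you invoke the outer $L^p$ machinery of \cite{do2015lp} and an outer H\"older inequality, the paper runs the equivalent restricted weak-type argument of Kova\v{c} \cite{kovavc2015dyadic}: normalise $|f_j|\le 1_{E_j}$, iterate the Bessel-type estimate (Proposition~\ref{prop_L2_emb}) at thresholds $2^n$ to peel off tents, apply the tent estimate (Proposition~\ref{propTentest}) on each, and sum the resulting geometric/logarithmic series. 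Second, and more substantively, where you propose to parametrise a Whitney neighbourhood by arclength along a Lipschitz graphing curve and straighten bi-Lipschitzly, the paper never straightens: it works directly with the intrinsic Whitney sets $W_{\vgamma,t}$ and the angular sectors $U_\vgamma^j$, and the Apollonian-circle lemmas of Proposition~\ref{proptentgeo} supply the nesting and separation properties that your straightening would have encoded. This has the advantage that $\Gamma_\iota$ need only be a closed set with the secant-cone condition, not a connected curve with well-defined arclength, and avoids the error terms you anticipate from a bi-Lipschitz change of variables. Your identification of the sharp $s>1$ threshold is also right in spirit, but note that merely using $\|K(\cdot,\vbeta)\|_{L^1(V)}\lesssim 1$ loses the sharp exponent; the paper's tent estimate achieves it via the dyadic decomposition $K=\sum_k K1_{E_k}$ and a scale-splitting of $W_{\vgamma,1/|I|}$ that produces the summable factor $(1+k)2^{k(1-s)}$.
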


Proposition \ref{boundmodelform} is proven in Section \ref{sec_proofmodelbound}
by first reducing to the special case $N=1$.
Fix from now on $K$ as in Proposition \ref{boundmodelform}.
% \begin{proposition}
%     For every \(f_1,f_2,f_3\in\mathcal{S}\br{\R}\), for all continuous function $K:\R^3\times V\to \mathbb{C}$ such that for all 
%      $\vbeta\in V $ 
% \begin{equation}\label{kernelcond}
%     \left\|\left(1+|d_{\Gamma}(\vbeta)\valpha|^{2}\right)^{\frac{s}{2}}\cdot K(\valpha, \vbeta)\right\|_{L^{2}_{\valpha}(V)}\lesssim 1,
% \end{equation}
% % \(K\) e the \eqref
%     \begin{equation}
%         \Lambda (f_{1},f_{2},f_{3})=\int_{V}\int_{\mathbb{R}^{3}}d_{\Gamma}(\vbeta)^{\frac{1}{2}}K(\valpha, \vbeta)\cdot \prod_{j=1}^{3}(\varphi_{\vbeta ,j}\ast f_{j})(\alpha_{j})d\valpha d\mathcal{H}^{2}(\vbeta)
%     \end{equation}
% \end{proposition}
Define $\delta_{0}:=\frac{\sqrt{6}}{3}\operatorname{cos}(\theta_{0}+\frac{\pi}{3})$.

\begin{figure}
    \centering
\includegraphics[width=0.6\textwidth,height=8cm]{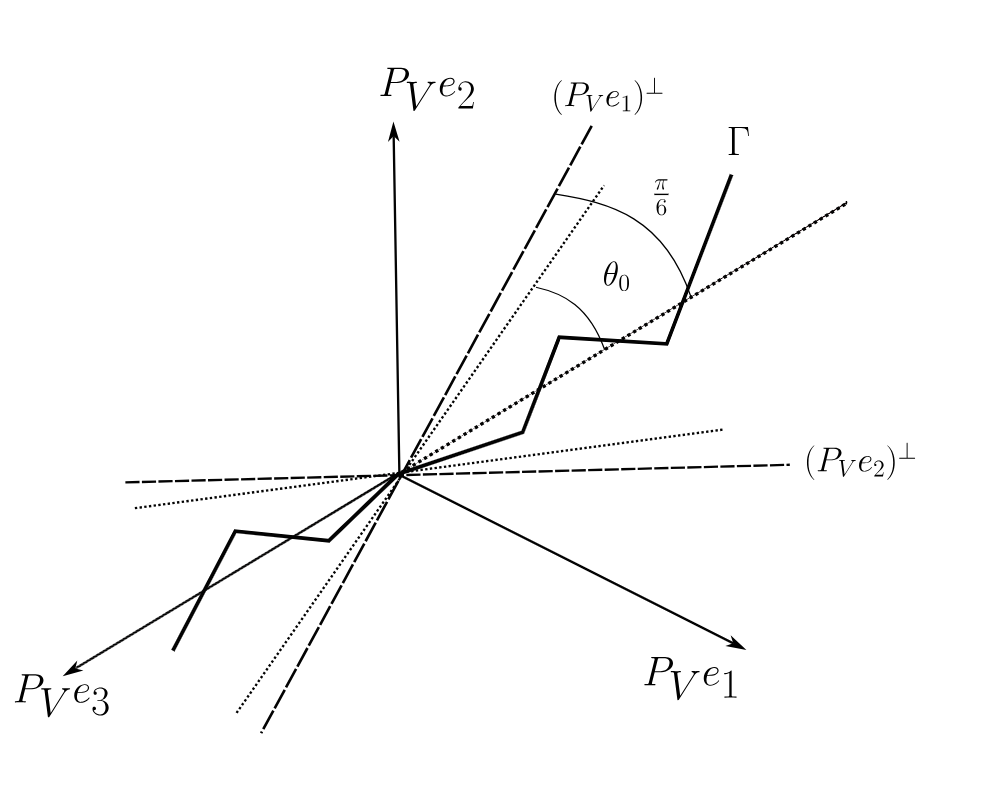}
    \caption{Lemma \ref{lemawayfromortho} explains that $\Gamma$ is also away from all the three degenerate directions.}
    \label{fig:enter-label}
\end{figure}

\begin{lemma}\label{lemawayfromortho}
    \noindent Assume $N=1$, for all $j=1,2,3$, we have for any $\vgamma,\vgamma'\in \Gamma$,
 \begin{equation}\label{singcond}
        |\ang{\vgamma-\vgamma', e_j}|=|\ang{\vgamma-\vgamma', P_{V}e_j}|\geq \delta_{0} \abs{\vgamma-\vgamma'}.
    \end{equation} 
\end{lemma}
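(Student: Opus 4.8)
The plan is to unwind the definitions. We are in the case $N=1$, so by hypothesis there is a single index $j\in\{1,2,3\}$ such that every difference $\vgamma-\vgamma'$ of points in $\Gamma$ lies in the double cone $\mathcal{K}_j(\theta_0)$. Fix this $j$ and fix distinct $\vgamma,\vgamma'\in\Gamma$; write $\vbeta=\vgamma-\vgamma'\in\mathcal{K}_j(\theta_0)$, so that $|\langle\vbeta,e_j\rangle|=|\langle\vbeta,P_Ve_j\rangle|>\tfrac{\sqrt6}{3}|\vbeta|\cos\theta_0$ by \eqref{stay_in_one_cond}. We must show that for \emph{each} of the three indices $k\in\{1,2,3\}$ (not just $k=j$) we have $|\langle\vbeta,e_k\rangle|\ge\delta_0|\vbeta|$ with $\delta_0=\tfrac{\sqrt6}{3}\cos(\theta_0+\tfrac\pi3)$. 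The first equality $|\langle\vbeta,e_k\rangle|=|\langle\vbeta,P_Ve_k\rangle|$ is immediate since $\vbeta\in V$, so $\langle\vbeta,e_k\rangle=\langle\vbeta,P_Ve_k\rangle$.

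For the inequality, the key geometric fact is that the three vectors $P_Ve_1,P_Ve_2,P_Ve_3$ in the plane $V$ all have length $\tfrac{\sqrt6}{3}$ and, because $e_1+e_2+e_3$ is normal to $V$, they sum to zero; hence they are three unit-length-scaled vectors at mutual angles of $\tfrac{2\pi}{3}$. Thus for any nonzero $\vbeta\in V$, if $\vbeta$ makes angle $\phi$ with the line spanned by $P_Ve_j$, then it makes angle $\phi\pm\tfrac{2\pi}{3}$ (mod $\pi$, since these are lines) with the lines spanned by $P_Ve_k$ for the other two indices $k$. The hypothesis $\vbeta\in\mathcal{K}_j(\theta_0)$ says the angle to the $P_Ve_j$-line is less than $\theta_0$. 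Therefore the angle to either of the other two lines is, modulo $\pi$, at least $\tfrac{2\pi}{3}-\theta_0$, equivalently at most $\tfrac\pi3+\theta_0$ when measured as the acute angle between lines. Consequently
\[
|\langle\vbeta,P_Ve_k\rangle|=|\vbeta|\,|P_Ve_k|\,|\cos(\text{angle})|\ge |\vbeta|\cdot\tfrac{\sqrt6}{3}\cdot\cos\!\Big(\tfrac\pi3+\theta_0\Big)=\delta_0|\vbeta|,
\]
using that $\theta_0<\tfrac\pi6$ guarantees $\tfrac\pi3+\theta_0<\tfrac\pi2$ so the cosine is positive and the acute-angle bound is the relevant one. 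Since $\langle\vgamma-\vgamma',e_k\rangle=\langle\vbeta,P_Ve_k\rangle$, this is exactly \eqref{singcond}.

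I would organize the write-up as: (1) record that $P_Ve_i$ have common length $\tfrac{\sqrt6}{3}$ and sum to zero, deducing the $\tfrac{2\pi}{3}$ angular spacing; (2) translate the cone membership $\vbeta\in\mathcal{K}_j(\theta_0)$ into the angle bound $\angle(\vbeta,\mathrm{span}\,P_Ve_j)<\theta_0$; (3) conclude the complementary angle bound $\le\tfrac\pi3+\theta_0$ for the other two directions and read off the cosine estimate. The main (and only mild) obstacle is bookkeeping the distinction between angles of vectors versus angles of lines — i.e. working modulo $\pi$ — and making sure the worst case $\tfrac\pi3+\theta_0<\tfrac\pi2$ indeed keeps the cosine positive, which is precisely where the standing assumption $\theta_0<\tfrac\pi6$ is used; everything else is a direct computation with the explicit planar geometry of the three projected basis vectors.
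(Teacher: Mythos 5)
Your proposal is correct and follows essentially the same route as the paper: reduce to the planar geometry of the three projected vectors $P_Ve_1,P_Ve_2,P_Ve_3$ (common length $\tfrac{\sqrt6}{3}$, mutual line-angle $\tfrac{\pi}{3}$), convert the cone hypothesis into the angle bound $<\theta_0$ on the distinguished direction, and deduce the worst-case angle $\tfrac{\pi}{3}+\theta_0<\tfrac{\pi}{2}$ to the other two directions, yielding the cosine bound $\delta_0$. The paper phrases the angle transfer via the orthogonal complement of $P_Ve_j$ inside $V$ rather than the $\tfrac{2\pi}{3}$ spacing of the vectors, but this is only a presentational difference.
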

\begin{proof}
 The equality on the left of \eqref{singcond} is true because $\vgamma-\vgamma'$ belongs to $V$. Notice from \eqref{stay_in_one_cond} that the angle between $\vgamma-\vgamma'$ and $P_{V}e_{j_{0}}$ is $\theta_{0}<\frac{\pi}{6}$. For $j\neq j_{0}$, the angle between the orthogonal 
 complement of $P_{V}e_{j}$ and $P_{V}e_{j_{0}}$ is $\frac{\pi}{6}$. Hence the angle between $\vgamma-\vgamma'$ and the orthogonal complement of $P_{V}e_{j}$ is a least $\frac{\pi}{6}-\theta_{0}$, and the angle between $\vgamma-\vgamma'$ and $P_{V}e_{j}$ is at most $\theta_{0}+\frac{\pi}{3}$. Then since $|P_{V}e_{j_{0}}|=\frac{\sqrt{6}}{3}$, we have \eqref{singcond}.
\end{proof}

Let $\theta_{1}=\frac{\pi}{18}-\frac{\theta_{0}}{3}$ be a fixed constant such that $\frac{\pi}{3}+\theta_{0}+\theta_{1}<\frac{\pi}{2}$ and $\sin\theta_{1}<\frac{\sqrt{6}}{3}\cos(\frac{\pi}{3}+\theta_{0}+\theta_{1})$. Let $\delta_{1}=\sin \theta_{1}$ and $\delta_{2}= \frac{\sqrt{6}}{3}\cos (\frac{\pi}{3}+\theta_{0}+\theta_{1})$.

For $\vgamma\in \Gamma$, $t\geq 0$, and $j\in \{1,2,3\}$ define the sets, $W_{\vgamma ,t}$, and $U_{\vgamma}^{j}$ as follow
\begin{equation}\label{defwgt}
  W_{\vgamma,t}:=\{\vbeta \in V: t \leq |\vbeta-\vgamma| \leq \frac{1}{\delta_{1}}d_\Gamma\br{\vbeta}\}.
% \leq \abs{\vbeta-\vgamma}. 
\end{equation}
%Let $W_{\vgamma, t}$ be the set of $\vbeta\in W_\vgamma$ such that 
\begin{equation}\label{Uangle}
    U_{\vgamma}^{j}:=\left\{\vbeta\in V : |\langle \vbeta-\vgamma ,e_{j}\rangle |\leq  \delta_{2}|\vbeta-\vgamma| \right\}.
\end{equation}

Let $\mathcal{I}$ be the collection of all intervals in $\mathbb{R}$. Let $ \mathbf{T}$ be the set of all pairs $T=(I,\vgamma)$ with $I\in \mathcal{I}$ and $\vgamma\in \Gamma$. For such $T$, we associate a region $D_{T}:=I\times W_{\vgamma,1/|I|}$. We choose the letter $T$ here because parts of the literature \cite{do2015lp} refer to closely related objects as tents. 
We define for $j\in \left\{1,2,3\right\}$
and a function $f$ on $\R$ the function $F_jf$ on $\mathbb{R}\times V$ by
\begin{equation}\label{definefj}
(F_{j}f)(\alpha_{j},\vbeta):=\left((\Mod_{\beta_{j}}\Dil_{d_{\Gamma}(\vbeta)^{-1}}^{1}\varphi) \ast f\right)(\alpha_{j}).
\end{equation}

For a set $I \times W_{\gamma ,t}\subseteq \mathbb{R}\times V$, an index $j\in \{1,2,3\}$, and a function $F$ on $\mathbb{R}\times V$, we define a local size $S^j$ of $F$ associated with $I \times W_{\gamma ,t}$ 
\begin{equation}\label{sizedef}
    \|F\|_{S^{j}(I,\vgamma,t)}:=|I|^{-\frac{1}{2}}\|F\|_{L^{2}_{\nu}(I\times (W_{\vgamma, t}\setminus U_\vgamma^j))}\vee \nrm{F}_{L^{\infty}(I \times W_{\gamma ,t})}
\end{equation}
\begin{equation*}
    =\left(\frac{1}{|I|}\int_{I\times\br{W_{\vgamma, t}\setminus U_\vgamma^j}}|F\br{\alpha,\vbeta}|^{2} d\alpha d\mu (\vbeta)\right)^{\frac{1}{2}}\vee \nrm{F}_{L^{\infty}(I \times W_{\gamma ,t})}.
\end{equation*}
If in particular $t=\frac{1}{|I|}$, we write 
\begin{equation*}
    \|F\|_{S^{j}(I,\vgamma)}:=\|F\|_{S^{j}(I,\vgamma,t)}.
    %\|F\|_{L^{2}_{avg}(I)\otimes L^{2}_{d\mu}(W_{\vgamma, 1/|I|}\setminus U_\vgamma^j)}\vee \nrm{F}_{L^{\infty}(r_T)}
\end{equation*}
We also define a global size
\begin{equation*}
    \|F\|_{S^{j}}:=\underset{{I\in \mathcal{I}, \vgamma\in \Gamma}}{\operatorname{sup}}\|F\|_{S^{j}(I,\vgamma)}.
\end{equation*}
The model form is estimated first on certain regions associated to tents in Proposition \ref{propTentest}. To obtain the sharp regularity $s$ in the form of condition \eqref{Hormcond}, we prove Proposition \ref{propTentest} in Section \ref{sectentest} by splitting the frequency region into small and large scale, then performing different estimates on the respective pieces.

\begin{proposition}[Tent Estimate]\label{propTentest}
Assume $N=1$. Let $i\in \{1,2,3\}$. Let $I\in \mathcal{I}$ and $\vgamma \in \Gamma$. Then we have the inequality 
\begin{equation}\label{eq216}
    \left\|K(\valpha, \vbeta)\cdot \prod_{j=1}^{3}(F_{j}f_{j})(\alpha_{j},\vbeta)\right\|_{L^1_{\valpha, \mu(\vbeta)}((I\ve_{i}\oplus \ve_{i}^{\perp})\times W_{\vgamma,1/|I|})}\lesssim |I|\cdot \prod_{j=1}^{3}\|F_{j}f_{j}\|_{S^{j}}.
\end{equation}
\end{proposition}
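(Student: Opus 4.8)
The plan is to expand the $L^{1}$ norm in \eqref{eq216} as an integral, so that the claim becomes
\[
\int_{W_{\vgamma,1/|I|}}\int_{\{\valpha\in\R^{3}:\ \alpha_{i}\in I\}}|K(\valpha,\vbeta)|\prod_{j=1}^{3}\big|(F_{j}f_{j})(\alpha_{j},\vbeta)\big|\,d\valpha\,d\mu(\vbeta)\ \lesssim\ |I|\prod_{j=1}^{3}\|F_{j}f_{j}\|_{S^{j}},
\]
and to record the consequences of \eqref{kernelcond} that will be used. Since $V$ is two‑dimensional, $(1+|d_{\Gamma}(\vbeta)\valpha|^{2})^{-s/2}\in L^{2}(V)$ precisely when $s>1$, with norm $\sim d_{\Gamma}(\vbeta)^{-1}$; Cauchy--Schwarz against \eqref{kernelcond} then gives $\|K(\cdot,\vbeta)\|_{L^{1}(V)}\lesssim1$, while dropping the weight gives the cruder $\|K(\cdot,\vbeta)\|_{L^{2}(V)}\lesssim d_{\Gamma}(\vbeta)$, and more precisely the $L^{1}$ mass of $K(\cdot,\vbeta)$ on the annulus $\{|\valpha|\sim2^{\ell}d_{\Gamma}(\vbeta)^{-1}\}$ is $\lesssim2^{\ell(1-s)}$. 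The summability of these annular masses is the only place $s>1$ is used, matching the sharpness of the exponent in \eqref{Hormcond}.

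Next I change variables, writing $\valpha=\vxi+t(1,1,1)$ with $\vxi={\Proj}_{V}\valpha\in V$ and $t\in\R$, so $K(\valpha,\vbeta)=K(\vxi,\vbeta)$, $\alpha_{j}=\xi_{j}+t$, the constraint $\alpha_{i}\in I$ reads $t\in I-\xi_{i}$, and $d\valpha=\sqrt3\,d\mathcal{H}^{2}(\vxi)\,dt$. The geometric input is the elementary fact that every $\vbeta\in V\setminus\{\vgamma\}$ lies outside at least two of the three cones $U^{1}_{\vgamma},U^{2}_{\vgamma},U^{3}_{\vgamma}$ of \eqref{Uangle}: if $\vbeta$ belonged to two of them, say $U^{1}_{\vgamma}\cap U^{2}_{\vgamma}$, then $|\langle\vbeta-\vgamma,e_{3}\rangle|=|\langle\vbeta-\vgamma,e_{1}\rangle+\langle\vbeta-\vgamma,e_{2}\rangle|\le2\delta_{2}|\vbeta-\vgamma|$ would force $|\vbeta-\vgamma|^{2}=\sum_{j}\langle\vbeta-\vgamma,e_{j}\rangle^{2}\le6\delta_{2}^{2}|\vbeta-\vgamma|^{2}$, which is impossible because $\delta_{2}=\tfrac{\sqrt6}{3}\cos(\tfrac{\pi}{3}+\theta_{0}+\theta_{1})<\tfrac{\sqrt6}{3}\cos\tfrac{\pi}{3}=\tfrac{1}{\sqrt6}$. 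I then split $W_{\vgamma,1/|I|}=W^{\mathrm{c}}\cup W^{\mathrm{f}}$ by scale, with $W^{\mathrm{c}}:=\{\vbeta:d_{\Gamma}(\vbeta)\lesssim|I|^{-1}\}$ — which by \eqref{defwgt} lies in a bounded annulus about $\vgamma$ and has $\mu(W^{\mathrm{c}})\lesssim1$ — and $W^{\mathrm{f}}$ the rest, and on each of $W^{\mathrm{c}},W^{\mathrm{f}}$ further into the sub-regions determined by which (at most one) of the $U^{m}_{\vgamma}$ contains $\vbeta$.

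On $W^{\mathrm{c}}$, bound the two factors of index $j\neq i$ by $\|F_{j}f_{j}\|_{L^{\infty}}\le\|F_{j}f_{j}\|_{S^{j}}$, integrate out the two free coordinates of $\valpha$ (which via ${\Proj}_{V}$ and $\|K(\cdot,\vbeta)\|_{L^{1}(V)}\lesssim1$ costs only a bounded factor), and control the remaining $\int_{I\times W^{\mathrm{c}}}|F_{i}f_{i}|\,d\nu$ by Cauchy--Schwarz, using $\nu(I\times W^{\mathrm{c}})\lesssim|I|$ together with $\|F_{i}f_{i}\|_{L^{2}_{\nu}(I\times(W_{\vgamma,1/|I|}\setminus U^{i}_{\vgamma}))}\le|I|^{1/2}\|F_{i}f_{i}\|_{S^{i}}$ from \eqref{sizedef}, and on the part inside $U^{i}_{\vgamma}$ the $L^{\infty}$-size times $\nu\lesssim|I|$. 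On $W^{\mathrm{f}}$ a measure-times-$L^{\infty}$ estimate is hopeless, since $\mu(W^{\mathrm{f}})=\infty$; the point is to put an $L^{2}$-type size on \emph{two} factors instead of one. On the sub-region where $\vbeta\notin U^{1}_{\vgamma}\cup U^{2}_{\vgamma}$, estimate $F_{3}f_{3}$ in $L^{\infty}$ by $\|F_{3}f_{3}\|_{S^{3}}$; integrate $K$ against $|F_{1}f_{1}(\xi_{1}+t,\vbeta)F_{2}f_{2}(\xi_{2}+t,\vbeta)|$ over the bulk annulus $\{|\vxi|\lesssim d_{\Gamma}(\vbeta)^{-1}\}$ of $K$, on which $|\xi_{m}-\xi_{i}|\lesssim d_{\Gamma}(\vbeta)^{-1}\lesssim|I|$ so the translated $t$-intervals lie in a fixed dilate $CI$ of $I$; then Cauchy--Schwarz in $t$ and in $\vbeta$ against $d\mu$ over the whole fine sub-region at once, using $W_{\vgamma,1/|I|}\setminus U^{m}_{\vgamma}\subseteq W_{\vgamma,1/|CI|}\setminus U^{m}_{\vgamma}$ and \eqref{sizedef}, yields $\lesssim|I|\,\|F_{1}f_{1}\|_{S^{1}}\|F_{2}f_{2}\|_{S^{2}}\|F_{3}f_{3}\|_{S^{3}}$; the $O(1)$ sub-regions are summed. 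Using two $L^{2}$-type sizes rather than one is precisely what lets the single $\vbeta$-integration absorb the infinite-measure part $W^{\mathrm{f}}$ without a divergent sum over dyadic frequency scales.

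The main obstacle, and where the sharp $s$ is earned, is the contribution of the far annuli $\{|\vxi|\gg d_{\Gamma}(\vbeta)^{-1}\}$ of $K$: there the shifts $\xi_{m}-\xi_{i}$ are of order $|\vxi|$, the translated $t$-intervals $I+(\xi_{m}-\xi_{i})$ escape every fixed dilate of $I$, and $\int_{I+(\xi_{m}-\xi_{i})}|F_{m}f_{m}(u,\vbeta)|^{2}\,du$ must be controlled by the size $S^{m}$ over a tent whose time-interval has length $\sim|I|+|\vxi|$. I expect the bookkeeping that makes this close — balancing the annular mass $\lesssim2^{\ell(1-s)}$ against the $\sim(|I|+|\vxi|)$ growth of the enlarged intervals, organized by a dyadic decomposition of $W^{\mathrm{f}}$ in $d_{\Gamma}(\vbeta)$ and kept compatible with the cone geometry, hence with the choices of $\delta_{1},\delta_{2},\varepsilon$ and hypothesis \eqref{stay_in_one_cond} — to be the technical heart of the proof; it is exactly the ``small scale / large scale'' split alluded to in the overview, with the coarse piece handled by the crude $\|K(\cdot,\vbeta)\|_{L^{2}(V)}\lesssim d_{\Gamma}(\vbeta)$ bound and the fine piece requiring the full strength of $s>1$.
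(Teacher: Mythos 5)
Your proposal sets up the right objects (the $L^1/L^2$ consequences of \eqref{kernelcond}, the change of variables along $(1,1,1)$, the fact that $\vbeta$ avoids at least two of the cones $U^m_{\vgamma}$, and the need to place $L^2$-type sizes on two factors where $\mu$ is infinite), and the treatment of $W^{\mathrm{c}}$ and of the bulk annulus of $K$ on $W^{\mathrm{f}}$ is sound. But the argument is not complete: the contribution of the far annuli $\{|\vxi|\sim 2^{\ell}d_{\Gamma}(\vbeta)^{-1}\}$ with $2^{\ell}\gg d_{\Gamma}(\vbeta)|I|$ --- which you yourself flag as ``the main obstacle'' and ``the technical heart'' --- is never actually estimated. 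This is precisely the case where the translated intervals $I+(\xi_m-\xi_i)$ escape every fixed dilate of $I$, so the $L^2$-size argument you run on the bulk annulus breaks down, and no mechanism in the proposal closes it. Since this is the only place where the sharp hypothesis $s>1$ is genuinely earned, the gap is essential rather than cosmetic.

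The paper's resolution inverts your order of decomposition: it first decomposes the \emph{kernel} into annuli $E_k=\{|d_{\Gamma}(\vbeta)\Proj_V\valpha|\sim 2^k\}$, and only then, for each fixed $k$, splits the $\vbeta$-region as $W_{\vgamma,1/|I|}=(W_{\vgamma,1/|I|}\setminus W_{\vgamma,2^k/|I|})\cup W_{\vgamma,2^k/|I|}$. The point is that for fixed $k$ the ``bad'' set where the shift $2^{k+1}/d_{\Gamma}(\vbeta)$ exceeds a fixed multiple of $|I|$ is exactly $W_{\vgamma,1/|I|}\setminus W_{\vgamma,2^k/|I|}$, and this set has $\mu$-measure $\lesssim k$ (a logarithmic count of Whitney scales); a pure $L^{\infty}$ bound there costs only $k\cdot 2^{k(1-s)}$, which is still summable in $k$ for $s>1$. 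On the complementary piece $W_{\vgamma,2^k/|I|}$ one has $d_{\Gamma}(\vbeta)\geq \delta_1 2^k/|I|$, so the shift is $\leq 2\delta_1^{-1}|I|$ and the translated intervals stay inside $CI$ with $C=1+4\delta_1^{-1}$, after which the two-$L^2$-sizes-plus-one-$L^{\infty}$ H\"older argument (essentially the one you describe for the bulk annulus) applies uniformly in $k$ with constant $2^{k(1-s)}$. If you reorganize your decomposition this way --- kernel annulus first, then the $k$-dependent split of $W$ --- the missing case disappears, because the region where the intervals escape is always paid for by its small $\mu$-measure rather than by any size estimate.
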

Naturally, we aim to control the right-hand side of \eqref{eq216}. On the one hand, a simple \(L^\infty\) bound can be obtained as follows.
\begin{proposition}[Global Estimate]\label{prop_Linfty_emb}
    Assume $N=1$. Given $f\in L^2\br{\R}\cap L^\infty\br{\R}$, we have
    \begin{equation}
        \nrm{F_{j}f}_{S^j}\lesssim \nrm{f}_{L^\infty}.
    \end{equation}
\end{proposition}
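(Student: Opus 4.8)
The plan is to verify the two constituents of the size $\|F_jf\|_{S^j(I,\vgamma)}$ separately, uniformly over $I\in\mathcal{I}$ and $\vgamma\in\Gamma$: the $L^\infty$ term over $I\times W_{\vgamma,1/|I|}$, and the averaged $L^2_\nu$ term over $I\times\br{W_{\vgamma,1/|I|}\setminus U^j_\vgamma}$. The $L^\infty$ term is immediate: by \eqref{definefj}, $F_jf(\cdot,\vbeta)$ is the convolution of $f$ with $\Mod_{\beta_j}\Dil^1_{d_{\Gamma}(\vbeta)^{-1}}\varphi$, whose $L^1$ norm equals $\|\varphi\|_{L^1}$; since $\widehat\varphi=\Dil^\infty_{2\varepsilon}\widetilde\eta$, the function $\varphi$ is an $L^1$-normalized dilate of the inverse transform of the fixed Schwartz bump $\widetilde\eta$, so $\|\varphi\|_{L^1}$ is an absolute constant and $\|F_jf\|_{L^\infty(\R\times V)}\le\|\varphi\|_{L^1}\|f\|_{L^\infty}\lesssim\|f\|_{L^\infty}$.

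For the $L^2_\nu$ term, fix $I,\vgamma,j$. The naive estimate — bound $|F_jf|$ pointwise by $\|f\|_{L^\infty}$ and multiply by $\mu\br{W_{\vgamma,1/|I|}\setminus U^j_\vgamma}$ — fails, because the density $d_{\Gamma}(\vbeta)^{-2}$ against $\mathcal{H}^2$ is not integrable at infinity on the cone $W_{\vgamma,t}$. Instead I would split $f=f\1_{3I}+f\1_{(3I)^c}=:f_1+f_2$. For the far piece $f_2$: for $\alpha\in I$ and $y\notin3I$ one has $|\alpha-y|\ge|I|$, and on $W_{\vgamma,1/|I|}$ one has $\delta_1|\vbeta-\vgamma|\le d_{\Gamma}(\vbeta)\le|\vbeta-\vgamma|$, hence $d_{\Gamma}(\vbeta)\sim|\vbeta-\vgamma|\ge1/|I|$ and $d_{\Gamma}(\vbeta)|\alpha-y|\ge\delta_1$; since $\varphi$ is Schwartz, the usual kernel-decay estimate gives $|F_jf_2(\alpha,\vbeta)|\lesssim_M\|f\|_{L^\infty}\br{d_{\Gamma}(\vbeta)|I|}^{-(M-1)}$ for any $M$, and integrating this against $d_{\Gamma}(\vbeta)^{-2}d\mathcal{H}^2$ over the cone (in polar coordinates around $\vgamma$, with $M=2$) yields $\int_I\int_{W_{\vgamma,1/|I|}}|F_jf_2|^2\,d\mu\,d\alpha\lesssim|I|\,\|f\|_{L^\infty}^2$.

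The local piece $f_1$ carries the content. Enlarging the $\alpha$-integral to all of $\R$ and applying Plancherel in $\alpha$ — noting $\widehat{F_jf_1}(\xi,\vbeta)=\widehat\varphi\br{d_{\Gamma}(\vbeta)^{-1}(\xi-\beta_j)}\widehat{f_1}(\xi)$, with $|\widehat\varphi|\le1$ and $\widehat\varphi$ supported where $|{\cdot}|<\varepsilon$ — followed by Tonelli, reduces matters to the claim that for every $\xi\in\R$,
\[
\int_{\BR{\vbeta\in W_{\vgamma,1/|I|}\setminus U^j_\vgamma\,:\,|\xi-\beta_j|<\varepsilon d_{\Gamma}(\vbeta)}}\frac{d\mathcal{H}^2(\vbeta)}{d_{\Gamma}(\vbeta)^2}\lesssim1.
\]
Granting this, $\int_{W_{\vgamma,1/|I|}\setminus U^j_\vgamma}\|F_jf_1(\cdot,\vbeta)\|_{L^2_\alpha(\R)}^2\,d\mu(\vbeta)\lesssim\|\widehat{f_1}\|_{L^2}^2=\|f\1_{3I}\|_{L^2}^2\le3|I|\,\|f\|_{L^\infty}^2$; combining with the $f_2$ bound and dividing by $|I|$ gives $|I|^{-1/2}\|F_jf\|_{L^2_\nu(I\times(W_{\vgamma,1/|I|}\setminus U^j_\vgamma))}\lesssim\|f\|_{L^\infty}$, and together with the $L^\infty$ bound the proposition follows after taking the supremum over $I,\vgamma$.

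To prove the displayed measure estimate, fix $\xi$ and put $R:=|\xi-\gamma_j|$. On $W_{\vgamma,t}$ one has $d_{\Gamma}(\vbeta)\sim|\vbeta-\vgamma|$. The constraint $|\xi-\beta_j|<\varepsilon d_{\Gamma}(\vbeta)\le\varepsilon|\vbeta-\vgamma|$ together with $|\beta_j-\gamma_j|\le|\vbeta-\vgamma|$ yields the lower bound $|\vbeta-\vgamma|\gtrsim R$; the same constraint together with the exclusion $\vbeta\notin U^j_\vgamma$, i.e. $|\beta_j-\gamma_j|>\delta_2|\vbeta-\vgamma|$, yields — for $\varepsilon$ small relative to $\delta_2$, which is admissible since $\varepsilon$ is to be chosen depending only on $\theta_0$ — the upper bound $|\vbeta-\vgamma|\lesssim R$. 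Thus the domain of integration sits inside an annulus $\BR{\vbeta\in V:|\vbeta-\vgamma|\sim R}$ of $\mathcal{H}^2$-measure $\lesssim R^2$, on which $d_{\Gamma}(\vbeta)^{-2}\sim R^{-2}$, so the integral is $\lesssim1$. The main obstacle — and precisely the reason $U^j_\vgamma$ is excised in the definition of $S^j$ — is the \emph{upper} bound $|\vbeta-\vgamma|\lesssim R$: without removing $U^j_\vgamma$, the coordinate $\beta_j$ can stay within $\varepsilon d_{\Gamma}(\vbeta)$ of $\gamma_j$ while $|\vbeta-\vgamma|\to\infty$, and the weighted integral (equivalently, the naive pointwise bound) diverges. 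The remaining steps are routine; the only care needed is that all implied constants depend on $\theta_0$ alone, through $\delta_1,\delta_2,\varepsilon$.
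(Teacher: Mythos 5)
Your proposal is correct and follows essentially the same route as the paper: the same split into the $L^\infty$ and averaged $L^2$ components, the same decomposition $f=1_{3I}f+1_{3I^c}f$, Plancherel in $\alpha$ for the local piece reducing to the annulus measure estimate $\{\vbeta\in W_{\vgamma,1/|I|}\setminus U^j_\vgamma:|\xi-\beta_j|\lesssim\varepsilon d_\Gamma(\vbeta)\}\subseteq\{|\vbeta-\vgamma|\sim|\xi-\gamma_j|\}$, and kernel decay for the tail. The only cosmetic difference is that the paper handles the tail via $L^2\le (L^1)^{1/2}(L^\infty)^{1/2}$ and an $L^1$ bound, whereas you bound $|F_j(1_{3I^c}f)|$ pointwise and square-integrate directly; both are the same computation.
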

On the other hand, we aim to obtain a certain \(L^2\) estimate to serve as the other endpoint and perform an interpolation argument. At this stage, the main difficulty of proving Proposition \ref{boundmodelform} is to be efficient in summing all the pieces on the left-hand side of \eqref{eq216} in Proposition \ref{propTentest}. Therefore, we must derive certain orthogonality among objects associated with tents. To address the orthogonality issue, we build up Proposition \ref{proptentgeo} to treat the distribution and the interaction among Whitney balls associated with a Lipschitz singularity.
  
% To deal with the issue how Whitney balls corresponding to a Lipschitz singularity distribute and interact with each other in this continuous model, we build up certain geometry in Proposition \ref{proptentgeo}.

\begin{proposition}[Geometry of Tents]\label{proptentgeo} Assume $N=1$.
Let $1\le j\le 3$.
We define 
\begin{equation}\label{rhorange}
    \rho:=\frac{\delta_{2}-\delta_1}{1+\delta_1}.
\end{equation}
Let $\vgamma,\vgamma'$ be two distinct points on $\Gamma$ and $t>0$.

\noindent (1)
Let $ \vgamma''$ be another point on $\Gamma$ satisfying
\begin{equation}\label{meshcond}
    \vgamma_j\leq {\vgamma''}_j\leq {\vgamma'}_j\leq  \vgamma_j+\delta_{0}(1-\delta_1) t.
\end{equation}
Then
\begin{equation}
W_{\vgamma'',t}\subseteq W_{\vgamma,\delta_1 t}\cup W_{\vgamma',\delta_1 t}.
\end{equation}
(2)
Given two points
\begin{equation}\label{eq221}
\vbeta \in W_{\vgamma,t}\setminus U_{\vgamma}^{j},\quad \vbeta' \in W_{\vgamma',0}\setminus W_{\vgamma ,\delta_1 t}
\end{equation} with $\beta_{j}<\vgamma_{j}<{\vgamma'}_{j}$, then 
\begin{equation}\label{eq222}
    B_{{ \rho }d_{\Gamma}(\vbeta)}(\beta_{j})\cap  B_{ \rho d_{\Gamma}(\vbeta^{'})}(\beta_{j}^{'}
    )=\varnothing.
\end{equation}
\end{proposition}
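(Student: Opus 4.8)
The plan is to prove the two parts separately, both relying on the Lipschitz-type lower bound from Lemma~\ref{lemawayfromortho}, that is, $|\langle \vgamma-\vgamma', e_j\rangle|\ge \delta_0|\vgamma-\vgamma'|$ for all $\vgamma,\vgamma'\in\Gamma$, together with the elementary fact that $d_\Gamma$ is $1$-Lipschitz. Recall that membership $\vbeta\in W_{\vgamma,t}$ means $t\le|\vbeta-\vgamma|\le\delta_1^{-1}d_\Gamma(\vbeta)$, so being in a tent centered at $\vgamma$ says $\vbeta$ is not too close to $\vgamma$ relative to scale $t$, but also not too far relative to its own distance to $\Gamma$.

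For part (1), I would start from a point $\vbeta\in W_{\vgamma'',t}$, so $|\vbeta-\vgamma''|\ge t$ and $|\vbeta-\vgamma''|\le \delta_1^{-1}d_\Gamma(\vbeta)$. The goal is to show $\vbeta$ lies in $W_{\vgamma,\delta_1 t}$ or $W_{\vgamma',\delta_1 t}$; since the upper constraint $|\vbeta-\vgamma|\le\delta_1^{-1}d_\Gamma(\vbeta)$ and its primed analogue hold automatically because $d_\Gamma(\vbeta)\le|\vbeta-\vgamma|$ and... no, that's the wrong direction — the upper bound is the nontrivial one, so I must instead observe that $d_\Gamma(\vbeta)$ can only be realized by some point of $\Gamma$, and using \eqref{meshcond} together with the ordering of the $j$-th coordinates, the triangle inequality gives $|\vbeta-\vgamma|\le|\vbeta-\vgamma''|+|\vgamma''-\vgamma|$ and one bounds $|\vgamma''-\vgamma|$ via the $j$-coordinate gap $\le \delta_0(1-\delta_1)t$ divided by $\delta_0$, i.e. $|\vgamma''-\vgamma|\le(1-\delta_1)t$. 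Then either $|\vbeta-\vgamma|\ge\delta_1 t$ (done, $\vbeta\in W_{\vgamma,\delta_1 t}$ after checking the easy second inequality) or $|\vbeta-\vgamma|<\delta_1 t$, in which case $|\vbeta-\vgamma''|\le|\vbeta-\vgamma|+|\vgamma-\vgamma''|<\delta_1 t+(1-\delta_1)t=t$, contradicting $\vbeta\in W_{\vgamma'',t}$ unless equality, which one handles by the same casework applied to $\vgamma'$; more carefully, one splits on whether $|\vbeta-\vgamma|<\delta_1 t$ and shows that then $\vbeta\in W_{\vgamma',\delta_1 t}$ by a symmetric computation using ${\vgamma'}_j-\vgamma''_j$ and the closeness of $\vbeta$ to $\vgamma$. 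The upper-bound inequalities $|\vbeta-\vgamma|\le\delta_1^{-1}d_\Gamma(\vbeta)$ reduce to $|\vbeta-\vgamma''|\le\delta_1^{-1}d_\Gamma(\vbeta)$ plus the small perturbation $|\vgamma-\vgamma''|$, which I expect requires a slightly more careful bookkeeping of constants — this is the main technical point of part (1).

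For part (2), the aim is disjointness of the balls $B_{\rho d_\Gamma(\vbeta)}(\beta_j)$ and $B_{\rho d_\Gamma(\vbeta')}(\beta'_j)$ in $\R$, so it suffices to show $|\beta_j-\beta'_j|>\rho(d_\Gamma(\vbeta)+d_\Gamma(\vbeta'))$. The hypothesis $\beta_j<\vgamma_j<{\vgamma'}_j$ together with $\vbeta\notin U_\vgamma^j$ (meaning $|\langle\vbeta-\vgamma,e_j\rangle|>\delta_2|\vbeta-\vgamma|$, so in particular $\vgamma_j-\beta_j>\delta_2|\vbeta-\vgamma|\ge\delta_2 d_\Gamma(\vbeta)$ after also using $|\vbeta-\vgamma|\ge d_\Gamma(\vbeta)$, wait one needs $|\vbeta-\vgamma|\ge d_\Gamma(\vbeta)$ which is immediate) gives a lower bound on how far $\beta_j$ is to the left of $\vgamma_j$. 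On the other side, $\vbeta'\notin W_{\vgamma,\delta_1 t}$ combined with $\vbeta'\in W_{\vgamma',0}$ — actually the key is that $\vbeta'$ being outside $W_{\vgamma,\delta_1 t}$ forces either $|\vbeta'-\vgamma|<\delta_1 t$ or $|\vbeta'-\vgamma|>\delta_1^{-1}d_\Gamma(\vbeta')$; I would argue that the relevant regime yields $\vgamma_j-\beta'_j$ is controlled, i.e. $\beta'_j$ cannot be too far left of $\vgamma_j$, specifically $|\beta'_j-\vgamma_j|\le |\vbeta'-\vgamma|$ and one needs this $\le \delta_1\cdot(\text{something})$. Then $|\beta_j-\beta'_j|=(\vgamma_j-\beta_j)-(\vgamma_j-\beta'_j)$ or $(\vgamma_j-\beta_j)+(\beta'_j-\vgamma_j)$ depending on the sign, and in either case combining the lower bound $\vgamma_j-\beta_j\ge\delta_2 d_\Gamma(\vbeta)$ with an upper bound $|\vgamma_j-\beta'_j|\le\delta_1 d_\Gamma(\vbeta')+(\text{comparable to }d_\Gamma(\vbeta))$ and using the definition $\rho=\frac{\delta_2-\delta_1}{1+\delta_1}$ produces the claimed strict separation. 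The arithmetic identity matching $\rho$ is exactly engineered so that $\delta_2 - \rho \ge \rho$ relative to the perturbation, i.e. $\delta_2 d_\Gamma(\vbeta) - \rho d_\Gamma(\vbeta) \ge \rho d_\Gamma(\vbeta')$-type estimates close up; I'd verify that by substituting.

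The main obstacle I anticipate is part (2): one must carefully track which of the two failure modes of $\vbeta'\in W_{\vgamma',0}\setminus W_{\vgamma,\delta_1 t}$ is operative — the condition "$|\vbeta'-\vgamma|<\delta_1 t$" versus "$|\vbeta'-\vgamma|$ too large relative to $d_\Gamma(\vbeta')$" — and relate $d_\Gamma(\vbeta)$, $d_\Gamma(\vbeta')$, $t$, and the coordinate gaps using only the $1$-Lipschitz property of $d_\Gamma$ and the cone bound $|\langle\cdot,e_j\rangle|\ge\delta_0|\cdot|$ on differences in $\Gamma$. The inequalities $\sin\theta_1<\frac{\sqrt6}{3}\cos(\tfrac\pi3+\theta_0+\theta_1)$, i.e. $\delta_1<\delta_2$, and $\delta_2<1$ are what make $\rho\in(0,1)$ and must be invoked at the right moment. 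Everything else is triangle-inequality bookkeeping which I would organize into a short chain of inequalities rather than spelling out each constant manipulation.
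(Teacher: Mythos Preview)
Your overall strategy---reduce part~(2) to a lower bound on $|\beta_j-\beta'_j|$, and for part~(1) check separately the lower and upper inequalities defining $W_{\vgamma,\delta_1 t}$---matches the paper. The lower bound in part~(1) and the ``close'' case $|\vbeta'-\vgamma|<\delta_1 t$ in part~(2) go through essentially as you sketch. But there is a genuine missing ingredient in both parts, and it is not merely ``bookkeeping of constants.''

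In part~(1), your route to the upper bound $|\vbeta-\vgamma|\le\delta_1^{-1}d_\Gamma(\vbeta)$ is to write $|\vbeta-\vgamma|\le|\vbeta-\vgamma''|+|\vgamma''-\vgamma|\le\delta_1^{-1}d_\Gamma(\vbeta)+(1-\delta_1)t$. This is strictly weaker than what is needed: the extra $(1-\delta_1)t$ does not vanish and cannot be absorbed, since $d_\Gamma(\vbeta)$ may be as small as $\delta_1 t$. The paper does \emph{not} go through $\vgamma''$; it picks the nearest point $\vgamma(\vbeta)\in\Gamma$ and splits according to whether $\gamma(\vbeta)_j$ lies in $[\gamma_j,\gamma'_j]$ or outside. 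When it lies outside, say $\gamma(\vbeta)_j<\gamma_j\le\gamma''_j$, one needs a monotonicity statement: if $\delta_1|\vbeta-\vgamma''|\le|\vbeta-\vgamma(\vbeta)|$ then also $\delta_1|\vbeta-\vgamma|\le|\vbeta-\vgamma(\vbeta)|$. This is the Apollonian-circle inclusion $B_{\delta_1}(\vgamma(\vbeta),\vgamma)\subseteq B_{\delta_1}(\vgamma(\vbeta),\vgamma'')$, proved in the paper as Lemma~\ref{appolem} together with an ordering lemma (Lemma~\ref{pres_order_appo}) that verifies the hypothesis $\delta_1|\vgamma''-\vgamma|\le|\vgamma''-\vgamma(\vbeta)|-|\vgamma-\vgamma(\vbeta)|$ using the cone condition on $\Gamma$. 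Nothing like this appears in your plan.

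In part~(2), you correctly identify the dichotomy $|\vbeta'-\vgamma|<\delta_1 t$ versus $\delta_1|\vbeta'-\vgamma|>d_\Gamma(\vbeta')$, but you do not handle the second branch. The paper packages this as Lemma~\ref{rhslemma}: under $\vbeta'\in W_{\vgamma',0}\setminus W_{\vgamma,\delta_1 t}$, either $\vbeta'\in B_{\delta_1 t}(\vgamma)$ or $\beta'_j>\gamma_j$ with $\vbeta'\notin U_\vgamma^j$. The proof of that lemma again uses the Apollonian inclusion (to show that if the nearest point $\vgamma(\vbeta')$ has $\gamma(\vbeta')_j\le\gamma_j$ then $\vbeta'\in W_{\vgamma,0}$) together with $U_\vgamma^j\subseteq W_{\vgamma,0}$ (Lemma~\ref{UinW}). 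Once $\beta'_j>\gamma_j>\beta_j$ with both $\vbeta,\vbeta'\notin U_\vgamma^j$, the separation $|\beta_j-\beta'_j|\ge\delta_2(|\vbeta-\vgamma|+|\vbeta'-\vgamma|)\ge\rho(d_\Gamma(\vbeta)+d_\Gamma(\vbeta'))$ is immediate. Your sentence ``I would argue that the relevant regime yields $\gamma_j-\beta'_j$ is controlled'' is precisely where this machinery is needed and is not supplied by the $1$-Lipschitz property of $d_\Gamma$ or the cone bound alone.
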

\begin{figure}[h]
    \centering
\includegraphics[width=0.6\textwidth,height=8cm]{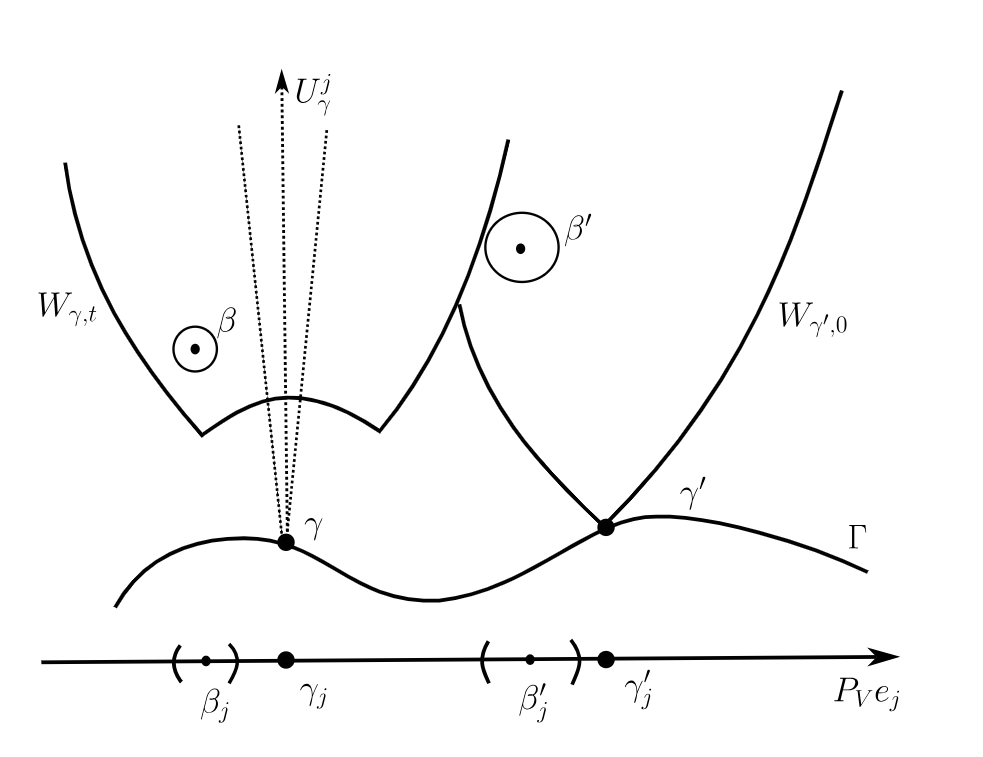}
    \caption{In \eqref{eq221} and \eqref{eq222}, we describe certain orthogonality.}
    \label{fig:enter-label}
\end{figure}
The geometry in Proposition \ref{proptentgeo} serves as a base for the two algorithms introduced in Proposition \ref{prop_select} and \ref{prop_selectL2}. Essentially, both algorithms extract collections of countable tents with desired geometric properties from collections of uncountable tents.
\begin{proposition}[Selection Algorithm, $L^{\infty}$ Component]\label{prop_select}
    Let \(\Omega\subseteq \R\times \br{V\setminus\Gamma}\) be compact and \(\lambda>0\) be a threshold. For \(j=1,2,3\) and $f\in L^2\br{\R}\cap L^\infty\br{\R}$, we have the following:
    There is a countable collection of tents $\mathbf{T}$ and a countable collection of points $\P$ in $\Omega$ that satisfy the following properties:
    \begin{itemize}
        \item Covering: 
        \begin{equation}\label{select1-1}
             \P\subseteq \Omega\cap \left\vert F_{j}f\right\vert^{-1}(\lambda ,2\lambda ]\subseteq \bigcup_{T=(I,\vgamma)\in \mathbf{T}}D_{T}.
        \end{equation}
        
        \item Estimate:
        \begin{equation}\label{select1-2}
            \sum_{T=(I,\vgamma)\in \mathbf{T}}\abs{I}
            \sim
            \sum_{(\alpha,\vbeta) \in \P}d_\Gamma\br{\vbeta}^{-1}
            \lesssim
            \sum_{(\alpha,\vbeta) \in \P}
            \frac{\abs{F_j f\br{\alpha,\vbeta}}^2}{d_\Gamma\br{\vbeta}\lambda^2}.
        \end{equation}
        
        \item Orthogonality: For distinct \(\br{\alpha,\vbeta},\br{\alpha',\vbeta'}\in  \P\), at least one of the following statements holds.
        \begin{equation}\label{select1-31}
             \abs{\alpha-\alpha'}\geq  2 \br{d_\Gamma\br{\vbeta}^{-1}+d_\Gamma\br{\vbeta'}^{-1}}.
        \end{equation}
        \begin{equation}\label{select1-32}
             \abs{\beta_j-\beta_j'}\geq  \rho  \br{d_\Gamma\br{\vbeta}+d_\Gamma\br{\vbeta'}}.
        \end{equation}
    \end{itemize}
% If further that \(\Omega\subset \abs{F_jf}^{-1}\Br{0,\lambda}\), we have the following lemma:
\end{proposition}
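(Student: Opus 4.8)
The plan is to run a greedy stopping-time selection stratified by the dyadic size of $d_\Gamma(\vbeta)$ and processed from the smallest size to the largest, in the spirit of the tent selections in \cite{do2015lp}. Since $\Omega$ is compact and disjoint from the closed set $\Gamma$, there are constants $0<c_1\le c_2<\infty$ with $d_\Gamma(\vbeta)\in[c_1,c_2]$ and $\abs{\alpha}\le c_2$ for all $(\alpha,\vbeta)\in\Omega$, so that only the finitely many dyadic numbers $2^k\in[c_1,2c_2]$ are relevant; by continuity of $F_jf$ the level set $E:=\Omega\cap\abs{F_jf}^{-1}(\lambda,2\lambda]$ lies inside this fixed bounded set, on which all sizes $d_\Gamma(\vbeta)$ are comparable to $1$ (if $E=\varnothing$, take $\P=\mathbf T=\varnothing$). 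To each $q=(\alpha,\vbeta)\in E$ I would attach a tent $T(q)=(I(q),\vgamma(q))$: here $\vgamma(q)\in\Gamma$ is a point with $d_\Gamma(\vbeta)\le\abs{\vbeta-\vgamma(q)}\le\delta_1^{-1}d_\Gamma(\vbeta)$ and $\vbeta\notin U^j_{\vgamma(q)}$ — such a near-point exists because, $\Gamma$ being an essentially $e_j$-directed Lipschitz graph by the hypothesis on its difference vectors, $\vbeta$ is seen in a direction transverse to $e_j^{\perp}$ from some near-point, as one checks from the relations among $\delta_0,\delta_1,\delta_2$ and $\theta_0<\frac{\pi}{6}$, in the spirit of Lemma \ref{lemawayfromortho} — and $I(q)$ is the interval of length $C_0\,d_\Gamma(\vbeta)^{-1}$ centred at $\alpha$, with $C_0$ a large absolute constant, so that in particular $q\in D_{T(q)}$.

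The selection would run over the relevant dyadic numbers $k$ in increasing order. Having chosen $\P_1,\dots,\P_{k-1}$, among the points of $E$ with $d_\Gamma(\vbeta)\in[2^k,2^{k+1})$ not yet discarded I would pick a subfamily $\P_k$ maximal subject to the requirement that $\P_1\cup\dots\cup\P_k$ be \emph{separated}, meaning that any two of its points satisfy \eqref{select1-31} or \eqref{select1-32}. Such a maximal family is finite, since on the bounded set where all sizes are $\sim1$ separation forces the images of the points under $(\alpha,\vbeta)\mapsto(\alpha,\beta_j)$ to be $\gtrsim1$-separated in at least one of the two coordinates, and this image map is injective on any separated family. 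I would then add $\P_k$ to $\P$, the tents $\{T(q):q\in\P_k\}$ to $\mathbf T$, and discard every not-yet-handled point of $E$ with $d_\Gamma(\vbeta)\in[2^k,2^{k+1})$. After the last step, $\P$ and $\mathbf T$ are the accumulated collections; $\P$ is finite, and \eqref{select1-31}--\eqref{select1-32} holds for distinct elements of $\P$ by construction.

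The estimate \eqref{select1-2} is then routine. Since $\abs{F_jf}>\lambda$ on $\P\subseteq E$, the termwise bound $d_\Gamma(\vbeta)^{-1}<\abs{F_jf(\alpha,\vbeta)}^2 d_\Gamma(\vbeta)^{-1}\lambda^{-2}$ gives the last inequality. Each $q\in\P$ contributes exactly one tent, with $\abs{I(q)}=C_0\,d_\Gamma(\vbeta)^{-1}$, so $\sum_{T\in\mathbf T}\abs I\le\sum_{q\in\P}\abs{I(q)}\sim\sum_{q\in\P}d_\Gamma(\vbeta)^{-1}$; conversely $q\mapsto T(q)$ is boundedly-to-one, because two selected points sharing a tent have equal $d_\Gamma(\vbeta)$ and the same centre, hence the same $\alpha$, so that \eqref{select1-31} fails and \eqref{select1-32} forces them to be $\gtrsim d_\Gamma(\vbeta)$-separated in $\beta_j$; as they all lie within $\delta_1^{-1}d_\Gamma(\vbeta)$ of $\vgamma(q)_j$ in that coordinate, only $O(1)$ of them occur, whence $\sum_{q\in\P}d_\Gamma(\vbeta)^{-1}\lesssim\sum_{T\in\mathbf T}\abs I$ as well.

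The heart of the matter, and the step I expect to be the main obstacle, is the covering \eqref{select1-1}: that every $q''=(\alpha'',\vbeta'')\in E$ lies in $D_T$ for some $T\in\mathbf T$. Such a $q''$ is handled at the step $k$ with $2^k\le d_\Gamma(\vbeta'')<2^{k+1}$ (it cannot have been discarded earlier, since step $k'$ discards only points of size in $[2^{k'},2^{k'+1})$). Either $q''\in\P_k$, and then $q''\in D_{T(q'')}$ by construction; or, by maximality of $\P_k$, $q''$ is non-separated from some $q=(\alpha',\vbeta')\in\P_1\cup\dots\cup\P_k$, and the task is to show $q''\in D_{T(q)}$. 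Here $q$ has size $d_\Gamma(\vbeta')<2^{k+1}\le 2 d_\Gamma(\vbeta'')$ — this is exactly why processing from small to large is essential, as it rules out a selected point of much larger size, whose small tent could not reach $\alpha''$. The containment $\alpha''\in I(q)$ then follows from the failure of \eqref{select1-31} once $C_0$ is large, and the lower bound $1/\abs{I(q)}\le\abs{\vbeta''-\vgamma(q)}$ in the definition of $W_{\vgamma(q),1/\abs{I(q)}}$ from $\abs{\vbeta''-\vgamma(q)}\ge d_\Gamma(\vbeta'')\gtrsim d_\Gamma(\vbeta')\sim 1/\abs{I(q)}$. What requires genuine work is the upper bound $\abs{\vbeta''-\vgamma(q)}\le\delta_1^{-1}d_\Gamma(\vbeta'')$: the failure of \eqref{select1-32} only says that $\abs{\beta_j''-\beta_j'}$ is small on the scale $d_\Gamma(\vbeta'')$, and this has to be parlayed — using that $\vbeta''\notin U^j_{\vgamma(q'')}$ and $\vbeta'\notin U^j_{\vgamma(q)}$, so that $\vbeta''-\vgamma(q'')$ and $\vbeta'-\vgamma(q)$ are nearly parallel to $P_V e_j$, that $\Gamma$ is $e_j$-directed in the sense of \eqref{singcond}, and that part (1) of Proposition \ref{proptentgeo} absorbs the tent over an intermediate singular point into flanking tents — into the statement that $\vgamma(q)$, or a nearby selected singular point after replacing $T(q)$ by an $O(1)$-sized family of companion tents, is close enough to $\vbeta''$. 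Making every enlargement constant in this chain stay below $\delta_1^{-1}$ is precisely what the choices $\theta_1=\frac{\pi}{18}-\frac{\theta_0}{3}$, $\delta_1=\sin\theta_1$, $\delta_2=\frac{\sqrt{6}}{3}\cos(\frac{\pi}{3}+\theta_0+\theta_1)$, $\rho$ as in \eqref{rhorange}, and $\theta_0<\frac{\pi}{6}$ are there to guarantee, and carrying it out is where the difficulty lies.
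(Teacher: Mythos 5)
Your overall architecture matches the paper's: stratify by the dyadic size of $d_\Gamma(\vbeta)$ (nonzero by compactness), process strata from small to large, make a maximal/greedy selection within each stratum, read off the estimate \eqref{select1-2} from $\abs{F_jf}>\lambda$ on $\P$, and invoke part (1) of Proposition \ref{proptentgeo} for the covering. But the proof is not complete: you explicitly defer the covering step \eqref{select1-1}, writing that ``carrying it out is where the difficulty lies,'' and that step is precisely the substance of the proposition. A single tent $T(q)=(I(q),\vgamma(q))$ per selected point cannot work. If $q''=(\alpha'',\vbeta'')$ fails to be separated from $q=(\alpha',\vbeta')$, the failure of \eqref{select1-32} controls only the $j$-th coordinate $\abs{\beta''_j-\beta'_j}$, and the resulting bound on $\abs{\vbeta''-\vgamma(q)}$ (obtained by passing through $\vgamma(\vbeta'')$ and the Lipschitz condition \eqref{singcond}) is a constant multiple of $d_\Gamma(\vbeta'')$ that is strictly larger than $\delta_1^{-1}$ — already the term $\abs{\beta'_j-\gamma(q)_j}\leq \delta_1^{-1}d_\Gamma(\vbeta')$ can be as large as $2\delta_1^{-1}d_\Gamma(\vbeta'')$, so $\vbeta''\in W_{\vgamma(q),1/\abs{I(q)}}$ fails. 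This is why the paper attaches to each selected point not one tent but the family $\br{\alpha+\tfrac{c}{2^{k_0}t_0}\Br{-\tfrac12,\tfrac12},\vgamma^{(i)}_{\pm}(\vbeta)}$, $-M\leq i\leq M$, anchored at the \emph{extremal} points of $\Gamma$ in slabs of $e_j$-width exactly $\delta_0(1-\delta_1)2^{k_0}t_0$: only then does Proposition \ref{proptentgeo}(1), whose hypothesis \eqref{meshcond} has that exact threshold, give $W_{\vgamma(\vbeta''),t}\subseteq W_{\vgamma^{(i)}_-,\delta_1 t}\cup W_{\vgamma^{(i)}_+,\delta_1 t}$ and hence the covering. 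You name this mechanism but do not construct the companion tents, choose $M$, or verify the constants, so the covering property remains unproven.

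Two further points. First, your claim that every $\vbeta$ admits a near-point $\vgamma(q)\in\Gamma$ with $d_\Gamma(\vbeta)\leq\abs{\vbeta-\vgamma(q)}\leq\delta_1^{-1}d_\Gamma(\vbeta)$ \emph{and} $\vbeta\notin U^j_{\vgamma(q)}$ is false in general (take $\Gamma$ a single point and $\vbeta$ in its cone $U^j$); the paper avoids this by building everything from the genuine nearest point $\vgamma(\vbeta)$ and its $j$-th coordinate. Second, your selection criterion (maximality with respect to the separation conditions \eqref{select1-31}--\eqref{select1-32} themselves) differs from the paper's (disjointness of the rectangles $R_{\alpha,\vbeta,t}$, which are phrased in terms of $\gamma(\vbeta)_j$ rather than $\beta_j$). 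Your version makes orthogonality tautological but shifts all the work onto the covering, where one must convert closeness of $\beta_j$-coordinates into closeness of $\gamma(\vbeta)_j$-coordinates at a further loss of constants; the paper's rectangle-based selection is engineered so that non-selection directly localizes $\gamma(\vbeta'')_j$ in one of the slabs $\Gamma^i_{\vbeta'}$, which is what feeds Proposition \ref{proptentgeo}(1). Until the covering chain is carried out with explicit constants compatible with $\delta_0,\delta_1,\rho$, the argument has a genuine gap.
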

To state the next proposition, we introduce two auxiliary sets.
Given \(\vgamma\in \Gamma\) and \(t\geq 0\), we define
\begin{equation}
    \left(W_{\vgamma,t}\setminus U_{\vgamma}^{j}\right)^{<j}:=\{\vbeta \in V: \beta_{j}<\gamma_{j}\}\cap \left( W_{\vgamma,t}\setminus U_\vgamma^j \right)
\end{equation}
and
\begin{equation}
    \left(W_{\vgamma,t}\setminus U_{\vgamma}^{j}\right)^{>j}:=\{\vbeta \in V: \beta_{j}>\gamma_{j}\}\cap \left( W_{\vgamma,t}\setminus U_\vgamma^j \right).
\end{equation}

\begin{proposition}[Selection Algorithm, $L^{2}$ Component]\label{prop_selectL2}
 Let \(\Omega\subseteq \R\times \br{V\setminus\Gamma}\) be compact and \(\lambda>0\) a threshold. For \(j=1,2,3\) and $f\in L^2\br{\R}\cap L^\infty\br{\R}$, we have the following:
    There is a countable collection $\mathbf{T}$ of tents and a countable collection $\mathbf{S}$ of the form \(\br{I,\vgamma, S}\) with $(I,\vgamma)$ a tent and $S$ a measurable subset of $ \Omega\cap I\times \left(W_{\vgamma,1/\abs{I}}\setminus U_{\vgamma}^{j}\right)^{<j}$  that satisfy the following properties:
    \begin{itemize}
        \item Covering: For any tent \( (I, \vgamma) \),
        \begin{equation}\label{select2-1}
            |I|^{-\frac{1}{2}}\nrm{
                1_{\Omega\setminus\bigcup_{T\in \T}D_{T}} F_jf
            }_{L^2_{\nu}\br{ I\times (W_{\vgamma,1/|I|}\setminus U_{\vgamma}^{j})^{<j}}}\leq \frac{\lambda}{\sqrt{2}}
        \end{equation}
        
        \item Estimate:
        \begin{equation}\label{select2-2}
            \sum_{(I,\vgamma )\in \T}\abs{I}
            \sim
            \sum_{\br{I,\vgamma,S}\in\S}
            \abs{I}
            \lesssim
            \sum_{\br{I,\vgamma,S}\in\S}
            \frac{\nrm{F_j f}_{L^2_{\nu}\br{S}}^2}{\lambda^2}
        \end{equation}
        In particular, for \(\br{I,\vgamma,S}\in \S\), we have \(\abs{I}^{-\frac{1}{2}}\nrm{F_j f }_{L^2_{\nu}\br{S}}\gtrsim \lambda\).
        
        \item Orthogonality: For distinct \(\br{I,\vgamma,S},\br{I',\vgamma',S'}\in \S\) with \(\gamma_j\leq \gamma_j'\), any pair of \(\br{\alpha,\vbeta}\in S\) and \(\br{\alpha',\vbeta'}\in S'\) satisfies at least one of the following.
        \begin{equation}\label{select2-31}
            \abs{\alpha-\alpha'}\geq  2|I|.
        \end{equation}
        \begin{equation}\label{select2-32}
             \abs{\beta_j-\beta_j'}\geq  \rho  \br{d_\Gamma\br{\vbeta}+d_\Gamma\br{\vbeta'}}.
        \end{equation}
    \end{itemize}
Symmetrically, the proposition holds for  $\left(W_{\vgamma,\frac{1}{\abs{I}}}\setminus U_{\vgamma}^{j}\right)^{<j}$ replaced by \\
$\left(W_{\vgamma,\frac{1}{\abs{I}}}\setminus U_{\vgamma}^{j}\right)^{>j}$.
\end{proposition}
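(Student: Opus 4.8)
The plan is to run a greedy stopping-time selection of tents, in parallel with the proof of Proposition~\ref{prop_select}: each selected tent carries a concentrically enlarged companion whose region is removed, and the two separation alternatives are then read off from Proposition~\ref{proptentgeo}(2) (for \eqref{select2-32}) and from the interval enlargement (for \eqref{select2-31}). By the reflection $\vbeta\mapsto-\vbeta$ on $V$, which sends $\Gamma$ to $-\Gamma$, preserves the double cones $\mathcal{K}_{j_{0}}(\theta_{0})$ (hence the hypothesis on $\Gamma$) and the relevant geometry, and interchanges $(W_{\vgamma,t}\setminus U_{\vgamma}^{j})^{<j}$ with $(W_{\vgamma,t}\setminus U_{\vgamma}^{j})^{>j}$, it suffices to treat the $(\cdot)^{<j}$ statement. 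Fix a large constant $c=c(\theta_{0})$, say $c:=\max\{9,2\delta_{1}^{-1}\}$, and for a tent $T=(I,\vgamma)$ write $\widehat{T}:=(\widehat{I},\vgamma)$ with $\widehat{I}\supseteq I$ the concentric dilate of $I$ by the factor $c$; then $\operatorname{dist}(I,\R\setminus\widehat{I})\geq\tfrac{c-1}{2}|I|$, and since $c\geq\delta_{1}^{-1}$ we have $D_{\widehat{T}}=\widehat{I}\times W_{\vgamma,1/(c|I|)}\supseteq\widehat{I}\times W_{\vgamma,\delta_{1}/|I|}$.

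For the selection, call $(I,\vgamma)$ \emph{bad} relative to $R\subseteq\Omega$ if $|I|^{-1/2}\|1_{R}F_{j}f\|_{L^{2}_{\nu}(I\times(W_{\vgamma,1/|I|}\setminus U_{\vgamma}^{j})^{<j})}>\lambda/\sqrt{2}$. Starting from $R_{0}:=\Omega$, at stage $k$ pick a tent $T_{k}=(I_{k},\vgamma_{k})$ bad relative to $R_{k-1}$ whose scale $|I_{k}|$ is within a factor $2$ of the supremal scale among all bad tents, and, among these, whose $(\vgamma_{k})_{j}$ is within a fixed tiny additive error of the infimal one; record the triple $(I_{k},\vgamma_{k},S_{k})$ in $\mathbf{S}$ with $S_{k}:=R_{k-1}\cap\bigl(I_{k}\times(W_{\vgamma_{k},1/|I_{k}|}\setminus U_{\vgamma_{k}}^{j})^{<j}\bigr)$, put $\widehat{T}_{k}$ into $\mathbf{T}$, and set $R_{k}:=R_{k-1}\setminus D_{\widehat{T}_{k}}$; stop when no tent is bad. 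Since $S_{k}\subseteq D_{T_{k}}\subseteq D_{\widehat{T}_{k}}$ is removed at stage $k$, the sets $S_{k}$ are pairwise disjoint, whence $\sum_{k}|I_{k}|\lesssim\lambda^{-2}\sum_{k}\|F_{j}f\|_{L^{2}_{\nu}(S_{k})}^{2}\leq\lambda^{-2}\|F_{j}f\|_{L^{2}_{\nu}(\Omega)}^{2}<\infty$, the last bound because $\nu$ is finite on the compact set $\Omega\subset\R\times(V\setminus\Gamma)$; this gives countability and, together with $\sum_{(I,\vgamma)\in\mathbf{T}}|I|=c\sum_{k}|I_{k}|$, the Estimate \eqref{select2-2}. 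If a tent remained bad relative to $R_{\infty}:=\Omega\setminus\bigcup_{T\in\mathbf{T}}D_{T}$, it would be bad relative to every $R_{k}$, hence of scale at most twice $|I_{k}|\to0$, which is impossible; this gives the Covering \eqref{select2-1}.

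The heart of the proof is the Orthogonality. Fix distinct triples $(I,\vgamma,S),(I',\vgamma',S')\in\mathbf{S}$ with $\gamma_{j}\leq\gamma_{j}'$ and points $(\alpha,\vbeta)\in S$, $(\alpha',\vbeta')\in S'$, so $\alpha\in I$, $\vbeta\in W_{\vgamma,1/|I|}\setminus U_{\vgamma}^{j}$, $\beta_{j}<\gamma_{j}$, and likewise for the primed data; by Lemma~\ref{lemawayfromortho}, either $\vgamma=\vgamma'$ (with $I\neq I'$) or $\gamma_{j}<\gamma_{j}'$. Whichever tent was selected first, say $T_{l}$, the other triple's $S$-set lies in $R_{l}$ and so is disjoint from $D_{\widehat{T}_{l}}$; hence the point of that triple either (a) has $\alpha$-coordinate outside $\widehat{I}_{l}$, or (b) has $\vbeta$-coordinate outside $W_{\vgamma_{l},1/(c|I_{l}|)}\supseteq W_{\vgamma_{l},\delta_{1}/|I_{l}|}$. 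In case (a), since the two selected scales differ by at most a factor $2$ and $c\geq9$, one obtains $|\alpha-\alpha'|\geq\tfrac{c-1}{2}|I_{l}|\geq2|I|$, which is \eqref{select2-31}. In case (b) with the smaller-$\gamma_{j}$ tent selected first, the data meet the hypotheses of Proposition~\ref{proptentgeo}(2) with $t=1/|I|$ — namely $\vbeta\in W_{\vgamma,1/|I|}\setminus U_{\vgamma}^{j}$, $\vbeta'\in W_{\vgamma',0}\setminus W_{\vgamma,\delta_{1}/|I|}$ and $\beta_{j}<\gamma_{j}<\gamma_{j}'$ — so $B_{\rho d_{\Gamma}(\vbeta)}(\beta_{j})\cap B_{\rho d_{\Gamma}(\vbeta')}(\beta_{j}')=\varnothing$ by \eqref{eq222}, which is \eqref{select2-32}. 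When $\vgamma=\vgamma'$, case (b) is vacuous since $\vbeta'\in W_{\vgamma,1/|I'|}\subseteq W_{\vgamma,1/(c|I_{l}|)}$ as $|I'|\leq c|I_{l}|$; and the small overshoot allowed for $(\vgamma_{k})_{j}$ is controlled using $|\vgamma_{k}-\vgamma_{l}|\leq\delta_{0}^{-1}|(\vgamma_{k})_{j}-(\vgamma_{l})_{j}|$ (Lemma~\ref{lemawayfromortho}) and the a priori scale bound, and is absorbed into $c$.

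I expect the orthogonality bookkeeping to be the main obstacle, in particular the configuration where the triple with the \emph{larger} $\gamma_{j}$ was selected first and also has the larger interval, so that Proposition~\ref{proptentgeo}(2) is not immediately oriented correctly. There one uses the admissibility constraint $d_{\Gamma}(\vbeta)\geq\delta_{1}|\vbeta-\vgamma|$ for $\vbeta\in W_{\vgamma,1/|I|}$, the cone condition $\vbeta\notin U_{\vgamma}^{j}$, and the choice $c\geq2\delta_{1}^{-1}$ to rule out the degenerate alternative in case (b) — the frequency $\vbeta$ collapsing onto $\vgamma'$ — and to place $\vbeta,\vbeta'$ in the regime of \eqref{eq222}. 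Once the $(\cdot)^{<j}$ statement is proved, the $(\cdot)^{>j}$ statement follows by the reflection $\vbeta\mapsto-\vbeta$ recorded at the outset.
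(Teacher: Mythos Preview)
Your greedy-by-maximal-scale selection differs from the paper's proof, and the difference is precisely where your argument breaks down. The paper does \emph{not} order the selection by scale: it first slices $\Gamma$ into strips $\Gamma^{k}=\Gamma\cap P_{j}^{-1}\bigl(-A+\delta_{0}(1-\delta_{1})t_{0}[k-1,k]\bigr)$ and processes them in increasing $k$, so that the smaller-$\gamma_{j}$ triples are always recorded and their enlarged tents removed \emph{before} any larger-$\gamma_{j}$ triple is examined. Within a fixed strip it uses a Vitali covering (giving $5I\cap 5I'=\varnothing$, hence \eqref{select2-31}); across strips with $k<k'$ it knows $S'\subseteq\Omega_{k'-1}$ has already had $\tfrac{1}{\delta_{1}}I\times W_{\vgamma,\delta_{1}/|I|}$ removed, so either $\alpha'\notin\tfrac{1}{\delta_{1}}I$ or $\vbeta'\notin W_{\vgamma,\delta_{1}/|I|}$, and the latter feeds directly into Proposition~\ref{proptentgeo}(2) with the correct orientation $\beta_{j}<\gamma_{j}<\gamma_{j}'$.

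Your algorithm permits the configuration $\gamma_{j}<\gamma_{j}'$ with $(I',\vgamma')$ selected first (this happens whenever $|I|<\tfrac{1}{2}\sup$ at the stage where $(I',\vgamma')$ is chosen). In your case~(b) you then only learn that $\vbeta\notin W_{\vgamma',1/(c|I'|)}$, and with $c\geq 2\delta_{1}^{-1}$ you correctly rule out $|\vbeta-\vgamma'|<1/(c|I'|)$ to conclude $\vbeta\notin W_{\vgamma',0}$. But Proposition~\ref{proptentgeo}(2) requires the \emph{opposite} datum, namely $\vbeta'\notin W_{\vgamma,\delta_{1}t}$, and nothing in your setup delivers that: $\vbeta'$ lives in $W_{\vgamma',1/|I'|}$ and you have no control over its position relative to $\vgamma$. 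Your final paragraph gestures at ``placing $\vbeta,\vbeta'$ in the regime of \eqref{eq222}'' but supplies no argument; the hypotheses of Proposition~\ref{proptentgeo}(2) are genuinely asymmetric in $(\vgamma,\vgamma')$ through the constraint $\beta_{j}<\gamma_{j}<\gamma_{j}'$, and swapping roles is not available. The secondary tie-break by near-minimal $\gamma_{j}$ does not help, since it only operates among tents already within a factor $2$ of the supremal scale. The fix is exactly the paper's: order the selection primarily by $\gamma_{j}$ (via strips of width tied to a uniform lower bound $t_{0}$ on $1/|I|$, which the paper obtains from \eqref{eq_global_L2_g_est}), not by scale.
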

Once Propositions \ref{prop_select} and \ref{prop_selectL2} guarantee the existence of well-behaved configurations of tents, the proof of the next proposition mainly follows the same line of argument as in \cite{do2015lp} and \cite{muscalu2013classical}. For completeness, we include the proof in section \ref{secbessel}.
\begin{proposition}[Bessel Type Estimate]\label{prop_L2_emb}
    Assume $N=1$. Given a compact set \(\Omega\) in \(\R\times \br{V\setminus \Gamma}\), a function $f\in L^2\br{\R}\cap L^\infty\br{\R}$, and \(\lambda>0\), there is a countable collection of tents $\T$
    such that
    \begin{equation}\label{eq_Tlarge_est}
        \sum_{(I,\vgamma )\in \T}\abs{I}\lesssim \frac{\nrm{f}_{L^2}^2}{\lambda^2}
    \end{equation}
        and
    \begin{equation}\label{eq_Omegasmall_con}
        \nrm{ 1_{\Omega\setminus\bigcup_{T\in \T}D_{T}}\cdot F_{j}f}_{S^j}\leq \lambda.
    \end{equation}
\end{proposition}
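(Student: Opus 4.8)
\emph{Strategy.} The size $\nrm{F}_{S^j}$ is the supremum over tents of the maximum of a uniform $L^\infty$ term and a localized $L^2$ term, so the plan is to delete, separately, a family of tents making each term $\le\lambda$, using the two selection algorithms applied to the compact set $\Omega$. For the $L^\infty$ term I must delete all of $\Omega\cap\abs{F_jf}^{-1}(\lambda,\infty)$; decomposing this as $\bigcup_{k\ge0}\big(\Omega\cap\abs{F_jf}^{-1}(\lambda_k,2\lambda_k]\big)$ with $\lambda_k:=2^k\lambda$, I apply Proposition \ref{prop_select} to $\Omega$ at threshold $\lambda_k$ for each $k\ge0$, obtaining countable tent collections $\T_k$ and point collections $\P_k\subseteq\Omega\cap\abs{F_jf}^{-1}(\lambda_k,2\lambda_k]$. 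For the $L^2$ term I apply Proposition \ref{prop_selectL2} to $\Omega$ at threshold $\lambda$, once in its stated form (for the $<j$ part) and once in its symmetric form (for the $>j$ part), obtaining tent collections $\T'_<,\T'_>$ and triple collections $\S_<,\S_>$. I then set $\T:=\big(\bigcup_{k\ge0}\T_k\big)\cup\T'_<\cup\T'_>$, a countable collection, and $\Omega_0:=\Omega\setminus\bigcup_{T\in\T}D_T$.

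\emph{Verifying \eqref{eq_Omegasmall_con}.} By \eqref{select1-1}, the set $\bigcup_{k\ge0}\bigcup_{T\in\T_k}D_T$ contains $\Omega\cap\abs{F_jf}^{-1}(\lambda,\infty)$, so $\abs{1_{\Omega_0}F_jf}\le\lambda$ identically and in particular $\nrm{1_{\Omega_0}F_jf}_{L^\infty(I\times W_{\vgamma,1/\abs{I}})}\le\lambda$ for every tent $(I,\vgamma)$. For the localized $L^2$ term, fix $(I,\vgamma)$; since $\Omega_0\subseteq\Omega\setminus\bigcup_{T\in\T'_<}D_T$, the bound \eqref{select2-1} gives $\abs{I}^{-1/2}\nrm{1_{\Omega_0}F_jf}_{L^2_\nu(I\times(W_{\vgamma,1/\abs{I}}\setminus U_\vgamma^j)^{<j})}\le\lambda/\sqrt2$, and the symmetric statement gives the same with $>j$ in place of $<j$. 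Since $W_{\vgamma,1/\abs{I}}\setminus U_\vgamma^j$ is the disjoint union of these two parts (any $\vbeta$ with $\beta_j=\gamma_j$ lies in $U_\vgamma^j$), adding the squares yields $\abs{I}^{-1/2}\nrm{1_{\Omega_0}F_jf}_{L^2_\nu(I\times(W_{\vgamma,1/\abs{I}}\setminus U_\vgamma^j))}\le\lambda$. Taking the maximum of the two bounds and then the supremum over $(I,\vgamma)$ gives \eqref{eq_Omegasmall_con}.

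\emph{Reducing \eqref{eq_Tlarge_est} to two energy bounds.} By \eqref{select1-2}, $\sum_{(I,\vgamma)\in\T_k}\abs{I}\lesssim\lambda_k^{-2}\sum_{(\alpha,\vbeta)\in\P_k}d_\Gamma(\vbeta)^{-1}\abs{F_jf(\alpha,\vbeta)}^2$; by \eqref{select2-2}, $\sum_{(I,\vgamma)\in\T'_<}\abs{I}\lesssim\lambda^{-2}\sum_{(I,\vgamma,S)\in\S_<}\nrm{F_jf}_{L^2_\nu(S)}^2$, and likewise for $\T'_>$. Since $\sum_{k\ge0}\lambda_k^{-2}\lesssim\lambda^{-2}$, it therefore suffices to prove the energy bounds
\begin{equation}\label{eq_energy_bessel}
\sum_{(\alpha,\vbeta)\in\P_k}\frac{\abs{F_jf(\alpha,\vbeta)}^2}{d_\Gamma(\vbeta)}\lesssim\nrm{f}_{L^2(\R)}^2
\qquad\text{and}\qquad
\sum_{(I,\vgamma,S)\in\S_<}\nrm{F_jf}_{L^2_\nu(S)}^2\lesssim\nrm{f}_{L^2(\R)}^2
\end{equation}
(together with the analogue of the second for $\S_>$), as then $\sum_{(I,\vgamma)\in\T}\abs{I}\lesssim\lambda^{-2}\nrm{f}_{L^2(\R)}^2$, which is \eqref{eq_Tlarge_est}.

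\emph{The energy bounds, and the main obstacle.} I would write $F_jf(\alpha,\vbeta)=\ang{f,g^j_{\alpha,\vbeta}}$ with the wave packet $g^j_{\alpha,\vbeta}(y):=\overline{(\Mod_{\beta_j}\Dil^{1}_{d_\Gamma(\vbeta)^{-1}}\varphi)(\alpha-y)}$, noting that $\nrm{g^j_{\alpha,\vbeta}}_{L^2(\R)}^2$ is comparable to $d_\Gamma(\vbeta)$, that $\widehat{g^j_{\alpha,\vbeta}}$ is supported in $B_{\frac{2}{5}\varepsilon d_\Gamma(\vbeta)}(\beta_j)$, and that $g^j_{\alpha,\vbeta}$ is spatially concentrated near $\alpha$ at scale $\sim(\varepsilon d_\Gamma(\vbeta))^{-1}$. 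I now fix the still-undetermined constant $\varepsilon$ (which, as announced, depends only on $\theta_0$, through $\rho$) so small that $\tfrac{2}{5}\varepsilon<\rho$. With this choice the frequency alternatives \eqref{select1-32} and \eqref{select2-32}, which provide a gap of size $\ge\rho(d_\Gamma(\vbeta)+d_\Gamma(\vbeta'))$ in the $\beta_j$-coordinate, force $\widehat{g^j_{\alpha,\vbeta}}$ and $\widehat{g^j_{\alpha',\vbeta'}}$ to have disjoint supports, hence $\ang{g^j_{\alpha,\vbeta},g^j_{\alpha',\vbeta'}}=0$; in the complementary case the spatial alternatives \eqref{select1-31} and \eqref{select2-31} separate the packet centers by at least $2\big(d_\Gamma(\vbeta)^{-1}+d_\Gamma(\vbeta')^{-1}\big)$. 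Granting this dichotomy, \eqref{eq_energy_bessel} follows by the same almost-orthogonality argument used for the energy estimates in \cite{do2015lp} and \cite{muscalu2013classical}: one groups $\P_k$ (resp.\ $\S_<$) according to the covering tents $\T_k$ (resp.\ $\T'_<$), whose lengths sum as in \eqref{select1-2} (resp.\ \eqref{select2-2}); inside a single tent the relevant packets form a coherent family to which the single-tent $L^2$ estimate applies, while across distinct tents the dichotomy above makes the energy form almost diagonal, so that a Schur/Cotlar-type summation closes the estimate. The step I expect to be the main obstacle is carrying the variable Whitney scale $d_\Gamma(\vbeta)$ through this almost-orthogonality bookkeeping — this is precisely what the geometry of Proposition \ref{proptentgeo} was built to supply, ensuring that distinct selected tents interact like the separated trees of the classical theory. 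Once $\varepsilon$ has been calibrated as above and this step is carried out, the remaining assembly is routine.
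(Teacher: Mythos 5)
Your overall architecture is the paper's: a dyadic level-set decomposition of $\Omega\cap\abs{F_jf}^{-1}(\lambda,\infty)$ handled by Proposition \ref{prop_select} at thresholds $2^k\lambda$, the two orientations of Proposition \ref{prop_selectL2} for the localized $L^2$ component, the same square-addition to verify \eqref{eq_Omegasmall_con}, and a reduction of \eqref{eq_Tlarge_est} to two Bessel-type energy bounds (the paper's Lemmas \ref{prop_Linfty_comp_est} and \ref{prop_L2_comp_est}). Your first energy bound and its use are exactly Lemma \ref{prop_Linfty_comp_est}, and your calibration of $\varepsilon$ so that the frequency gap $\rho(d_\Gamma(\vbeta)+d_\Gamma(\vbeta'))$ forces disjoint Fourier supports is the same mechanism the paper uses (it takes $\varepsilon=\delta_1\rho^2/2$).

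There is, however, one genuine gap: you apply Proposition \ref{prop_selectL2} to $\Omega$ itself, in parallel with the $L^\infty$ stage, whereas the paper applies it to $\widetilde\Omega=\Omega\setminus\bigcup_{T\in\T_\infty}D_T$, i.e.\ only after the $L^\infty$-heavy tents have been removed, so that $1_{\widetilde\Omega}\abs{F_jf}\le\lambda$. This ordering is not cosmetic. Your second energy bound $\sum_{(I,\vgamma,S)\in\S_<}\nrm{F_jf}_{L^2_\nu(S)}^2\lesssim\nrm{f}_{L^2}^2$ is precisely Lemma \ref{prop_L2_comp_est}, whose hypothesis is $1_\Omega\abs{F_jf}\le\lambda$; in its proof the off-diagonal terms are controlled by substituting $\abs{F_jf(\alpha,\vbeta)},\abs{F_jf(\alpha',\vbeta')}\le\lambda\lesssim\abs{I}^{-1/2}\nrm{F_jf}_{L^2_\nu(S)}$, i.e.\ the pointwise upper bound by $\lambda$ is combined with the lower bound from \eqref{select2-2} to convert the double integral into the diagonal quantity. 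If the selection is run on all of $\Omega$, the sets $S$ may contain points where $\abs{F_jf}>\lambda$ and this substitution fails; the "same almost-orthogonality argument as in the references" that you invoke also uses that the function is $\sim\lambda$ on the selected mass. The fix is exactly the paper's sequential scheme: first build $\T_\infty$, pass to $\widetilde\Omega\subseteq\abs{F_jf}^{-1}[0,\lambda]$, and only then run the $<j$ and $>j$ selections. With that reordering (and with the energy lemmas actually carried out rather than sketched), your proof coincides with the paper's.
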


\section{Proof of Theorem \ref{mainthm}}\label{proofmainthm}
Define a smooth function on $V$
\begin{equation*}
\rchi(\vbeta):={\Dil}^{\infty}_{\varepsilon}\widetilde{\eta}(|\vbeta|).
\end{equation*}
%which is constant one in $B_{\frac{\varepsilon}{10}}(0)$ and supported in $B_{\frac{2\varepsilon}{10}}(0)$.
 For $\bm{\vbeta} \in V\setminus \Gamma$,  define the bump function adapted to position $\bm{\vbeta}$ by
\begin{equation*}
\rchi_{\vbeta}:={\Tr}_{\vbeta}{\Dil}^{\infty}_{d_{\Gamma}(\vbeta)}\rchi \, .
\end{equation*}
Define the normalized bump function as
\begin{equation*}
\widetilde{\rchi}_{\vbeta}:=X_{\Gamma}^{-1}\cdot \rchi_{\vbeta},
\end{equation*}
where
\begin{equation*}
    X_{\Gamma}:=    %\int_{V}\rchi_{\vbeta}\frac{d\mathcal{H}^{2}(\vbeta)}{d_{\Gamma}(\vbeta)^{2}}=
    \int_{V}\rchi_{\vbeta}d\mu (\vbeta).
\end{equation*}
Then we have the identity
\begin{equation*}
    1_{V\setminus \Gamma}=\int_{V}\widetilde{\rchi}_{\vbeta}d\mu (\vbeta).
\end{equation*}
As $\rchi$ is supported in $B_{\frac{2\varepsilon}{10} }(0)$ and $\widehat{\varphi}$ is constant one on $B_{\frac{2\varepsilon}{10}}(0)$,
we have
\begin{equation*}
    1_{V\setminus \Gamma}(\vxi)=\int_{V}\widetilde{\rchi}_{\vbeta}(\vxi)\prod_{j=1}^{3}\left({\Tr}_{\beta_{j}}{\Dil}^{\infty}_{d_{\Gamma}(\vbeta)}\widehat{\varphi}\right)(\xi_{j})d\mu (\vbeta).
\end{equation*}
Inserting this into \eqref{eq_trilinear_form}, in our case $n=2$, we obtain for $\Lambda (f_{1},f_{2},f_{3})$ the expression
\begin{equation}\label{eqthreesix}
    \begin{aligned}
        %\Lambda (f_{1},f_{2},f_{3})
        %&=
%\int_{V}m(\vxi)\prod_{j=1}^{3}\widehat{f}_{j}(\xi_{j})d\mathcal{H}^{2}(\vxi)\\ &=
\int_{V}\int_{V}\left(m(\vxi)\widetilde{\rchi}_{\vbeta}(\vxi)\right)\cdot \prod_{j=1}^{3}\left(\left({\Tr}_{\beta_{j}}{\Dil}^{\infty}_{d_{\Gamma}(\vbeta)}\widehat{\varphi}\right)(\xi_{j})\widehat{f}_{j}(\xi_{j}) \right)d\mathcal{H}^{2}(\vxi)d\mu(\vbeta).
    \end{aligned}
\end{equation}
Let $m_{\vbeta}=m\cdot \widetilde{\chi}_{\vbeta}$ and $\mathcal{F}f=\widehat{f}$ be the Fourier transform of $f$. We define
\begin{equation}
    K\br{\valpha,\vbeta}:=\mathcal{F}(m_{\vbeta}\cdot d\mathcal{H}^{2}_{V})(\valpha)
    =\int_{V}m_{\vbeta}(\vxi)e^{-2\pi i\valpha\cdot \vxi}d\mathcal{H}^{2}(\vxi).
\end{equation}
Note that $K(\valpha, \vbeta)$ satisfies the invariance property \eqref{kab}.
Applying Plancherel to the inner integral in \eqref{eqthreesix}, we obtain 
\begin{equation*}
    \Lambda (f_{1},f_{2},f_{3})=\int_{V}\int_{\mathbb{R}^{3}}K\br{\valpha,\vbeta}\cdot \prod_{j=1}^{3}\left(({\Mod}_{\beta_{j}}{\Dil}_{d_{\Gamma}(\vbeta)^{-1}}^{1}\varphi) \ast f_{j}\right)(\alpha_{j})d\valpha d\mu(\vbeta)
\end{equation*}
\begin{equation}
   = \int_{V}\int_{\mathbb{R}^{3}}K\br{\valpha,\vbeta}\cdot \prod_{j=1}^{3}(F_{j}f_j)(\alpha_{j},\vbeta)d\valpha d\mu(\vbeta)\, ,
\end{equation}
where we use the notation $F_j$ as defined in \eqref{definefj}.

We verify the kernel condition \eqref{kernelcond} for $K$. Let $\vbeta \in V\setminus \Gamma$ and $s>1$.
%\begin{equation}
%    \nrm{\br{1+\abs{d_\Gamma\br{\vbeta}\valpha}^2}^{\frac{s}{2}}\cdot K\br{\valpha,\vbeta}}_{L^2_{d\mathcal{H}^{2}(\valpha)}\br{V}}\lesssim d_{\Gamma}(\vbeta).
%\end{equation}
Expanding the kernel, we observe
\begin{equation}\label{eqthreten}
 \left\|d_{\Gamma}(\vbeta)^{-1}\left(1+|d_{\Gamma}(\vbeta)\valpha|^{2}\right)^{\frac{s}{2}}\cdot K\br{\valpha,\vbeta}\right\|_{L^{2}_{\valpha }(V)}
 \end{equation}
 \begin{equation*}
       =
        \left\|d_{\Gamma}(\vbeta)^{-1}\left(1+|d_{\Gamma}(\vbeta)\valpha|^{2}\right)^{\frac{s}{2}}\cdot \mathcal{F}\left( m \cdot \left( {\Tr}_{\vbeta}{\Dil}^{\infty}_{d_{\Gamma}(\vbeta)}\rchi \right)\cdot X_{\Gamma}^{-1} \cdot d\mathcal{H}^{2} \right)(\valpha)\right\|_{L^{2}_{\valpha}(V)}\, .
\end{equation*}     
Applying the $L^2$ isometry  ${\Dil}_{d_{\Gamma}(\vbeta)}^{2}{\Mod}_{-\vbeta}$ on the function inside the $L^{2}$ norm and distributing powers of $d_{\Gamma}(\vbeta)$
 equates \eqref{eqthreten} with 
\begin{equation*}
\left\|\left(1+|\valpha|^{2}\right)^{\frac{s}{2}}\cdot \mathcal{F}\left(\left( {\Dil}^{\infty}_{d_{\Gamma}(\vbeta)^{-1}}{\Tr}_{-\vbeta}m \right) \cdot \rchi  \cdot \left( {\Dil}^{\infty}_{d_{\Gamma}(\vbeta)^{-1}}{\Tr}_{-\vbeta}X_{\Gamma}^{-1} \right) \cdot d\mathcal{H}^{2} \right)(\valpha)\right\|_{L^{2}_{\valpha}(V)}.
\end{equation*}
Applying the definition of $H^{s}(V)$  and the fact that $H^{s}(V)$ is a Banach algebra when $s>1$, we then estimate \eqref{eqthreten} by        \begin{equation*}
        \left\| \left( {\Dil}^{\infty}_{d_{\Gamma}(\vbeta)^{-1}}{\Tr}_{-\vbeta}m \right)\cdot \Phi \cdot  \left( {\Dil}^{\infty}_{d_{\Gamma}(\vbeta)^{-1}}{\Tr}_{-\vbeta}X_{\Gamma}^{-1} \right)\cdot 
     \rchi  \right\|_{H^{s}(V)}
\end{equation*}
\begin{equation}\label{eq313}
        \lesssim \left\|  \left( \mathrm{Dil}^{\infty}_{d_{\Gamma}(\vbeta)^{-1}}\mathrm{Tr}_{-\vbeta}m \right)\cdot \Phi \right\|_{H^{s}(V)}\cdot  \left\| \left( \mathrm{Dil}^{\infty}_{d_{\Gamma}(\vbeta)^{-1}}\mathrm{Tr}_{-\vbeta}X_{\Gamma}^{-1} \right)\cdot  \rchi  \right\|_{H^{s}(V)}
\end{equation}

 The first factor in \eqref{eq313} is bounded by $1$ by \eqref{Hormcond}. We introduce three lemmas to prove the bound for the second factor.

\begin{lemma}\label{betacomparable}
   For $\vbeta_1,\vbeta_2 \in V\setminus \Gamma$, we have
   \begin{equation}\label{eqbc315}
   d_\Gamma (\vbeta_1)\le d_\Gamma(\vbeta_2)+|\vbeta_1-\vbeta_2|
   \end{equation}
 \end{lemma}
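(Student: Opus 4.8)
The statement is just the triangle inequality for the distance-to-a-set function $d_\Gamma$, so the proof will be short. First I would recall that $\Gamma$ is closed, so for $\vbeta_2 \in V\setminus\Gamma$ the infimum defining $d_\Gamma(\vbeta_2)=\inf_{\vxi\in\Gamma}|\vbeta_2-\vxi|$ is either attained at some $\vgamma\in\Gamma$ or can be approximated: fix $\epsilon>0$ and choose $\vgamma=\vgamma_\epsilon\in\Gamma$ with $|\vbeta_2-\vgamma|\le d_\Gamma(\vbeta_2)+\epsilon$. Then, since this same $\vgamma$ is a competitor in the infimum defining $d_\Gamma(\vbeta_1)$, the ordinary triangle inequality in the Euclidean space $V$ gives
\[
d_\Gamma(\vbeta_1)\le |\vbeta_1-\vgamma|\le |\vbeta_1-\vbeta_2|+|\vbeta_2-\vgamma|\le |\vbeta_1-\vbeta_2|+d_\Gamma(\vbeta_2)+\epsilon.
\]
Letting $\epsilon\to 0$ yields \eqref{eqbc315}. (If one prefers, use that $\Gamma$ closed and $V$ finite-dimensional makes the infimum an attained minimum on a suitable compact truncation of $\Gamma$, eliminating the $\epsilon$.)

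There is essentially no obstacle here; the only thing to be careful about is that $d_\Gamma$ is defined with an infimum over $\Gamma$, which is merely closed (not compact), so I would phrase the argument with the $\epsilon$-approximate minimizer to avoid any issue about attainment — though even attainment is harmless since the distance from a fixed point to a closed nonempty set in $\R^n$ is always realized. I would also note symmetry in passing: swapping the roles of $\vbeta_1$ and $\vbeta_2$ gives $|d_\Gamma(\vbeta_1)-d_\Gamma(\vbeta_2)|\le|\vbeta_1-\vbeta_2|$, i.e.\ $d_\Gamma$ is $1$-Lipschitz, which is presumably the form in which the lemma will be used later (e.g.\ to compare $d_\Gamma(\vbeta)$ across nearby points when controlling the second factor in \eqref{eq313}).

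Since this lemma is one of three auxiliary facts feeding into the bound of the factor $\|(\mathrm{Dil}^\infty_{d_\Gamma(\vbeta)^{-1}}\mathrm{Tr}_{-\vbeta}X_\Gamma^{-1})\cdot\rchi\|_{H^s(V)}$, I expect it to be combined with a lower bound on $d_\Gamma$ (some reverse comparability valid on the support of $\rchi_{\vbeta}$, which has radius $\sim d_\Gamma(\vbeta)$) and a quantitative bound on $X_\Gamma$; the Lipschitz estimate above is exactly what lets one say that $d_\Gamma(\vxi)\sim d_\Gamma(\vbeta)$ whenever $|\vxi-\vbeta|\lesssim d_\Gamma(\vbeta)$, hence that the rescaled function $\mathrm{Dil}^\infty_{d_\Gamma(\vbeta)^{-1}}\mathrm{Tr}_{-\vbeta}X_\Gamma^{-1}$ and its derivatives are bounded uniformly in $\vbeta$ on the relevant ball. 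That downstream use is where the real work lies, but the lemma itself is immediate from the triangle inequality.
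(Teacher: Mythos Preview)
Your proof is correct and matches the paper's own argument essentially line for line: choose an $\epsilon$-approximate minimizer $\vgamma\in\Gamma$ for $d_\Gamma(\vbeta_2)$, apply the triangle inequality, and let $\epsilon\to 0$. Your additional remarks on the Lipschitz interpretation and downstream use are accurate but not part of the paper's proof of this lemma.
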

\begin{proof}
    For all $\epsilon>0$, there exists $\vgamma_{2}\in \Gamma$ such that
    \begin{equation}\label{eqbc316}
        |\vgamma_{2}-\vbeta_{2}|<d_{\Gamma}(\vbeta)+\epsilon.
    \end{equation}
    By the triangle inequality,
\begin{equation*}
    d_{\Gamma}(\vbeta_{1})\leq |\vgamma_{2}-\vbeta_{1}|\leq
    |\vgamma_{2}-\vbeta_{2}|+ |\vbeta_{1}-\vbeta_{2}|<d_{\Gamma}(\vbeta_{2})+|\vbeta_{1}-\vbeta_{2}|+\epsilon
\end{equation*}
Since \eqref{eqbc316} holds for all $\varepsilon>0$, we obtain \eqref{eqbc315}.
\end{proof}

\begin{lemma}\label{lemlowbdd}
    For $x\in V\setminus \Gamma$, then
\begin{equation}\label{rchilowerbdd}
   |X_{\Gamma}(x)|\gtrsim 1. 
\end{equation}
\end{lemma}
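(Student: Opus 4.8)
\emph{Proof proposal.} The plan is to discard all but a small, explicitly described portion of the integral defining $X_{\Gamma}(x)$ and estimate that portion from below. Since $\widetilde{\eta}\ge 0$, the function $\rchi$ is nonnegative, hence so is each $\rchi_{\vbeta}$, and therefore $X_{\Gamma}(x)=\int_{V}\rchi_{\vbeta}(x)\,d\mu(\vbeta)\ge 0$; in particular $|X_{\Gamma}(x)|=X_{\Gamma}(x)$. Unwinding the definitions of $\rchi$ and of $\rchi_{\vbeta}={\Tr}_{\vbeta}{\Dil}^{\infty}_{d_{\Gamma}(\vbeta)}\rchi$ gives $\rchi_{\vbeta}(x)=\widetilde{\eta}\bigl(|x-\vbeta|/(\varepsilon\,d_{\Gamma}(\vbeta))\bigr)$, which equals $1$ whenever $|x-\vbeta|\le \tfrac{\varepsilon}{10}d_{\Gamma}(\vbeta)$. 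Consequently,
\[
X_{\Gamma}(x)\ \ge\ \int_{\{\vbeta\in V\setminus\Gamma\,:\,|x-\vbeta|\le\frac{\varepsilon}{10}d_{\Gamma}(\vbeta)\}}\frac{d\mathcal{H}^{2}(\vbeta)}{d_{\Gamma}(\vbeta)^{2}}.
\]

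Next I would localize near $x$ and use that $d_{\Gamma}$ varies slowly. Set $r:=d_{\Gamma}(x)>0$. We may assume $\varepsilon<10$, which is legitimate since $\varepsilon$ is fixed in Section \ref{secbessel} as a small constant depending only on $\theta_{0}$. For $\vbeta\in V$ with $|x-\vbeta|\le\tfrac{\varepsilon}{20}r$, Lemma \ref{betacomparable} gives $d_{\Gamma}(\vbeta)\ge r-|x-\vbeta|\ge\tfrac r2$ and $d_{\Gamma}(\vbeta)\le r+|x-\vbeta|\le 2r$; the lower bound yields $|x-\vbeta|\le\tfrac{\varepsilon}{20}r=\tfrac{\varepsilon}{10}\cdot\tfrac r2\le\tfrac{\varepsilon}{10}d_{\Gamma}(\vbeta)$, so the disk $\{\vbeta\in V:|x-\vbeta|<\tfrac{\varepsilon}{20}r\}$ lies inside the domain of integration above, where the integrand is at least $(2r)^{-2}$. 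Therefore
\[
X_{\Gamma}(x)\ \ge\ \frac{1}{4r^{2}}\,\mathcal{H}^{2}\bigl(\{\vbeta\in V:|x-\vbeta|<\tfrac{\varepsilon}{20}r\}\bigr)=\frac{1}{4r^{2}}\cdot\pi\Bigl(\frac{\varepsilon r}{20}\Bigr)^{2}=\frac{\pi\varepsilon^{2}}{1600},
\]
a positive constant depending only on $\varepsilon$, hence only on $\theta_{0}$. This is \eqref{rchilowerbdd}.

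The argument is elementary and I do not anticipate a genuine obstacle; the only points requiring care are that $\rchi_{\vbeta}(x)=1$ on the stated disk (which is built into the normalization of $\widetilde{\eta}$ and $\rchi$) and that the weight $d_{\Gamma}(\vbeta)^{-2}$ is comparable to $d_{\Gamma}(x)^{-2}$ on a disk of radius comparable to $d_{\Gamma}(x)$ about $x$, which is exactly the content of Lemma \ref{betacomparable}. The scale $r=d_{\Gamma}(x)$ then cancels between the weight and the area of the disk, so the resulting lower bound is uniform in $x\in V\setminus\Gamma$, as required.
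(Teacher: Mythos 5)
Your proposal is correct and follows essentially the same route as the paper: restrict the integral to a ball of radius $\tfrac{\varepsilon}{20}d_{\Gamma}(x)$ about $x$, use Lemma \ref{betacomparable} to see that $d_{\Gamma}(\vbeta)\sim d_{\Gamma}(x)$ there and that $\rchi_{\vbeta}(x)=1$, and then integrate the weight $d_{\Gamma}(\vbeta)^{-2}$ over that ball. The only differences are cosmetic (explicit constants and the remark that $X_{\Gamma}\ge 0$).
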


\begin{proof}
    For $\vbeta \in B_{r}(x)$, where $r=\frac{\varepsilon}{20}d_{\Gamma}(x)$, by Lemma \ref{betacomparable}, we have
\begin{equation*}
    d_{\Gamma}(x)\leq d_{\Gamma}(\vbeta)+|\vbeta-x|\leq d_{\Gamma}(\vbeta)+\frac{\varepsilon}{20}d_{\Gamma}(x).
\end{equation*}
Hence $d_{\Gamma}(x)$ can be dominated by a constant times $d_{\Gamma}(\vbeta)$.
\begin{equation*}
    d_{\Gamma}(x)<\frac{1}{1-\varepsilon/20}d_{\Gamma}(\vbeta).
\end{equation*}
Then
\begin{equation*}
    |\vbeta-x|\leq \frac{\varepsilon}{20}d_{\Gamma}(x)\leq  \frac{\varepsilon}{10}d_{\Gamma}(\vbeta).
\end{equation*}
 That is $\rchi_{\vbeta}(x)=1$. On the other hand, for $\vbeta \in B_{r}(x)$,
 \begin{equation*}
     d_{\Gamma}(\vbeta)\leq d_{\Gamma}(x)+|\vbeta -x|<(1+\frac{\varepsilon}{20})d_{\Gamma}(x).
 \end{equation*}
 Therefore, we obtain a lower bound of $X_{\Gamma}(x)$.
 \begin{equation*}
     X_{\Gamma}(x)=\int_{V}\rchi_{\vbeta}\frac{d\mathcal{H}^{2}(\vbeta)}{d_{\Gamma}(\vbeta)^{2}}\geq \int_{B_{r}(x)}(1+\frac{\varepsilon}{20})^{-2}d_{\Gamma}(x)^{-2}d\mathcal{H}^{2}(\vbeta)\gtrsim 1.
 \end{equation*}
\end{proof}

\begin{lemma}\label{lemderbdd}
    For $x\in V\setminus \Gamma$, and multi-index $\alpha$,
\begin{equation}\label{dericontr}
    \left|(\partial^{\alpha} X_{\Gamma}\right) (x)|\lesssim d_{\Gamma}(x)^{-|\alpha|}.
\end{equation}
\end{lemma}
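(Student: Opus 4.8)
The plan is to differentiate the defining integral $X_{\Gamma}(x)=\int_{V}\rchi_{\vbeta}(x)\,\frac{d\mathcal{H}^{2}(\vbeta)}{d_{\Gamma}(\vbeta)^{2}}$ under the integral sign and then count volume. First I would justify that this is legitimate: since $x\in V\setminus\Gamma$ and $\Gamma$ is closed, there is a ball $B$ about $x$ disjoint from $\Gamma$; for $x'\in B$, the factor $\rchi_{\vbeta}(x')=\rchi\!\left(\frac{x'-\vbeta}{d_{\Gamma}(\vbeta)}\right)$ is nonzero only if $|x'-\vbeta|\le \frac{\varepsilon}{5}d_{\Gamma}(\vbeta)$ (as $\rchi$ is supported in $B_{2\varepsilon/10}(0)$), and combining this with the $1$-Lipschitz estimate $d_{\Gamma}(\vbeta)\le d_{\Gamma}(x')+|x'-\vbeta|$ from Lemma \ref{betacomparable} confines $\vbeta$ to a fixed compact subset of $V\setminus\Gamma$, on which $d_{\Gamma}(\vbeta)^{-2}$ and all $x$-derivatives of the integrand are uniformly bounded. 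Hence $X_{\Gamma}$ is smooth on $B$ and $\partial_{x}^{\alpha}$ passes inside the integral.

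Next I would carry out the differentiation by the chain rule: each $x$-derivative of $\rchi\!\left(\frac{x-\vbeta}{d_{\Gamma}(\vbeta)}\right)$ produces a factor $d_{\Gamma}(\vbeta)^{-1}$, so
\begin{equation*}
\partial_{x}^{\alpha}X_{\Gamma}(x)=\int_{V}d_{\Gamma}(\vbeta)^{-|\alpha|}\,(\partial^{\alpha}\rchi)\!\left(\frac{x-\vbeta}{d_{\Gamma}(\vbeta)}\right)\frac{d\mathcal{H}^{2}(\vbeta)}{d_{\Gamma}(\vbeta)^{2}}.
\end{equation*}
Since $\partial^{\alpha}\rchi$ is bounded (with constant depending on $\alpha$ and on $\varepsilon$, which depends only on $\theta_{0}$), and the integrand is supported where $|x-\vbeta|\le \frac{\varepsilon}{5}d_{\Gamma}(\vbeta)$, the estimate reduces to bounding the integral over that support region.

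Finally, on the support region I would use Lemma \ref{betacomparable} to get $d_{\Gamma}(x)\le(1+\tfrac{\varepsilon}{5})d_{\Gamma}(\vbeta)$ and, by feeding $|x-\vbeta|\le\frac{\varepsilon}{5}d_{\Gamma}(\vbeta)\le\frac{\varepsilon}{5}\bigl(d_{\Gamma}(x)+|x-\vbeta|\bigr)$ back on itself, also $|x-\vbeta|\lesssim d_{\Gamma}(x)$ and $d_{\Gamma}(\vbeta)\sim d_{\Gamma}(x)$. Thus the region lies in a ball of radius $\lesssim d_{\Gamma}(x)$ about $x$ on which $d_{\Gamma}(\vbeta)^{-|\alpha|-2}\sim d_{\Gamma}(x)^{-|\alpha|-2}$, whence
\begin{equation*}
\bigl|\partial_{x}^{\alpha}X_{\Gamma}(x)\bigr|\lesssim d_{\Gamma}(x)^{-|\alpha|-2}\,\mathcal{H}^{2}\!\left(B_{Cd_{\Gamma}(x)}(x)\right)\lesssim d_{\Gamma}(x)^{-|\alpha|-2}\cdot d_{\Gamma}(x)^{2}=d_{\Gamma}(x)^{-|\alpha|},
\end{equation*}
which is \eqref{dericontr}. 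The only genuinely technical point is the justification of differentiating under the integral sign; the remainder is the change of variables in the chain rule together with the comparability $d_{\Gamma}(\vbeta)\sim d_{\Gamma}(x)$ and a volume count, all routine.
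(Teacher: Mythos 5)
Your proposal is correct and follows essentially the same route as the paper: differentiate under the integral, note that the support condition $|x-\vbeta|\le\frac{2\varepsilon}{10}d_{\Gamma}(\vbeta)$ together with the Lipschitz bound of Lemma \ref{betacomparable} forces $d_{\Gamma}(\vbeta)\sim d_{\Gamma}(x)$ and confines $\vbeta$ to a ball of radius $\lesssim d_{\Gamma}(x)$, then conclude by a volume count. The only difference is that you explicitly justify the interchange of derivative and integral, which the paper takes for granted.
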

\begin{proof}
Assume $\vbeta \in V\setminus \Gamma$ satisfies
\begin{equation}\label{betarange}
    |\vbeta -x|<\frac{2\varepsilon}{10}d_{\Gamma}(\vbeta).
\end{equation}
Then
\begin{equation*}
    d_{\Gamma}(\vbeta)\leq d_{\Gamma}(x)+|\vbeta -x|\leq  d_{\Gamma}(x)+\frac{2\varepsilon}{10}d_{\Gamma}(\vbeta).
\end{equation*}
Hence $d_{\Gamma}(\vbeta)$ can be dominated by a constant times $d_{\Gamma}(x)$.
\begin{equation*}
    d_{\Gamma}(\vbeta)\leq \frac{1}{1-2\varepsilon/10}d_{\Gamma}(x).
\end{equation*}
Then
\begin{equation*}
    |\vbeta -x|\leq \frac{2\varepsilon}{10-2\varepsilon}d_{\Gamma}(x)\leq \frac{2\varepsilon}{5}d_{\Gamma}(x).
\end{equation*}
Also note that for $\vbeta$ satisfies \eqref{betarange},
\begin{equation*}
    d_{\Gamma}(x)\leq d_{\Gamma}(\vbeta)+|\vbeta -x|\leq (1+\frac{2\varepsilon}{10})d_{\Gamma}(\vbeta).
\end{equation*}
Let $r=\frac{2\varepsilon}{5}d_{\Gamma}(x)$. We have an upper bound of derivatives of $X_{\Gamma}(x)$.
\begin{equation*}
      \left|(\partial^{\alpha} X_{\Gamma}\right) (x)|\leq \int_{B_{r}(x)}\|\partial^{\alpha}\rchi \|_{L^{\infty}}\cdot d_{\Gamma}(\vbeta)^{-|\alpha|}\frac{d\mathcal{H}^{2}(\vbeta)}{d_{\Gamma}(\vbeta)^{2}}\lesssim d_{\Gamma}(x)^{-|\alpha|}.
\end{equation*}
\end{proof}

Back to the estimate of the second factor of \eqref{eq313}. Let $A$ be the least integer larger than $s$. Denote $ \mathrm{Dil}^{\infty}_{d_{\Gamma}(\vbeta)^{-1}}\mathrm{Tr}_{-\vbeta}X_{\Gamma}$ as $X_{\Gamma,\vbeta}$
and note that by Lemma \ref{lemlowbdd} and Lemma \ref{lemderbdd}, we have $|X_{\Gamma ,\vbeta}^{-1}|\lesssim 1$ and $|\partial^{\alpha}X_{\Gamma ,\vbeta}|\lesssim 1$  on the support of $\rchi$. By chain rule and Leibniz rule, this implies

 \begin{equation*}
      \left\|X_{\Gamma,\vbeta}^{-1}\cdot \rchi \right\|_{H^{s}}\lesssim \left\|X_{\Gamma,\vbeta}^{-1}\cdot \rchi \right\|_{L^{2}}+ \sum_{|\alpha|=A} \left\|\partial^{\alpha} \left(X_{\Gamma,\vbeta}^{-1} \cdot \rchi \right) \right\|_{L^{2}}
\end{equation*}
 \begin{equation}\label{reg334}
    \lesssim 1+ \sum_{|\alpha|\le A} \left\|\partial^{\alpha} \left(X_{\Gamma,\vbeta}^{-1}  \right) \right\|_{L^{\infty}(B_{\frac{2\varepsilon}{10}}(0))}\lesssim 1.
\end{equation}
% \begin{equation}
%   \lesssim 1+ \sum_{|\alpha|\le A} \left\|\partial^{\alpha} \left(X_{\Gamma,\vbeta}  \right) \right\|_{L^{\infty}(B_{\frac{2\varepsilon}{10}}(0))}
%      \left\|X_{\Gamma,\vbeta}  ^{-|\alpha|} \right\|_{L^{\infty}(B_{\frac{2\varepsilon}{10}}(0))}\lesssim 1.
% \end{equation}
This completes the estimate of \eqref{eq313}.

%\section{Proof of Lemma \ref{subconelemma} and Lemma \ref{scalebddlemma}: Geometry of Cones}
%\input{geometry of cones}

%\section{Proof of Lemma \ref{strdis} and Lemma \ref{lem_sym_tent_cover}: Geometry of Tents}
%\input{geometry of tents}

\section{Proof of Proposition \ref{propTentest}: Tent Estimate}\label{sectentest}
Let $i$ be given, and let $i'$ and $i''$ be the indices in $\{1,2,3\}$ different from $i$. We decompose the kernel $K$ 
\begin{equation}\label{kerdec}
K=\sum_{k\ge 0} K1_{E_k}\end{equation}
where 
\begin{equation}\label{sigmatorus}
E_k:=
\left\{
    \begin{aligned}
    &\left\{
        \br{\valpha,\vbeta}\in \mathbb{R}^{3}\times V
    :
       \abs{d_\Gamma(\vbeta)\Proj_{V}\valpha}
        \in\left(2^{k-1},2^k\right]
    \right\}, & k\in \N. \\
    &\left\{
        \br{\valpha,\vbeta}\in \mathbb{R}^{3}\times V
    :
       \abs{d_\Gamma(\vbeta)\Proj_{V}\valpha}
        \leq 1
    \right\}, & k = 0.
    \end{aligned}
\right.
\end{equation}
Identity \eqref{kerdec} holds because $\left\{E_k\right\}_{k=0}^{\infty}$ partitions 
$\mathbb{R}^{3}\times V$. To show \eqref{propTentest}, it suffices to show for each $k\ge 0$
\begin{equation}\label{tentestk}
    \left\|K(\valpha, \vbeta)\cdot \prod_{j=1}^{3}(F_{j}f_{j})(\alpha_{i},\vbeta)\right\|_{L^{1}_{\valpha,\mu(\vbeta)}\br{
        E_k\cap (I\ve_{i}\oplus \ve_{i}^{\perp})\times W_{\vgamma,1/|I|}
        % ,d\mathcal{H}^{3}(\valpha)d\mathcal{H}^{2}(\vbeta)
    }}\!\!\!\lesssim\br{1+k}2^{k\br{1-s} }|I|\prod_{j=1}^{3}\|F_{j}f_{j}\|_{S^{j}}
\end{equation}
because summing the right-hand side over $k$ gives the desired result
since \(s>1\).

\begin{lemma}\label{lem41}
For all $k\ge 0$, $\br{\valpha,\vbeta}\in E_k$, and $j,j'\in \{1,2,3\}$
\begin{equation}\label{ksigma supp control}
     \abs{\alpha_j-\alpha_{j'}}\leq\frac{2^{k+1}}{d_\Gamma\br{\vbeta}}.
\end{equation}
\end{lemma}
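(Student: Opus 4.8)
The plan is to reduce the coordinate difference $\alpha_j-\alpha_{j'}$ to a quantity depending only on $\Proj_V\valpha$, whose norm is exactly what the defining condition of $E_k$ controls. The point is that $V=\{\vxi\in\R^3:\xi_1+\xi_2+\xi_3=0\}$ is precisely the orthogonal complement of the vector $(1,1,1)$, so adding a multiple of $(1,1,1)$ to a vector changes each coordinate by the same amount and hence leaves all differences $\alpha_j-\alpha_{j'}$ invariant.

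First I would write $\valpha=\Proj_V\valpha+c\,(1,1,1)$ for the appropriate scalar $c\in\R$, which is legitimate since $\valpha-\Proj_V\valpha$ lies in $V^\perp=\mathrm{span}\{(1,1,1)\}$. Taking the $j$-th coordinate minus the $j'$-th coordinate, the $c\,(1,1,1)$ terms cancel, giving $\alpha_j-\alpha_{j'}=(\Proj_V\valpha)_j-(\Proj_V\valpha)_{j'}$. Next, since each coordinate of a vector $w\in\R^3$ satisfies $\abs{w_j}=\abs{\langle w,e_j\rangle}\le\abs{w}$, the triangle inequality yields $\abs{\alpha_j-\alpha_{j'}}\le\abs{(\Proj_V\valpha)_j}+\abs{(\Proj_V\valpha)_{j'}}\le 2\abs{\Proj_V\valpha}$. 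Finally, the definition \eqref{sigmatorus} of $E_k$ gives $\abs{d_\Gamma(\vbeta)\Proj_V\valpha}\le 2^k$ on $E_k$ for every $k\ge 0$ (for $k=0$ this is the bound $1=2^0$; for $k\ge 1$ it is the right endpoint of the dyadic interval $(2^{k-1},2^k]$), hence $\abs{\Proj_V\valpha}\le 2^k/d_\Gamma(\vbeta)$. Combining these three estimates gives $\abs{\alpha_j-\alpha_{j'}}\le 2^{k+1}/d_\Gamma(\vbeta)$, which is \eqref{ksigma supp control}.

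I do not expect any genuine obstacle: the statement is a soft geometric fact about the plane $V\subset\R^3$ together with the definition of the dyadic partition $\{E_k\}$. The only small points to keep in mind are that the bound $\abs{\Proj_V\valpha}\le 2^k/d_\Gamma(\vbeta)$ must be stated uniformly over all $k\ge 0$ (the case $k=0$ included), and that $d_\Gamma(\vbeta)>0$, which is harmless since the integration defining the model form ignores $\Gamma$ and so we may take $\vbeta\in V\setminus\Gamma$.
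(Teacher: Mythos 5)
Your proposal is correct and is essentially the paper's own argument: the paper likewise writes $\valpha+c(1,1,1)\in V$ (i.e.\ identifies $\valpha$ modulo the $(1,1,1)$-direction with $\Proj_V\valpha$), observes that coordinate differences are unchanged, and bounds them by twice the norm of the projection, which the definition of $E_k$ controls by $2^k/d_\Gamma(\vbeta)$ in all cases $k\ge 0$. No issues.
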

\begin{proof}
  Define $c$ implicitly by the condition
\begin{equation*}
    (\alpha_{1},\alpha_{2},\alpha_{3})+c (1,1,1)\in V.
\end{equation*}
By construction, \eqref{sigmatorus} implies
\begin{equation*}
    |(\alpha_{1}+c,\alpha_{2}+c,\alpha_{3}+c)|\leq \frac{2^k}{d_{\Gamma}(\vbeta)} .
    % \Longrightarrow \sum_{i=1}^{3}|\alpha_{i}+c|^{2}\eqsim \frac{2^{2k}}{d_{\Gamma}(\vbeta)^{2}} 
\end{equation*}
The triangle inequality yields
\begin{equation*}
    |\alpha_{j'}-\alpha_{j}|=|(\alpha_{j'}+c)+(-\alpha_{j}-c)|\leq |\alpha_{j'}+c|+|\alpha_{j}+c|\leq \frac{2^{k+1}}{d_{\Gamma}(\vbeta)}.
\end{equation*}
\end{proof}

In the rest of this section, to simplify the notation, we denote $W_{\vgamma.\frac{\tau}{|I|}}$ as $W_{\tau}$. We apply Cauchy Schwarz in the integration over $\alpha_{i'}$ and $\alpha_{i''}$ to estimate the left-hand side of \eqref{tentestk}
by 
\begin{equation}\label{fourten}
     \int_{W_{1}}
            \int_{I}
                \|\br{1_{E_k}K}(\valpha,\vbeta)\|_{L^{2}_{\alpha_{i'},\alpha_{i''}}}
         \cdot \left\| \prod_{j=1}^{3}(F_{j}f_{j})(\alpha_{j},\vbeta)  \right\|_{L^{2}_{\alpha_{i'},\alpha_{i''}}
        \left(Q_{\alpha_{i}}^2\right)}d\alpha_{i}d\mathcal\mu(\vbeta),
\end{equation}
where we use Lemma \ref{lem41} to restrict the domain of the last $L^2$ norm
to $Q_{a_{i}}^{2}$ with
\begin{equation*}
    Q_{\alpha_{i}}=\left[\alpha_{i}-\frac{2^{k+1}}{d_{\Gamma}(\vbeta)},\;\alpha_{i}+\frac{2^{k+1}}{d_{\Gamma}(\vbeta)} \right].
\end{equation*}
As $K$ is constant on any fiber parallel to $(1,1,1)$, the first factor 
in \eqref{fourten} is a universal constant times
\begin{equation}\label{eq410}
    \|\br{1_{E_k}K}(\valpha, \vbeta)\|_{L^{2}_{\valpha}(V)}.
\end{equation}
We estimate \eqref{eq410} via the kernel condition \eqref{kernelcond} and obtain for 
\eqref{fourten} the bound
\begin{equation}\label{foureleven}
  \lesssim   \int_{W_{1}}
            \int_{I}
               2^{-k s}\cdot d_{\Gamma}(\vbeta)
         \cdot \left\| \prod_{j=1}^{3}(F_{j}f_{j})(\alpha_{j},\vbeta)  \right\|_{L^{2}_{\alpha_{i'},\alpha_{i''}}
         \left(Q^{2}_{\alpha_{i}}\right)}d\alpha_{i}d\mathcal\mu(\vbeta).
\end{equation}
Now we split $W_{1}$ as 
\[\left(W_{1}\setminus W_{2^{k}}\right)\cup W_{2^{k}}.\]
We split \eqref{foureleven} accordingly and estimate the
pieces separately. 

Starting with the first piece, estimating the triple product by its 
sup norm, and using that length of $Q_{\alpha_{i}}$ is $\frac{2^{k+2}}{d_{\Gamma}(\vbeta)}$, we estimate this piece by
\begin{equation}\label{4eleven}
       \abs{I}
          2^{-k(1-s)}\left\|\prod_{j=1}^{3}(F_{j}f_{j})(\alpha_{j},\vbeta)\right\|_{L^{\infty}_{\alpha,\mu (\vbeta)}} \int_{W_{1}\setminus W_{2^{k}}}       
           d\mu(\vbeta).
\end{equation}
By definition \eqref{defwgt} and direct calculation via polar coordinates $r=|\vbeta-\vgamma|$, we obtain
\begin{equation}
    \int_{W_{1}\setminus W_{2^{k}}}
    \frac{d\mathcal{H}^{2}(\vbeta)}{d_{\Gamma}(\vbeta)^2}
     \lesssim \int_{W_{1}\setminus W_{2^{k}}}
    \frac{d\mathcal{H}^{2}(\vbeta)}{|\vbeta-\vgamma|^2}
      \lesssim  
    \int_{1/|I|}^{2^{k}/|I|}\frac{1}{r^{2}}\cdot rdr \lesssim k .
\end{equation}
This together with the definition of $S_j$ for 
\eqref{4eleven} gives the bound 
\begin{equation}\label{eq414}
        \leq \abs{I}k\cdot 2^{k\br{1-s}}\cdot \prod_{j=1}^{3}\|F_{j}\|_{S^{j}},
\end{equation}
which completes our estimation of the first piece of \eqref{foureleven}.

Next, to calculate the second piece, we rewrite the remaining piece of \eqref{foureleven} into $L^{2}$ average 
\begin{equation}\label{eq417}
      2^{k(1-s)}\int_{W_{2^{k}}}
            \int_{I}
           (F_{i}f_{i})(\alpha_{i},\vbeta)
      \cdot 
        \prod_{j\neq i}\left\|(F_{j}f_{j})(\alpha_{j},\vbeta)  \right\|_{\L^{2}_{\alpha_{j}}\left(Q_{\alpha_{i}}\right)}
        d\alpha_i d\mathcal\mu(\vbeta).
\end{equation}
We decompose further the domain of integration into
\begin{equation}\label{inclu415}
    W_{2^{k}}\subseteq V_i\cup V_{i'}\cup V_{i''}
\end{equation}
where
\begin{equation}
  V_{a}:=\left( W_{2^{k}}\setminus \bigsqcup_{b\neq a} U^{b}_{\vgamma}\right).
\end{equation}
The inclusion \eqref{inclu415} holds because $U_{\vgamma}^{a}\cap U_{\vgamma}^{b}=\emptyset$ for $a\neq b$.
By the triangle inequality,
it suffices to estimate each piece independently. First, we deal with the piece $V_{i'}$. The the treatment of $V_{i''}$ is similar.
    \begin{equation}\label{tentestthree}
         |I|\cdot 2^{k(1-s)}\int_{V_{i'}}
            \frac{1}{|I|}\int_{I}
           (F_{i}f_{i})(\alpha_{i},\vbeta)
        \cdot 
     \prod_{j\neq i}\left\|(F_{j}f_{j})(\alpha_{j},\vbeta)  \right\|_{\L^{2}_{\alpha_{j}}\left(Q_{\alpha_{i}}\right)}
        d\alpha_i d\mathcal\mu(\vbeta)
    \end{equation}
 We perform a H\"older's inequality over the measure $d\alpha_{i}d\mu (\vbeta)$ where we place a $L^{\infty}$ norm on $i'$-th coordinate and $L^{2}$ norm on the rest. Then \eqref{tentestthree} is dominated by

 \begin{equation*}
     |I|\cdot 2^{k(1-s)}\left(\frac{1}{|I|}\int_{V_{i'}}\int_{I} |(F_{i}f_{i})(\alpha_{i},\vbeta)|^{2}d\alpha_{i}d\mu (\vbeta)  \right)^{\frac{1}{2}}
 \end{equation*}
 \begin{equation*}
     \cdot \left\| \left\| (F_{i'}f_{i'})(\alpha_{i'},\vbeta) \right\|_{\L^{2}_{\alpha_{i'}}\left(Q_{\alpha_{i}}\right)} \right\|_{L^{\infty}_{\alpha_{i},\mu (\vbeta)}}
 \end{equation*}
 \begin{equation*}
     \cdot \left(\frac{1}{|I|}\int_{V_{i'}}\int_{I} \left\| (F_{i''}f_{i''})(\alpha_{i''},\vbeta) \right\|^{2}_{\L^{2}_{\alpha_{i''}}\left(Q_{\alpha_{i}}\right)}d\alpha_{i}d\mu (\vbeta)  \right)^{\frac{1}{2}}.
 \end{equation*}

 Then by definition of size, we can further estimate
\begin{equation}\label{eqtent421}
        \lesssim  2^{k(1-s)} |I| \|F_{i}f_{i}\|_{S^{i}}
         \cdot \prod_{j\neq i}\left\|
      \left\|(F_{j}f_{j})(\alpha_{j},\vbeta)  \right\|_{\L^{2}_{\alpha_{j}}\left(Q_{\alpha_{i}}\right)} \right\|_{S^{j}(I,\vgamma,\frac{2^{k}}{|I|})}.
\end{equation}

Next, we deal with the piece $V_{i}$. Again, We perform a H\"older's inequality over the measure $d\alpha_{i}d\mu (\vbeta)$ where we place a $L^{\infty}$ norm on $i$-th coordinate and $L^{2}$ norm on the rest. Then \eqref{tentestthree} with $V_{i'}$ replaced by $V_{i}$ is dominated by
\begin{equation*}
    |I|\cdot 2^{k(1-s)}\left\| \br{F_{i}f_{i}}(\alpha_{i},\vbeta) \right\|_{L^{\infty}_{\alpha_{i},\mu (\vbeta)}}   
\end{equation*}
\begin{equation*}
    \cdot  \left(\frac{1}{|I|}\int_{V_{i}} \int_{I}\left\| (F_{i'}f_{i'})(\alpha_{i'},\vbeta) \right\|^{2}_{\L^{2}_{\alpha_{i'}}\left(Q_{\alpha_{i}}\right)}d\alpha_{i}d\mu (\vbeta)  \right)^{\frac{1}{2}}.
\end{equation*}
\begin{equation*}
     \cdot \left(\frac{1}{|I|}\int_{V_{i}}\int_{I}\left\| (F_{i''}f_{i''})(\alpha_{i''},\vbeta) \right\|^{2}_{\L^{2}_{\alpha_{i''}}\left(Q_{\alpha_{i}}\right)}d\alpha_{i}d\mu (\vbeta)  \right)^{\frac{1}{2}}.
\end{equation*}
Again, by the definition of size, the above term can be dominated by \eqref{eqtent421}.
We may rewrite each factor in the product of \eqref{eqtent421} into
\begin{equation}\label{eqtent422}
            \left\| 
                \left\| (F_{j}f_{j})\br{\alpha+x\frac{2^{k+1}}{d_{\Gamma}(\vbeta)},\:\vbeta}
                \right\|_{\L^{2}_{x}([-1,1])} 
            \right\|_{S^{j}(I,\vgamma,\frac{2^{k}}{|I|})}.
        \end{equation}
Since size is the maximum of a $L^{2}$ quantity and a $L^{\infty}$ quantity, by Minkowski inequality, we can estimate \eqref{eqtent422} by 
\begin{equation}\label{eq421}
            \left\| 
                \left\| 
              (F_{j}f_{j})\br{\alpha +x \frac{2^{k+1}}{d_{\Gamma}(\vbeta)},\:\vbeta}
                \right\|_{S^{j}(I,\vgamma,\frac{2^{k}}{|I|})}
            \right\|_{\L^{2}_{x}([-1,1])} 
\end{equation}
Fix $x$ and consider the inner norm in \eqref{eq421}. Observe that
\begin{equation*}
T:\br{\alpha,\vbeta}\mapsto\br{\alpha+x\frac{2^{k+1}}{d_\Gamma\br{\vbeta}},\:\vbeta}
\end{equation*}
is a map that preserves measure and for $\vbeta \in W_{2^{k}}$
\begin{equation*}
     \abs{x \frac{2^{k+1}}{d_\Gamma\br{\vbeta}}}\leq  \frac{2^{k+1}}{\delta_{1}|\vbeta-\vgamma|}\leq \frac{2^{k+1}}{\delta_{1}2^{k}/|I|}\leq 2\delta_{1}^{-1}\abs{I}.
\end{equation*}
Thus we have for \(C= \br{1+4\delta^{-1}_1}\),
\begin{equation*}
   T\br{I\times W_{2^{k}} }\subseteq CI\times W_{2^{k}}  \subseteq CI\times W_{\frac{1}{C}}. 
\end{equation*}
Then \eqref{eqtent421} can be estimated by
\begin{equation}\label{eq424}
        \lesssim  \abs{I}\cdot 2^{k\br{1-s}}
            \|(F_{i}f_{i})\|_{S^{i}}
        \cdot 
        \prod_{j\neq i}
            \left\| 
                \left\| (F_{j}f_{j})(\alpha ,\vbeta)
                \right\|_{S^{j}\br{CI \times W_{\vgamma,2^{k}/|CI|}}}
            \right\|_{\L^{2}_{x}([-1,1])}. 
\end{equation}
Then by the definition of size \eqref{sizedef}, 
\begin{equation*}
         \eqref{eq424}\lesssim  \abs{I}\cdot 2^{k\br{1-s}}\prod_{j=1}^3
            \nrm{
              F_{j}f_{j}
            }_{S^j}.
\end{equation*}
Together with \eqref{eq414}, we complete the proof of Proposition \ref{propTentest}.

\section{Proof of Proposition \ref{prop_Linfty_emb}: Global Estimate}
For \(\br{\alpha,\vbeta}\in\R\times V\), we introduce the \(L^1\) normalized wave-packet 
\begin{equation}
    \varphi_{\alpha,\vbeta}^{j}:=
        \Tr_{\alpha}\Mod_{-\beta_{j}}\Dil^{1}_{d_{\Gamma}(\vbeta)^{-1}}\overline{\varphi}.
\end{equation}
Since \(\varphi\) is even, direct calculation gives
\begin{equation*}
(F_{j}f)(\alpha,\vbeta)=\left((\Mod_{\beta_{j}}\Dil_{d_{\Gamma}(\vbeta)^{-1}}^{1}\varphi) \ast f_{j}\right)(\alpha)=\left\langle f_{j},\varphi_{\alpha,\vbeta}^{j} \right\rangle.
\end{equation*}
By the definition of the global size, it suffices to show
\begin{equation*}
    \nrm{F_j f}_{S^j\br{I,\vgamma}}\lesssim \nrm{f}_{L^\infty},\quad \forall I\in\mathcal{I},\vgamma\in \Gamma.
\end{equation*}
Fix a pair \(\br{I,\vgamma}\in \mathcal{I}\times\Gamma\) and recall 
\begin{equation}\label{eq53}
    \nrm{F_j f}_{S^j\br{I,\vgamma}}:=
    \nrm{F_j f}_{L^\infty\br{I\times W_{\vgamma,1/\abs{I}}}}
    \vee
    \abs{I}^{-\frac{1}{2}}\nrm{F_jf}_{L^2_{\nu}\br{I\times \br{W_{\vgamma,1/\abs{I}}\setminus U_\vgamma^j}}}.
\end{equation}
Trivially, we have
\begin{equation}
    \abs{F_j f\br{\alpha,\vbeta}}=
    \abs{
        \ang{
            f,\varphi_{\alpha,\vbeta}^{j}
        }
    }\leq \nrm{f}_{L^\infty}.
    \label{eq_global_Linfty_holder}
\end{equation}
Hence, the \(L^\infty\) component in \eqref{eq53} is dominated by \(\nrm{f}_{L^\infty}\).

To control the remaining \(L^2\) component, we split the function \(f\) into the local part \(1_{3I}f\) and the tail part \(1_{3I^c}f\). By linearity of \(F_j\) and the triangle inequality, it suffices to obtain the following estimate for the local part
\begin{equation}
    \abs{I}^{-\frac{1}{2}}\nrm{F_j(1_{3I}f)}_{L^2_{\nu}\br{I\times \br{W_{\vgamma,1/\abs{I}}\setminus U_\vgamma^j}}}\lesssim \nrm{f}_{L^\infty}
    \label{eq_global_L2_loc}
\end{equation}
and the analogous estimate for the tail part
\begin{equation}
    \abs{I}^{-\frac{1}{2}}\nrm{F_j (1_{3I^c}f)}_{L^2_{\nu}\br{I\times \br{W_{\vgamma,1\abs{I}}\setminus U_\vgamma^j}}}\lesssim \nrm{f}_{L^\infty}.
    \label{eq_global_L2_tail}
\end{equation}

Starting with the treatment of the local part \eqref{eq_global_L2_loc}, we observe that it's enough to show the following \(L^2\) estimate
\begin{equation}
    \nrm{F_j g}_{L^2_\nu\br{\R\times \br{W_{\vgamma,0}\setminus U_\vgamma^j}}}\lesssim \nrm{g}_{L^2}.
    \label{eq_global_L2_g_est}
\end{equation}
This can be seen by taking \(g=1_{3I}f\),
\begin{equation*}
    \abs{I}^{-\frac{1}{2}}
    \nrm{
        F_j (1_{3I} f)
    }_{L^2_\nu\br{I\times \br{
        W_{\vgamma,1/\abs{I}}\setminus U_\vgamma^j
    }}}
    \leq
    \abs{I}^{-\frac{1}{2}}
    \nrm{F_j g}_{L^2_\nu\br{\R\times \br{W_{\vgamma,0}\setminus U_\vgamma^j}}}\lesssim \abs{I}^{-\frac{1}{2}}\nrm{g}_{L^2}.
\end{equation*}
The localization gives the trivial bound \(\abs{I}^{-\frac{1}{2}}\nrm{g}_{L^2}\lesssim \nrm{f}_{L^\infty}\), which finishes \eqref{eq_global_L2_loc}. Return to the proof of \eqref{eq_global_L2_g_est}. Recall that
\begin{equation*}
    F_j g\br{\alpha,\vbeta}:=
    \br{
        M_{\beta_j}
        D^1_{d_\Gamma\br{\vbeta}^{-1}}
        \varphi
    }
    \ast g\br{\alpha}.
\end{equation*}
Applying Plancherel on the spatial variable \(\alpha\) to the left-hand side of yields \eqref{eq_global_L2_g_est}
\begin{equation}\label{eq510}
   \nrm{
        \nrm{
            \br{
                T_{\beta_j} D^\infty_{d_\Gamma\br{\vbeta}}
                \widehat{\varphi}
            }\br{\xi}
            \cdot \widehat{g}\br{\xi}
        }_{L^2_{\xi}}
    }_{L^2_{\mu\br{\vbeta}}\br{W_{\vgamma,0}\setminus U_\vgamma^j}}
\end{equation}
then interchange the order of the \(L^2\) norms equates \eqref{eq510} to
\begin{equation}
    \nrm{
        \nrm{
            \br{
                T_{\beta_j} D^\infty_{d_\Gamma\br{\vbeta}}
                \widehat{\varphi}
            }\br{\xi}
        }_{L^2_{\mu\br{\vbeta}}\br{W_{\vgamma,0}\setminus U_\vgamma^j}}
        \cdot \widehat{g}\br{\xi}
    }_{L^2_{\xi}}.
\end{equation}
It remains to show 
\begin{equation}\label{eq512}
    \nrm{
        T_{\beta_j} D^\infty_{d_\Gamma\br{\vbeta}}
        \widehat{\varphi}
    }_{L^2_{\mu(\vbeta)}\br{ W_{\vgamma,0}\setminus U_\vgamma^j}}\lesssim 1.
\end{equation}        
By developing the \(L^2\) norm, the left-hand side of \eqref{eq512} equals to
\begin{equation}
    \int_{W_{\vgamma,0}\setminus U_\vgamma^j}
        \abs{
            \widehat{\varphi}
        }^2\br{\frac{\xi-\beta_j}{d_\Gamma\br{\vbeta}}}
    \frac{d\mathcal{H}^2\br{\vbeta}}{d_\Gamma\br{\vbeta}^2}.
    \label{eq_global_L2_mul_est}
\end{equation}
Recall that for \(\vbeta\in W_{\vgamma,0}\setminus U_\vgamma^j\),
\begin{equation*}
    \delta_1\abs{\beta_j-\gamma_j}\leq
    \delta_1\abs{\vbeta-\vgamma}\leq 
    d_\Gamma\br{\vbeta}\leq 
    \abs{\vbeta-\vgamma}\leq
    \frac{1}{\delta_2}\abs{\beta_j-\gamma_j}.
\end{equation*}
On the other hand, since \(\supp \widehat{\varphi}\subset B_{\frac{4\epsilon}{10}}\br{0}\),
\begin{equation*}
    \abs{
        \widehat{\varphi}
    }^2\br{\frac{\xi-\beta_j}{d_\Gamma\br{\vbeta}}}
    \neq 0
    \implies
    \abs{\xi-\beta_j}\leq \frac{4\epsilon}{10}d_\Gamma\br{\vbeta}\leq \frac{2\epsilon}{5\delta_2}\abs{\beta_j-\gamma_j}.
\end{equation*}
By triangle inequality, 
\begin{equation*}
    \abs{\beta_j-\gamma_j}-\abs{\xi-\beta_j}\leq\abs{\xi-\gamma_j}\leq\abs{\beta_j-\gamma_j}+\abs{\xi-\beta_j}.
\end{equation*}
Hence,
\begin{equation*}
    \br{1-\frac{2\epsilon}{5\delta_2}}\abs{\beta_j-\gamma_j}\leq
    \abs{\xi-\gamma_j}\leq
    \br{1+\frac{2\epsilon}{5\delta_2}}\abs{\beta_j-\gamma_j}.
\end{equation*}
This shows that \(
    \abs{\vbeta-\vgamma}\sim d_\Gamma\br{\vbeta}\sim \abs{\xi-\gamma_j}
\). As a direct consequence, we can dominate \eqref{eq_global_L2_mul_est} by
\begin{equation*}
    \nrm{\widehat{\varphi}}_{L^\infty}^2
    \cdot
    \fint_{
        \left\{
            \vbeta\in V 
        \middle\vert
            \abs{\vbeta-\vgamma}\sim \abs{\xi-\gamma_j}
        \right\}
    }
    d\mathcal{H}^2\br{\vbeta}\sim 1
\end{equation*}
and thus, verify \eqref{eq512} and complete the proof of \eqref{eq_global_L2_g_est}.

As for the tail part \eqref{eq_global_L2_tail}, we have a trivial bound
\begin{equation*}
    \abs{I}^{-\frac{1}{2}}\nrm{F_j(1_{3I^c}f)}_{L^2_{\nu}\br{I\times \br{W_{\vgamma,1/\abs{I}}\setminus U_\vgamma^j}}}
\end{equation*}
\begin{equation}\label{eq_IFL1Linfty}
    \leq
    \abs{I}^{-\frac{1}{2}}\nrm{F_j(1_{3I^c}f)}_{L^1_{\nu}\br{I\times \br{W_{\vgamma,1/\abs{I}}\setminus U_\vgamma^j}}}^{\frac{1}{2}}\cdot \nrm{F_j (1_{3I^c}f)}_{L^\infty}^{\frac{1}{2}}.
\end{equation}
By \eqref{eq_global_Linfty_holder}, we dominate \eqref{eq_IFL1Linfty} by
\begin{equation*}
    \abs{I}^{-\frac{1}{2}}\nrm{F_j(1_{3I^c}f)}_{L^1_{\nu}\br{I\times \br{W_{\vgamma,1/\abs{I}}\setminus U_\vgamma^j}}}^{\frac{1}{2}}\cdot \nrm{f}_{L^\infty}^{\frac{1}{2}}.
\end{equation*}
Therefore, to show \eqref{eq_global_L2_tail}, it suffices to show the following inequality
\begin{equation}\label{eq_IFlessf}
\abs{I}^{-1}\nrm{F_j(1_{3I^c}f)}_{L^1_{\nu}\br{I\times W_{\vgamma,1/\abs{I}}}}\lesssim \nrm{f}_{L^\infty}.
\end{equation}
We may dominate the left-hand side of \eqref{eq_IFlessf} by
\begin{equation*}
    \nrm{f}_{L^\infty}\!\!
    \int_{3I^c}
        \fint_I
            \int_{W_{\vgamma,1/\abs{I}}}
                \abs{       
                    \varphi_{\alpha,\vbeta}^j
                }\br{x}
            d\mu\br{\vbeta}
        d\alpha
    dx.
\end{equation*}
Then, it suffices to show
\begin{equation}\label{eq_global_L2_tail_abs_IIW}
    \int_{3I^c}
        \fint_I
            \int_{W_{\vgamma,1/\abs{I}}}
                \abs{       
                    \varphi_{\alpha,\vbeta}^j
                }\br{x}
            d\mu\br{\vbeta}
        d\alpha
    dx\lesssim 1.
\end{equation}
Recall again that \(\vbeta\in W_{\vgamma,1/\abs{I}}\) implies that \(d_\Gamma\br{\vbeta}\sim \abs{\vbeta-\vgamma}\) and thus,
\begin{equation*}
    \abs{\varphi_{\alpha,\vbeta}^j}\br{x}\lesssim
    \abs{\vbeta-\vgamma}\br{1+\abs{\vbeta-\vgamma}\cdot\abs{x-\alpha}}^{-N}.
\end{equation*}
By polar coordinates with center at \(\vgamma\), we have 
\begin{equation*}
    \int_{W_{\vgamma,1/\abs{I}}}
        \abs{       
            \varphi_{\alpha,\vbeta}^j
        }\br{x}
    d\mu\br{\vbeta}
    \lesssim
    \int^\infty_{1/\abs{I}}
        t\br{1+t\abs{x-\alpha}}^{-N}
    \frac{dt}{t}.
\end{equation*}
As a result, the left-hand side of \eqref{eq_global_L2_tail_abs_IIW} is dominated by
\begin{equation}
    \lesssim
    \int_{3I^c}
        \fint_I
            \int^\infty_{1/\abs{I}}
                t\br{1+t\abs{x-\alpha}}^{-N}
            \frac{dt}{t}
        d\alpha
    dx.
    \label{eq_global_L2_tail_abs_III}
\end{equation}
After a change of variable,
\begin{equation*}
    \eqref{eq_global_L2_tail_abs_III}=\int_{\Br{-\frac{3}{2},\frac{3}{2}}}
        \int_{\Br{-\frac{1}{2},\frac{1}{2}}}
            \int^\infty_1
                t\br{1+t\abs{x-\alpha}}^{-N}
            \frac{dt}{t}
        d\alpha
    dx.
\end{equation*}
It becomes apparent that the above quantity can be estimated by
\begin{equation*}
    \sim
    \int_{\Br{-\frac{3}{2},\frac{3}{2}}}
        \int_{\Br{-\frac{1}{2},\frac{1}{2}}}
            \int^\infty_1
                t^{1-N}
            \frac{dt}{t}
            \cdot
            \abs{x-\alpha}^{-N}
        d\alpha
    dx
    =
    \int_{\Br{-\frac{3}{2},\frac{3}{2}}}
        \int_{\Br{-\frac{1}{2},\frac{1}{2}}}
            \abs{x-\alpha}^{-N}
        d\alpha
    dx
    \sim 1.
\end{equation*}
This verifies \eqref{eq_global_L2_tail_abs_IIW} and completes the proof of \eqref{eq_global_L2_tail}.

\section{Proof of Proposition \ref{proptentgeo}: Geometry of Tents}
For $x_{1},x_{2} \in \mathbb{R}^{n}$, $0<r<1$, define the Apollonian circle
 \begin{equation}
        B_{r}(x_{1},x_{2}):=\left\{y \in \mathbb{R}^{n} : \frac{|y -x_{1}|}{r}<\frac{|y -x_{2}|}{1}  \right\}.
    \end{equation}
We begin with a geometric lemma concerning the relation between two Apollonian circles.
\begin{lemma}\label{appolem}
    Let $x_{0},x_{1},x_{2}\in \mathbb{R}^{n}$ and $0<r<1$. Suppose that 
    \begin{equation}\label{appoasump}
        r|x_{2}-x_{1}|\leq |x_{2}-x_{0}|-|x_{1}-x_{0}|,
    \end{equation}
    then $B_{r}(x_{0},x_{1})\subseteq B_{r}(x_{0},x_{2})$. This inclusion relation is equivalent to the fact that if $y\in \mathbb{R}^{n}$ satisfies
    \begin{equation*}
         r|y -x_{2}|\leq |y -x_{0}|,
    \end{equation*}
    it must also satisfy
    \begin{equation*}
         r|y -x_{1}|\leq |y -x_{0}|.
    \end{equation*}
\end{lemma}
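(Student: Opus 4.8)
The plan is to identify each Apollonian region $B_r(x_0,x_\ast)$, $\ast\in\{1,2\}$, with an honest Euclidean ball, and then to reduce the claimed inclusion to the elementary criterion for nesting of two balls. I would first dispose of degeneracies: if $x_0=x_1$ then $B_r(x_0,x_1)=\varnothing$ and there is nothing to prove, while if $x_0=x_2$ then \eqref{appoasump} reads $r|x_2-x_1|\le -|x_1-x_0|\le 0$, forcing $x_0=x_1=x_2$, again trivial. So from now on I may assume $p:=|x_1-x_0|>0$ and $q:=|x_2-x_0|>0$; note also $1-r^2>0$ since $0<r<1$.

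For any $x_\ast\ne x_0$, squaring and completing the square rewrites the condition $|y-x_0|<r|y-x_\ast|$ as $|y-c_\ast|^2<\rho_\ast^2$, where
\begin{equation*}
    c_\ast:=\frac{x_0-r^2 x_\ast}{1-r^2},\qquad \rho_\ast:=\frac{r\,|x_\ast-x_0|}{1-r^2}.
\end{equation*}
Hence $B_r(x_0,x_\ast)$ is the open Euclidean ball of radius $\rho_\ast$ about $c_\ast$. Specializing to $x_\ast=x_1$ and $x_\ast=x_2$, a direct computation gives
\begin{equation*}
    |c_1-c_2|=\frac{r^2}{1-r^2}\,|x_2-x_1|,\qquad \rho_1=\frac{r}{1-r^2}\,p,\qquad \rho_2=\frac{r}{1-r^2}\,q.
\end{equation*}

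Multiplying the hypothesis \eqref{appoasump} by the positive number $\frac{r}{1-r^2}$ now gives exactly $|c_1-c_2|+\rho_1\le\rho_2$, which by the triangle inequality forces the ball of radius $\rho_1$ about $c_1$ to lie inside the ball of radius $\rho_2$ about $c_2$: if $|y-c_1|<\rho_1$ then $|y-c_2|\le |y-c_1|+|c_1-c_2|<\rho_1+|c_1-c_2|\le\rho_2$. This gives $B_r(x_0,x_1)\subseteq B_r(x_0,x_2)$, and the pointwise formulation in the lemma is simply the contrapositive, since $y$ lies outside $B_r(x_0,x_\ast)$ precisely when $r|y-x_\ast|\le|y-x_0|$.

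I do not anticipate a genuine obstacle. The one pitfall is that a direct attempt using only the triangle inequality among $|y-x_0|,|y-x_1|,|y-x_2|$ loses an additive term of size $r|x_1-x_2|$ and fails to close; the exact content of \eqref{appoasump} is captured by the nesting inequality $|c_1-c_2|+\rho_1\le\rho_2$, so routing through the explicit Apollonian balls — the completion of the square and the elementary ball-nesting criterion, both routine — is what makes the argument sharp.
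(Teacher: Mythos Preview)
Your proof is correct and follows essentially the same approach as the paper: both identify the Apollonian regions as Euclidean balls with centers $\frac{x_0-r^2 x_\ast}{1-r^2}$ and radii $\frac{r|x_\ast-x_0|}{1-r^2}$, then verify the nesting criterion $|c_1-c_2|\le \rho_2-\rho_1$ directly from \eqref{appoasump}. Your handling of the degenerate cases and the explicit contrapositive are minor elaborations not present in the paper, but the core argument is the same.
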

\begin{proof}
    By direct calculation, we have the center $C(B_{r}(x_{0},x_{i}))$ and radius $R(B_{r}(x_{0},x_{i}))$ of these two Apollonian circles
    \begin{equation}
        C(B_{r}(x_{0},x_{i}))=\frac{x_{0}-r^{2}x_{i}}{1-r^{2}},\quad R(B_{r}(x_{0},x_{i}))=\frac{r|x_{0}-x_{i}|}{1-r^{2}}.
    \end{equation}
Then by assumption \eqref{appoasump},
\begin{equation*}
    R(B_{r}(x_{0},x_{2}))-R(B_{r}(x_{0},x_{1}))=\frac{r(|x_{2}-x_{0}|-|x_{1}-x_{0}|)}{1-r^{2}}
\end{equation*}
\begin{equation*}
     \geq \frac{r^{2}|x_{2}-x_{1}|}{1-r^{2}}
    =| C(B_{r}(x_{0},x_{2}))-C(B_{r}(x_{0},x_{1}))|.
\end{equation*}
Hence $B_{r}(x_{0},x_{1})\subseteq B_{r}(x_{0},x_{2})$.
\end{proof}

\begin{figure}[h]
    \centering
\includegraphics[width=0.4\textwidth,height=4.5cm]{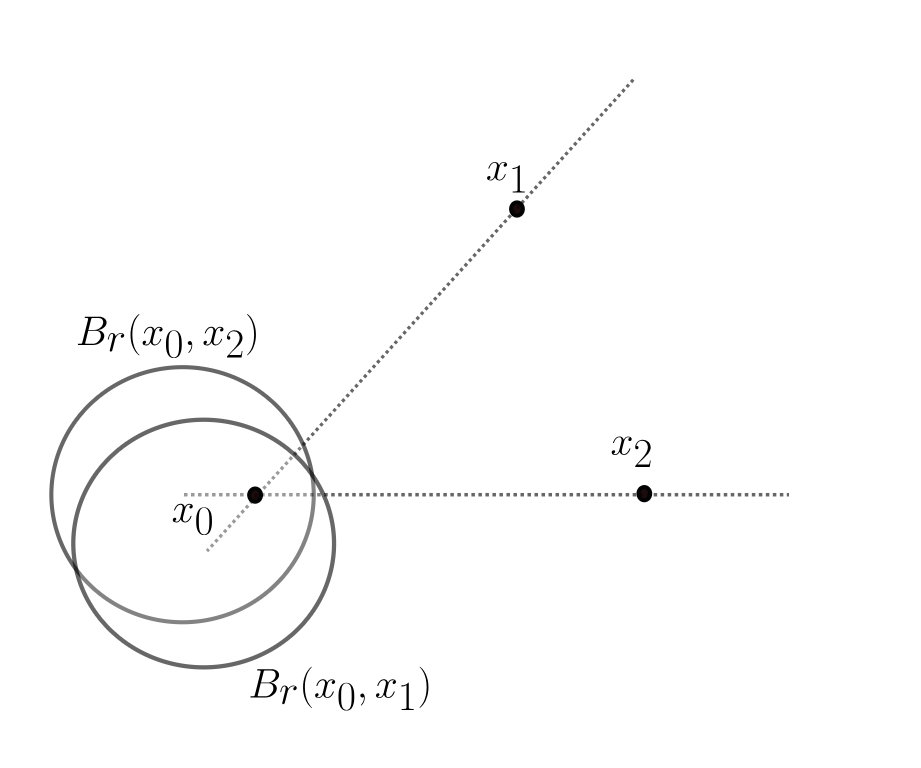}
    \caption{The inclusion relation between two Apollonian circles.}
    \label{fig:enter-label}
\end{figure}

\begin{lemma}\label{UinW}
    For $\vgamma \in \Gamma$ and $j=1,2,3$, we have the inclusion relation
    \begin{equation}
        U_{\vgamma}^{j}\subseteq W_{\vgamma , 0}.
    \end{equation}
\end{lemma}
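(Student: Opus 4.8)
The plan is to unwind the definition of $W_{\vgamma,0}$ and reduce the inclusion to a distance lower bound, which I will then establish by an elementary planar angle comparison. Throughout we are in the case $N=1$, so that Lemma \ref{lemawayfromortho} is available. Fix $j\in\{1,2,3\}$, $\vgamma\in\Gamma$ and $\vbeta\in U_{\vgamma}^{j}$. By \eqref{defwgt}, membership $\vbeta\in W_{\vgamma,0}$ amounts to $\delta_{1}\abs{\vbeta-\vgamma}\le d_{\Gamma}(\vbeta)$, and since $d_{\Gamma}(\vbeta)=\inf_{\vgamma'\in\Gamma}\abs{\vbeta-\vgamma'}$ it suffices to show $\abs{\vbeta-\vgamma'}\ge\delta_{1}\abs{\vbeta-\vgamma}$ for every $\vgamma'\in\Gamma$. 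Because $\delta_{1}=\sin\theta_{1}\le 1$, this is trivial when $\vgamma'=\vgamma$ or when $\vbeta=\vgamma$, so from now on I assume $\vgamma'\neq\vgamma$ and $\vbeta\neq\vgamma$.

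Next I would rewrite the two available hypotheses as angle bounds inside the plane $V$. Let $u$ be the unit vector in $V$ along $P_{V}e_{j}$ and let $\ell:=\R u$. For $v\in V$ we have $\ang{v,e_{j}}=\ang{v,P_{V}e_{j}}=\tfrac{\sqrt 6}{3}\ang{v,u}$, since $\abs{P_{V}e_{j}}=\tfrac{\sqrt6}{3}$. Using $\delta_{2}=\tfrac{\sqrt6}{3}\cos(\tfrac{\pi}{3}+\theta_{0}+\theta_{1})$, condition \eqref{Uangle} for $\vbeta\in U_{\vgamma}^{j}$ reads $\abs{\ang{\vbeta-\vgamma,u}}\le\cos(\tfrac{\pi}{3}+\theta_{0}+\theta_{1})\,\abs{\vbeta-\vgamma}$; as $0<\tfrac{\pi}{3}+\theta_{0}+\theta_{1}<\tfrac{\pi}{2}$ by the choice of $\theta_{1}$, this says exactly that the angle between the line spanned by $\vbeta-\vgamma$ and $\ell$ is at least $\tfrac{\pi}{3}+\theta_{0}+\theta_{1}$. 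On the other hand, Lemma \ref{lemawayfromortho} gives $\abs{\ang{\vgamma'-\vgamma,e_{j}}}\ge\delta_{0}\abs{\vgamma'-\vgamma}$ with $\delta_{0}=\tfrac{\sqrt6}{3}\cos(\theta_{0}+\tfrac{\pi}{3})$, i.e. $\abs{\ang{\vgamma'-\vgamma,u}}\ge\cos(\theta_{0}+\tfrac{\pi}{3})\,\abs{\vgamma'-\vgamma}$; since $\theta_{0}+\tfrac{\pi}{3}<\tfrac{\pi}{2}$, this says that the angle between the line $L$ through $\vgamma$ and $\vgamma'$ and the line $\ell$ is at most $\theta_{0}+\tfrac{\pi}{3}$.

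To finish, I would combine these. The angle between two lines through the origin of the two–dimensional space $V$ defines a metric on $\R P^{1}$ taking values in $[0,\tfrac{\pi}{2}]$, so by the (reverse) triangle inequality the angle $\psi$ between $\vbeta-\vgamma$ and $L$ satisfies $\psi\ge(\tfrac{\pi}{3}+\theta_{0}+\theta_{1})-(\theta_{0}+\tfrac{\pi}{3})=\theta_{1}$. Since $\psi\in[\theta_{1},\tfrac{\pi}{2}]$ and $\sin$ is increasing there, $\sin\psi\ge\sin\theta_{1}=\delta_{1}$. As $\vgamma'$ lies on $L$ and $\vgamma\in L$, dropping a perpendicular from $\vbeta$ to $L$ gives
\[
\abs{\vbeta-\vgamma'}\ \ge\ \operatorname{dist}(\vbeta,L)\ =\ \abs{\vbeta-\vgamma}\sin\psi\ \ge\ \delta_{1}\abs{\vbeta-\vgamma}.
\]
Taking the infimum over $\vgamma'\in\Gamma$ yields $d_{\Gamma}(\vbeta)\ge\delta_{1}\abs{\vbeta-\vgamma}$, hence $\vbeta\in W_{\vgamma,0}$, which is the asserted inclusion.

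The only step carrying real content is the angle comparison in the last paragraph: it relies on the planarity of $V$ and on $\theta_{1}=\tfrac{\pi}{18}-\tfrac{\theta_{0}}{3}$ being chosen precisely so that the ``nearly perpendicular'' angle $\tfrac{\pi}{3}+\theta_{0}+\theta_{1}$ stays strictly below $\tfrac{\pi}{2}$ while still exceeding the ``nearly parallel'' angle $\theta_{0}+\tfrac{\pi}{3}$ by the positive margin $\theta_{1}$. If one prefers to avoid the language of the line-angle metric, the same bound follows by picking an orthonormal basis $\{u,w\}$ of $V$, expanding $\vbeta-\vgamma$ and $\vgamma'-\vgamma$ in these coordinates, deducing $\abs{\ang{\vbeta-\vgamma,\vgamma'-\vgamma}}\le\cos\theta_{1}\,\abs{\vbeta-\vgamma}\,\abs{\vgamma'-\vgamma}$ from the componentwise estimates above, and inserting this into the identity $\operatorname{dist}(\vbeta,L)^{2}=\abs{\vbeta-\vgamma}^{2}-\ang{\vbeta-\vgamma,\vgamma'-\vgamma}^{2}/\abs{\vgamma'-\vgamma}^{2}$.
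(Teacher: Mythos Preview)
Your proof is correct. Both your argument and the paper's rest on the same geometric fact---that for $\vbeta\in U_\vgamma^j$ and $\vgamma'\in\Gamma\setminus\{\vgamma\}$ the angle at $\vgamma$ between $\vbeta-\vgamma$ and $\vgamma'-\vgamma$ is at least $\theta_1$, obtained by combining the $U_\vgamma^j$ condition with Lemma~\ref{lemawayfromortho}---but the packaging differs. You argue the direct implication: from $\psi\geq\theta_1$ you read off $\abs{\vbeta-\vgamma'}\geq\operatorname{dist}(\vbeta,L)=\abs{\vbeta-\vgamma}\sin\psi\geq\delta_1\abs{\vbeta-\vgamma}$. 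The paper instead argues the contrapositive using the Apollonian description $V\setminus W_{\vgamma,0}=\bigcup_{\vgamma'\in\Gamma}B_{\delta_1}(\vgamma',\vgamma)$: for $\vbeta$ inside such a ball the explicit center--radius formula shows the angle at $\vgamma$ between $\overline{\vgamma\vbeta}$ and $\overline{\vgamma\vgamma'}$ is at most $\arcsin\delta_1=\theta_1$, whence the angle to $P_Ve_j$ is at most $\tfrac{\pi}{3}+\theta_0+\theta_1$ and $\vbeta\notin U_\vgamma^j$. Your route is slightly more self-contained since it bypasses the Apollonian circle computations; the paper's route reuses the Apollonian language already set up in that section for the surrounding lemmas.
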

\begin{proof}
Notice that
\begin{equation}
    W_{\vgamma,0}=V\setminus \bigcup_{\vgamma' \in \Gamma}B_{\delta_{1}}(\vgamma' ,\vgamma).
\end{equation}
For $\vbeta \in  \underset{\vgamma' \in \Gamma}{\bigcup}B_{\delta_{1}}(\vgamma' ,\vgamma)$, there is $\vgamma' \in \Gamma$ such that $\vbeta \in B_{\delta_{1}}(\vgamma' ,\vgamma)$. Recall that $B_{\delta_{1}}(\vgamma' ,\vgamma)$ is a ball with center
\[
C(B_{\delta_{1}}(\vgamma' ,\vgamma))=\frac{\vgamma'-\delta_{1}^{2}\vgamma}{1-\delta_{1}^{2}}=\vgamma +\frac{\vgamma'-\vgamma}{1-\delta_{1}^{2}}
\]
and radius
\[
R(B_{\delta_{1}}(\vgamma' ,\vgamma))=\delta_{1}\cdot \frac{\vgamma'-\vgamma}{1-\delta_{1}^{2}}.
\]
The angle between $\overline{\vgamma \: \vbeta}$ and $\overline{\vgamma \:C(B_{\delta_{1}}(\vgamma' ,\vgamma))}$ is at most 
\[
\operatorname{arcsin}\left( \frac{C(B_{\delta_{1}}(\vgamma' ,\vgamma))-\vgamma}{R(B_{\delta_{1}}(\vgamma' ,\vgamma))}\right)=\operatorname{arcsin}\delta_{1}=\theta_{1}.
\]
Since $\vgamma, \vgamma' \in \Gamma$, by Lemma \ref{lemawayfromortho}, the angle between $\overline{\vgamma \:\vgamma'}$ and $P_{V}e_{j}$ is at most $\frac{\pi}{3}+\theta_{0}$. 
Using the fact that $\vgamma ,\vgamma', C(B_{\delta_{1}}(\vgamma' ,\vgamma))$ are on the same line, we conclude that the angle between \(\overline{\vgamma\vbeta}\) and \(P_{V}e_{j}\) is at most \(\frac{\pi}{3}+\theta_0+\theta_1\). Finally, we recall the definition \eqref{Uangle} of $U_{\gamma}^{j}$ and obtain $U_{\vgamma}^{j}\subseteq W_{\vgamma ,0}$.
\end{proof}
\begin{lemma}\label{pres_order_appo}
    For $\vgamma, \vgamma', \vgamma'' \in \Gamma$. Suppose we have an order relation $\vgamma_{j}\leq {\vgamma'}_{j} \leq {\vgamma''}_{j}$ in some direction $j\in \{1,2,3\}$, then the order is either preserved $\vgamma_{i}\leq {\vgamma'}_{i} \leq {\vgamma''}_{i}$ or  reversed ${\vgamma''}_{i} \leq {\vgamma'}_{i} \leq {\vgamma}_{i}$ in other directions $i\neq j$, $i\in \{1,2,3\}$. Furthermore, $\vgamma, \vgamma', \vgamma''$ satisfy the condition \eqref{appoasump},
\begin{equation}
    \delta_{1}|\vgamma''-\vgamma'|\leq |\vgamma''-\vgamma|- |\vgamma'-\vgamma|.
\end{equation}
\end{lemma}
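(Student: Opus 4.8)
The plan is to first establish a \emph{sign-pattern} property, after which both claims fall out by elementary case analysis. Write $j_0$ for the index attached to $\Gamma$ by the hypothesis (recall $N=1$, so there is a single such index; thus $\vgamma-\vgamma'\in\mathcal K_{j_0}(\theta_0)$ for all distinct $\vgamma,\vgamma'\in\Gamma$).

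\emph{Sign pattern:} for distinct $\vgamma,\vgamma'\in\Gamma$, writing $\vv=\vgamma-\vgamma'=(v_1,v_2,v_3)$, all three coordinates are nonzero and $v_{j_0}$ has the sign opposite to that of $v_i$ for each $i\neq j_0$. The first assertion is Lemma~\ref{lemawayfromortho} (it gives $|\langle\vv,e_i\rangle|\ge\delta_0|\vv|>0$ for every $i$, and $\delta_0>0$ since $\theta_0<\tfrac\pi6$). For the second, suppose toward a contradiction that $v_{j_0}$ had the same sign as $v_k$ for some $k\neq j_0$; then $|v_l|=|v_{j_0}|+|v_k|$ for the remaining index $l$, so putting $a:=|v_{j_0}|$ and $b:=|v_k|$, the relation $v_1+v_2+v_3=0$ gives $|\vv|^2=a^2+b^2+(a+b)^2=2(a^2+ab+b^2)$, while \eqref{stay_in_one_cond} together with $|P_Ve_{j_0}|=\tfrac{\sqrt6}{3}$ gives $a^2>\tfrac23|\vv|^2\cos^2\theta_0=\tfrac43(a^2+ab+b^2)\cos^2\theta_0$. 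Dividing by $a^2\cos^2\theta_0$ yields $\tfrac{3}{4\cos^2\theta_0}>\tfrac{a^2+ab+b^2}{a^2}\ge1$, i.e. $\cos^2\theta_0<\tfrac34$, contradicting $\theta_0<\tfrac\pi6$.

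Next I would dispose of degenerate configurations: by the same application of Lemma~\ref{lemawayfromortho}, $\gamma_j=\gamma'_j$ forces $\vgamma=\vgamma'$ and $\gamma'_j=\gamma''_j$ forces $\vgamma'=\vgamma''$, so the hypothesis $\gamma_j\le\gamma'_j\le\gamma''_j$ leaves exactly two cases: either $\gamma_j<\gamma'_j<\gamma''_j$ with the three points pairwise distinct, or one of $\vgamma=\vgamma'$, $\vgamma'=\vgamma''$ holds. In the latter situations both assertions are immediate — the order statement because any two reals are comparable, and the distance inequality because $\delta_1<1$ and one of $|\vgamma-\vgamma'|$, $|\vgamma''-\vgamma'|$ vanishes while the other equals $|\vgamma''-\vgamma|$. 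So assume the three points pairwise distinct; set $\vu:=\vgamma'-\vgamma$ and $\vv:=\vgamma''-\vgamma'$, both nonzero elements of $\mathcal K_{j_0}(\theta_0)$ with $\vu+\vv=\vgamma''-\vgamma$, and note $u_j>0$, $v_j>0$. Applying the sign pattern to $\vu$ and to $\vv$: if $j=j_0$ then $u_{j_0},v_{j_0}>0$ forces $u_i<0$, $v_i<0$ for the two indices $i\neq j_0$, so the order is reversed in each such direction; if $j\neq j_0$ then $u_j,v_j>0$ forces $u_{j_0}<0$, $v_{j_0}<0$ and $u_k>0$, $v_k>0$ for the third index $k$, so the order is reversed in direction $j_0$ and preserved in direction $k$. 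In every case the order is preserved or reversed in each direction $i\neq j$, which is the first part; moreover in all cases $\vu$ and $\vv$ have the \emph{same} sign pattern, so $u_iv_i\ge0$ for all $i$ and $u_{j_0}v_{j_0}=|u_{j_0}|\,|v_{j_0}|$.

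For the distance inequality, combining $u_iv_i\ge0$ with \eqref{stay_in_one_cond} applied to $\vu$ and $\vv$,
\[
\langle\vu,\vv\rangle\ \ge\ u_{j_0}v_{j_0}\ =\ |u_{j_0}|\,|v_{j_0}|\ \ge\ \Big(\tfrac{\sqrt6}{3}\Big)^{2}\cos^2\theta_0\,|\vu|\,|\vv|\ =\ \tfrac23\cos^2\theta_0\,|\vu|\,|\vv|\ >\ 0 ,
\]
so by Cauchy--Schwarz (and $|\vu|>0$),
\[
|\vgamma''-\vgamma|=|\vu+\vv|\ \ge\ \frac{\langle\vu+\vv,\vu\rangle}{|\vu|}\ =\ |\vu|+\frac{\langle\vv,\vu\rangle}{|\vu|}\ \ge\ |\vu|+\tfrac23\cos^2\theta_0\,|\vv| .
\]
Finally $\theta_0<\tfrac\pi6$ gives $\tfrac23\cos^2\theta_0>\tfrac23\cdot\tfrac34=\tfrac12$, whereas $0\le\theta_1\le\tfrac\pi{18}<\tfrac\pi6$ gives $\delta_1=\sin\theta_1<\sin\tfrac\pi6=\tfrac12$, so $|\vgamma''-\vgamma|\ge|\vgamma'-\vgamma|+\delta_1|\vgamma''-\vgamma'|$, which rearranges to the asserted inequality. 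The genuinely delicate step is the sign pattern — proving that $j_0$ is precisely the coordinate of opposite sign — since this is where the strict constraint $\theta_0<\tfrac\pi6$ must be exploited; the remaining steps are routine bookkeeping and a crude Cauchy--Schwarz bound.
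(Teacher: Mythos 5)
Your proof is correct and follows essentially the same strategy as the paper's: establish that $\vgamma-\vgamma'$ and $\vgamma''-\vgamma'$ lie in opposite octants (your "sign pattern", which the paper asserts directly from \eqref{stay_in_one_cond} after translating $\vgamma'$ to the origin), deduce the order statement, and then convert the resulting quantitative near-collinearity of $\vgamma'-\vgamma$ and $\vgamma''-\vgamma'$ into the distance inequality using that the relevant constant exceeds $\tfrac12\ge\delta_1$. The only differences are cosmetic: you prove the octant claim by an explicit coordinate contradiction and finish with a Cauchy--Schwarz projection bound $\langle\vu,\vv\rangle\ge\tfrac23\cos^2\theta_0\,|\vu|\,|\vv|$, whereas the paper uses the law of cosines with $\langle\vgamma-\vgamma',\vgamma''-\vgamma'\rangle\le-\tfrac12|\vgamma-\vgamma'|\,|\vgamma''-\vgamma'|$.
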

\begin{proof}
    Translate $\vgamma'$ to origin. By condition \eqref{stay_in_one_cond}, we have $\vgamma$ and $\vgamma''$ in two octants diagonal to each other. This implies that in \(i\)-th direction, the order is either preserved or reversed. Notice that by \eqref{stay_in_one_cond}, we have 
    \[
   \langle \vgamma-\vgamma',\vgamma''-\vgamma'\rangle \leq -\frac{1}{2}|\vgamma-\vgamma'|\cdot |\vgamma''-\vgamma'|.
    \]
Then
\begin{equation*}
    |\vgamma''-\vgamma|^{2}=|\vgamma'-\vgamma|^{2}+|\vgamma''-\vgamma'|^{2}-2\langle \vgamma-\vgamma',\vgamma''-\vgamma'\rangle 
\end{equation*}
\begin{equation*}
     \geq |\vgamma'-\vgamma|^{2}+|\vgamma''-\vgamma'|^{2}+|\vgamma'-\vgamma|\cdot |\vgamma''-\vgamma'|
\end{equation*}
\begin{equation*}
     \geq \left(|\vgamma'-\vgamma|+\frac{1}{2} |\vgamma''-\vgamma'|\right)^{2}.
\end{equation*}
Take square root on both sides and by $\delta_{1}\leq \frac{1}{2}$, we have the desired result.
\end{proof}
Next, we prove the first statement in Proposition \ref{proptentgeo}. 
\begin{proof}[Proof of Proposition \ref{proptentgeo} (1). ]
For $\vbeta \in V$, since $\Gamma$ is closed, there exists a point  $\vgamma(\vbeta)=\br{\gamma(\vbeta)_{1},\gamma(\vbeta)_{2},\gamma(\vbeta)_{3}}$ on the singularity $\Gamma$ such that $|\vbeta-\vgamma(\vbeta)|=d_{\Gamma}(\vbeta)$. Suppose $\vbeta \in W_{\vgamma'' ,t}$. First, we show that $\vbeta \notin B(\vgamma,\delta_1 t)$. The argument for $\vbeta\notin B(\vgamma',\delta_1 t)$ is the same. By \eqref{singcond} and \eqref{meshcond},
\begin{equation}
    |\vgamma''-\vgamma|\leq \frac{1}{\delta_{0}}|{\gamma''}_{j}-\gamma_{j}|\leq (1-\delta_1)t.\label{eq_gg'g''inbetween}
\end{equation}
Then we have
\begin{equation*}
    |\vbeta -\vgamma|\geq |\vbeta -\vgamma''|-|\vgamma''-\vgamma|\geq t-(1-\delta_1)t=\delta_1 t.
\end{equation*}
Second, we show either $\delta_{1}|\vbeta -\vgamma|\leq d_{\Gamma}(\vbeta)$ or  $\delta_{1}|\vbeta -\vgamma'|\leq d_{\Gamma}(\vbeta)$. According to the location of $\gamma (\vbeta)_{j}$, we divide into three cases: $(1) :\gamma_{j}\leq \gamma (\vbeta)_{j}\leq {\gamma'}_{j}$, $(2) :\gamma (\vbeta)_{j}<\gamma_{j}$, $(3) :{\gamma'}_{j}<\gamma (\vbeta)_{j}$.
For case \((1)\), via an augment similar to \eqref{eq_gg'g''inbetween} with \(\vgamma''\) replaced by \(\vgamma\br{\vbeta}\) and the assumption $\vbeta \in W_{\vgamma'',t}$, we obtain
\begin{equation*}
    |\vbeta -\gamma|\leq |\vbeta- \vgamma (\vbeta)|+|\vgamma (\vbeta)-\vgamma|\leq d_{\Gamma}(\vbeta)+(1-\delta_1)t\leq \frac{1}{\delta_1} d_{\Gamma}(\vbeta).
\end{equation*}
% Then by \eqref{rhorange}, we have $1+\frac{1-\delta_1}{ \delta_{1}}\leq \frac{1}{\delta_{1}}$. 

Case \((2)\) and \((3)\) are symmetric. Hence, we only prove the case \((2)\). In this case, $\vgamma (\vbeta), \vgamma, \vgamma''$ satisfy the relation
\begin{equation*}
   \gamma (\vbeta)_{j}\leq \gamma_{j}\leq {\gamma''}_{j}.
\end{equation*}
By Lemma \ref{pres_order_appo},
\begin{equation*}
    \delta_{1} |\vgamma'' -\vgamma|\leq |\vgamma'' -\vgamma (\vbeta)|-|\vgamma -\vgamma (\vbeta)|.
\end{equation*}
Then by Lemma \ref{appolem}, 
\begin{equation*}
    \br{
        \: \delta_{1}|\vbeta -\vgamma''| \leq |\vbeta -\vgamma (\vbeta)| \:
    }
    \implies
    \br{
        \: \delta_{1}|\vbeta -\vgamma| \leq |\vbeta -\vgamma (\vbeta)|  \:
    }.
\end{equation*}
Since $\vbeta \in W_{\gamma'' ,t}$,
\begin{equation*}
    \delta_{1}|\vbeta -\vgamma''|\leq d_{\Gamma}(\vbeta)=|\vbeta -\vgamma (\vbeta)|.
\end{equation*}
We then have
\begin{equation*}
       \delta_{1}|\vbeta -\vgamma|\leq |\vbeta -\vgamma (\vbeta)|=d_{\Gamma}(\vbeta)
\end{equation*}
and complete the proof of statement \textit{(1)} in Proposition \ref{proptentgeo}.
\end{proof}

Before proving statement \textit{(2)} in Proposition \ref{proptentgeo}, we introduce a Lemma.
\begin{lemma}\label{rhslemma}
    We have the inclusion
    \begin{equation}
        W_{\vgamma',0}\setminus W_{\vgamma,t}\subseteq B_{t}(\vgamma)\cup \left( W_{\vgamma' ,0}\setminus U_{\vgamma}^{j}\right)^{>\gamma_{j}},
    \end{equation}
    where $\left( W_{\vgamma' ,0}\setminus U_{\vgamma}^{j}\right)^{>\gamma_{j}}$ consists of all the points $\vbeta$ in $W_{\vgamma' ,0}\setminus U_{\vgamma}^{j}$ with $\beta_{j}>\gamma_{j}$.
\end{lemma}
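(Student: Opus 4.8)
The plan is to argue pointwise. Fix \(\vbeta\in W_{\vgamma',0}\setminus W_{\vgamma,t}\). We may assume \(\vgamma\neq\vgamma'\): otherwise the second defining inequality of \(W_{\vgamma,t}\) coincides with the one defining \(W_{\vgamma',0}=W_{\vgamma,0}\), so membership in \(W_{\vgamma,0}\setminus W_{\vgamma,t}\) forces \(|\vbeta-\vgamma|<t\) and hence \(\vbeta\in B_t(\vgamma)\) directly. By \eqref{singcond} we then also have \(\gamma_j\neq\gamma_j'\), and we treat the relevant regime \(\gamma_j<\gamma_j'\). Since \(\vbeta\notin W_{\vgamma,t}\), one of the two inequalities defining \(W_{\vgamma,t}\) fails: either \(|\vbeta-\vgamma|<t\), in which case \(\vbeta\in B_t(\vgamma)\) and there is nothing to prove, or \(|\vbeta-\vgamma|\geq t\) together with \(\delta_1|\vbeta-\vgamma|>d_\Gamma(\vbeta)\). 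In the latter case, as \(\vbeta\in W_{\vgamma',0}\) is already part of the hypothesis, it remains only to show \(\vbeta\notin U_\vgamma^j\) and \(\beta_j>\gamma_j\).

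First I would pick a nearest point \(\vgamma(\vbeta)\in\Gamma\) with \(|\vbeta-\vgamma(\vbeta)|=d_\Gamma(\vbeta)\), noting \(\vgamma(\vbeta)\neq\vgamma\) (otherwise \(\delta_1|\vbeta-\vgamma|>|\vbeta-\vgamma|\) would force \(\delta_1>1\)). The inequality \(\delta_1|\vbeta-\vgamma|>|\vbeta-\vgamma(\vbeta)|\) says exactly that \(\vbeta\) lies in the Apollonian ball \(B_{\delta_1}(\vgamma(\vbeta),\vgamma)\). Using the center and radius formulas from the proof of Lemma \ref{appolem} (as already exploited in Lemma \ref{UinW}), this ball, seen from \(\vgamma\), is contained in the cone of half-angle \(\arcsin\delta_1=\theta_1\) about the direction \(\vgamma(\vbeta)-\vgamma\); since \(\vgamma\) lies outside the ball and \(\vbeta\) strictly inside it, \(\vbeta-\vgamma\) makes an angle strictly less than \(\theta_1\) with \(\vgamma(\vbeta)-\vgamma\). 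On the other hand \(\vgamma,\vgamma(\vbeta)\in\Gamma\), so \eqref{singcond} gives that \(\vgamma(\vbeta)-\vgamma\) makes an angle at most \(\frac{\pi}{3}+\theta_0\) with the line spanned by \(P_{V}e_j\). Since \(\frac{\pi}{3}+\theta_0+\theta_1<\frac{\pi}{2}\), adding these estimates shows \(\vbeta-\vgamma\) makes an angle strictly less than \(\frac{\pi}{3}+\theta_0+\theta_1\) with the line spanned by \(P_{V}e_j\); using \(|P_{V}e_j|=\frac{\sqrt6}{3}\) and the definition \(\delta_2=\frac{\sqrt6}{3}\cos(\frac{\pi}{3}+\theta_0+\theta_1)\), this translates into \(|\langle\vbeta-\vgamma,e_j\rangle|>\delta_2|\vbeta-\vgamma|\), that is, \(\vbeta\notin U_\vgamma^j\).

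The same angular bookkeeping shows that \(\vbeta-\vgamma\) and \(\vgamma(\vbeta)-\vgamma\) both lie within angle \(<\frac{\pi}{2}\) of the same one of \(\pm P_{V}e_j\) (they are within \(\theta_1\) of each other, and each is within \(\frac{\pi}{3}+\theta_0+\theta_1<\frac{\pi}{2}\) of that line, while \(2(\frac{\pi}{3}+\theta_0)<\pi\) forbids proximity to both rays). Hence \(\langle\vbeta-\vgamma,e_j\rangle\) and \(\langle\vgamma(\vbeta)-\vgamma,e_j\rangle\) have the same sign, so it suffices to prove \(\gamma(\vbeta)_j>\gamma_j\). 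Suppose instead \(\gamma(\vbeta)_j<\gamma_j\) (equality being excluded by \eqref{singcond} since \(\vgamma(\vbeta)\neq\vgamma\)). Then \(\gamma(\vbeta)_j<\gamma_j<\gamma_j'\) is a monotone chain, so Lemma \ref{pres_order_appo} applied to \(\vgamma(\vbeta),\vgamma,\vgamma'\) yields \(\delta_1|\vgamma'-\vgamma|\leq|\vgamma'-\vgamma(\vbeta)|-|\vgamma-\vgamma(\vbeta)|\), and then Lemma \ref{appolem} gives \(B_{\delta_1}(\vgamma(\vbeta),\vgamma)\subseteq B_{\delta_1}(\vgamma(\vbeta),\vgamma')\). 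But \(\vbeta\) lies in the left-hand ball, whereas \(\vbeta\in W_{\vgamma',0}\) forces \(\delta_1|\vbeta-\vgamma'|\leq d_\Gamma(\vbeta)=|\vbeta-\vgamma(\vbeta)|\), that is, \(\vbeta\notin B_{\delta_1}(\vgamma(\vbeta),\vgamma')\) — a contradiction. Therefore \(\gamma(\vbeta)_j>\gamma_j\), hence \(\beta_j>\gamma_j\), completing the argument.

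The cone estimate delivering \(\vbeta\notin U_\vgamma^j\) is essentially a repackaging of the geometry already assembled for Lemma \ref{UinW}, and I do not expect difficulty there. The main obstacle is the sign statement \(\beta_j>\gamma_j\): it requires converting the ordering of \(j\)-th coordinates along \(\Gamma\) (via Lemma \ref{pres_order_appo}) into an inclusion of Apollonian balls (via Lemma \ref{appolem}) that is incompatible with \(\vbeta\in W_{\vgamma',0}\), and it is precisely at this point that the ordering \(\gamma_j<\gamma_j'\) of the two Apollonian centers is used in an essential way.
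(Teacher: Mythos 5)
Your proof is correct; it relies on exactly the same toolkit as the paper (the Apollonian inclusion of Lemma \ref{appolem}, the ordering inequality of Lemma \ref{pres_order_appo}, and the tangent-cone computation underlying Lemma \ref{UinW}), but it reorganizes the case analysis in a genuinely different way. The paper splits into four cases according to the position of $\gamma(\vbeta)_j$ relative to $\gamma_j$, membership in $U_\vgamma^j$, and the sign of $\beta_j-\gamma_j$; three of these cases conclude $\vbeta\in W_{\vgamma,0}$ and hence $\vbeta\in B_t(\vgamma)$, and the residual case ($\vbeta\notin U_\vgamma^j$, $\beta_j<\gamma_j<\gamma(\vbeta)_j$) is excluded by a connectedness argument about the two components of $V\setminus U_\vgamma^j$. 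You instead take the clean dichotomy ``$|\vbeta-\vgamma|<t$ or $\vbeta\notin W_{\vgamma,0}$'' and, in the second alternative, derive both required properties at once: $\vbeta\notin U_\vgamma^j$ is the contrapositive of Lemma \ref{UinW} (which you could simply cite rather than re-derive), and $\beta_j>\gamma_j$ follows from your sign-propagation observation that $\vbeta-\vgamma$ and $\vgamma(\vbeta)-\vgamma$ make angle less than $\tfrac{\pi}{3}+\theta_0+\theta_1<\tfrac{\pi}{2}$ with the same ray $\pm P_Ve_j$, combined with ruling out $\gamma(\vbeta)_j<\gamma_j$ by the same Apollonian inclusion the paper uses in its Case 1 (you run it as a contradiction with $\vbeta\in W_{\vgamma',0}$ rather than as a direct implication). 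Your route buys a slightly tighter argument that avoids the connectedness discussion entirely and makes explicit where the ordering $\gamma_j<\gamma_j'$ enters; the paper's route is more mechanical but covers the degenerate subcases (e.g.\ $\beta_j=\gamma_j$) without the extra angle bookkeeping. One presentational point: like the paper's own proof, your argument implicitly assumes $\gamma_j\leq\gamma_j'$, which is not stated in the lemma but is how the lemma is invoked in Proposition \ref{proptentgeo}(2), so this is not a defect relative to the paper.
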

\begin{proof}
    Let $\vbeta\in  W_{\vgamma',0}\setminus W_{\vgamma,t}$, we may split into four cases.\\
    
    Case 1:  $\gamma(\vbeta)_{j}\leq \gamma_{j}$.\\
    Since $\gamma(\vbeta)_{j}\leq \gamma_{j}\leq \gamma'_{j}$, by Lemma \ref{appolem} and Lemma \ref{pres_order_appo}, 
\begin{equation}
    \br{
        \: \delta_{1}|\vbeta -\vgamma'| \leq |\vbeta -\vgamma (\vbeta)| \:
    }
    \implies
    \br{
        \: \delta_{1}|\vbeta -\vgamma| \leq |\vbeta -\vgamma (\vbeta)|  \:
    }.
\end{equation}
Since $\vbeta \in W_{\vgamma',0}$, we then have $\vbeta \in W_{\vgamma ,0}$. By assumption, $\vbeta \notin W_{\vgamma,t}$. Hence $\vbeta\in B_{t}(\vgamma)$.\\

Case 2: $\vbeta \in U_{\vgamma}^{j}$.\\
By Lemma \ref{UinW}, $\vbeta \in W_{\vgamma ,0}$. By assumption, $\vbeta \notin W_{\vgamma,t}$. Hence $\vbeta\in B_{t}(\vgamma)$.\\

Case 3: $\gamma_{j}<\beta_{j}$.\\
Since, $\vbeta\in  W_{\vgamma',0}\setminus W_{\vgamma,t}\subseteq W_{\vgamma' ,0}\setminus U_{\vgamma}^{j}$, we have $\vbeta\in   \left(W_{\vgamma' ,0}\setminus U_{\vgamma}^{j}\right)^{>\gamma_{j}}$.\\

Case 4: $\vbeta \notin U_{\vgamma}^{j}$ and $\beta_{j}<\gamma_{j}<\gamma(\vbeta)_{j}$.\\
 Notice that
\begin{equation}
    W_{\vgamma,0}=V\setminus \bigcup_{\widetilde{\vgamma} \in \Gamma}B_{\delta_{1}}(\widetilde{\vgamma} ,\vgamma).
\end{equation}
Now that $V\setminus U_{\vgamma}^{j}$ has two connected components. On the one hand, $\gamma_{j}<\gamma(\vbeta)_{j}$ and thus, $B_{\delta_{1}}(\vgamma(\vbeta) ,\vgamma)$ lies in the right component. On the other hand, $\beta_{j}<\gamma_{j}$ and thus, $\vbeta$ lies in the left component. Hence $\vbeta \notin B_{\delta_{1}}(\vgamma(\vbeta) ,\vgamma)$. Unpacking the definition of $B_{\delta_{1}}(\vgamma(\vbeta) ,\vgamma)$ and $W_{\vgamma,0}$, we obtain $\vbeta \in W_{\vgamma ,0}$. Together with $\vbeta \notin W_{\vgamma,t}$, we conclude $\vbeta\in B_{t}(\vgamma)$.
\end{proof}
We finish this section with the proof of statement \textit{(2)} in Proposition \ref{proptentgeo}. 
\begin{proof}[Proof of Proposition \ref{proptentgeo} (2).]
By Lemma \ref{rhslemma}, 
\begin{equation}
        W_{\vgamma',0}\setminus W_{\vgamma,\delta_1 t}\subseteq B_{\delta_1 t}(\vgamma)\cup \left( W_{\vgamma' ,0}\setminus U_{\vgamma}^{j}\right)^{>\gamma_{j}}.
\end{equation}
Suppose $\vbeta' \in B_{\delta_1 t}(\vgamma)$, we have the following estimate
\begin{equation*}
    |{\beta'}_{j}-\gamma_{j}|\leq |\vbeta'-\vgamma|<\delta_1 t\leq \delta_1 |\vbeta -\vgamma|.
\end{equation*}
With the assumption that $\vbeta \in W_{\vgamma,t}\setminus U_{\vgamma}^{j}$, we obtain
\begin{equation}\label{eq522}
        |\beta_{j}-{\beta'}_{j}|\geq |\beta_{j}-\gamma_{j}|-|{\beta'}_{j}-\gamma_{j}|\geq (\delta_{2}-\delta_1)|\vbeta -\vgamma|.
\end{equation}
We then split the previous term and further estimate \eqref{eq522} by  
\begin{equation*}
        (\delta_{2}-\delta_1)\frac{1}{1+\delta_1}|\vbeta -\vgamma|+(\delta_{2}-\delta_1)\frac{\delta_1}{1+\delta_1}|\vbeta -\vgamma|
\end{equation*}
\begin{equation*}
       \geq \frac{\delta_{2}-\delta_1}{1+\delta_1}\left(|\vbeta-\vgamma|+|\vbeta'-\vgamma| \right)
    \geq  \rho \left(d_{\Gamma}(\vbeta)+d_{\Gamma}(\vbeta') \right).
\end{equation*}
Suppose  $\vbeta' \in \left( W_{\vgamma' ,0}\setminus U_{\vgamma}^{j}\right)^{>\gamma_{j}}$, then $\beta_{j}<\gamma_{j}<{\beta'}_{j}$. Hence
\begin{equation*}
    |{\beta'}_{j}-\beta_{j}|=|{\beta'}_{j}-\gamma_{j}|+|\gamma_{j}-\beta_{j}|\geq \delta_{2}(|\vbeta' -\vgamma|+|\vbeta -\vgamma|)\geq  \rho \left(d_{\Gamma}(\vbeta')+d_{\Gamma}(\vbeta) \right).
\end{equation*}
% \begin{equation}
%     \geq  \delta_{2}\left(d_{\Gamma}(\vbeta')+d_{\Gamma}(\vbeta) \right) \geq  \rho \left(d_{\Gamma}(\vbeta')+d_{\Gamma}(\vbeta) \right).
% \end{equation}
We complete the proof of Proposition 
\ref{proptentgeo}.
\end{proof}

\section{Proof of Proposition \ref{prop_select}: Selection Algorithm, $L^{\infty}$ Component}
 Let $t_{0}:=\operatorname{dist}(P_{V}\Omega ,\Gamma)=\underset{\vbeta \in P_{V}\Omega}{\operatorname{inf}}d_{\Gamma}(\vbeta)$. By compactness, there's a positive distance between \(P_{V}\Omega\) and \(\Gamma\) and therefore \(t_0>0\). For $\vbeta \in V$, $j\in \{1,2,3\}$, we write the projection map $P_{j}\vbeta:=P_{\R \cdot \ve_j}\vbeta=\beta_{j}$ for simplicity. Let $\T_{0}=\varnothing$. Suppose we have $\T_{k}$ for $0\leq k\leq k_{0}$ we define
\begin{equation*}
    \P_{k_{0}}:=\bigg(\Omega \cap \left\vert F_{j}f\right\vert^{-1}(\lambda,2\lambda]
    \cap \left\{(\alpha,\vbeta)\in \mathbb{R}\times V  \setminus \Gamma :
    2^{k}t_{0} \leq d_{\Gamma}(\vbeta) < 2^{k+1}t_{0} \right\}\bigg)
\end{equation*}
\begin{equation*}
   \setminus \bigg( \bigcup_{k=0}^{k_{0}}\bigcup_{T\in \T_{k}}D_{T}\bigg).
\end{equation*}
For next iteration, suppose we have $\P_{k}$ and $\T_{k}$ for $0\leq k\leq k_{0}$, we construct $\T_{k_{0}+1}$ through the following process.
For $(\alpha ,\vbeta)\in \R\times V$, $t>0$ define rectangles
\begin{equation}
    R_{\alpha,\vbeta ,t}:=\left(\alpha + \frac{c_{s}}{t}\Br{-\frac{1}{2},\frac{1}{2}}\right)\times \left( 
 \gamma(\vbeta)_{j}+c_{f}t\Br{-\frac{1}{2},\frac{1}{2}} \right).
\end{equation}
where $c_{s}$ and $c_{f}$ are two constants to be determined later.
Let $\P_{k_{0}}^{\ast}$ be a finite subset of $\P_{k_{0}}$ such that for distinct point $(\alpha,\vbeta), (\alpha' ,\vbeta')\in \P_{k_{0}}^{\ast}$
\begin{equation}\label{Rdijonit63}
 R_{\alpha,\vbeta ,2^{k_{0}}t_{0}}\cap  R_{\alpha',\vbeta' ,2^{k_{0}}t_{0}}=\varnothing.
\end{equation}
and maximal in the sense that for any $(\alpha,\vbeta)\in \P_{k_{0}}$, there exists a $(\alpha',\vbeta')\in \P_{k_{0}}^{\ast}$ such that
\begin{equation}
    R_{\alpha,\vbeta ,2^{k_{0}}t_{0}}\cap  R_{\alpha',\vbeta' ,2^{k_{0}}t_{0}}\neq \varnothing.
\end{equation}
The existence of such finite set $\P_{k_{0}}^{\ast}$ is guaranteed by the compactness of $\Omega$ and a greedy algorithm. 
Given $\vbeta \in V \setminus \Gamma$, and for $i\in \mathbb{Z},\:-M\leq i\leq M$ with $M$ being the least integer greater than $\frac{3c_{f}}{2\delta_{0}(1-\delta_1)}$, we define
\begin{equation}
    \Gamma^{i}_{\vbeta}:=\Gamma \cap P_{j}^{-1}\left(\gamma(\vbeta)_{j}+\delta_{0}(1-\delta_1) 2^{k_{0}}t_{0}[i-1,i]  \right).
\end{equation}
If $\Gamma^{i}_{\vbeta}\neq \varnothing$, by closedness of \(\Gamma^i\), there exists $\vgamma_{-}^{(i)}(\vbeta),\vgamma_{+}^{(i)}(\vbeta)\in \Gamma^{i}_{\vbeta}$ such that $P_{j}(\Gamma^{i}_{\vbeta})\subseteq [\gamma_{-}^{(i)}(\vbeta)_{j},\gamma_{+}^{(i)}(\vbeta)_{j}]$. Define 
\begin{equation}
    \T_{k_{0}+1}:=\bigcup_{i=-M}^{M}\left\{\left(\alpha+\frac{c}{2^{k_{0}}t_{0}}\Br{-\frac{1}{2},\frac{1}{2}},\vgamma_{\pm}^{(i)}({\vbeta})\right):(\alpha,\vbeta)\in P_{k_{0}}^{\ast}\right\},
\end{equation}
where $c=(3c_{s}\vee\frac{1}{\delta_1})$. We will show that $\P=\underset{k\geq 0}{\bigcup}\P_{k}^{\ast}$ and $\T=\underset{k\geq 0}{\bigcup}\T_{k}$ satisfy the desired properties in Proposition \ref{prop_select}. To show the covering property \eqref{select1-1}, we first recall that by construction \(\P_k^\ast\subset \Omega \cap \abs{F_j f}^{-1}\left(\lambda,2\lambda\right]\), and thus \(\P \subset \Omega \cap \abs{F_j f}^{-1}\left(\lambda,2\lambda\right] \). On the other hand, for \(\br{\alpha,\vbeta}\in \Omega \cap \abs{F_j f}^{-1}\left(\lambda,2\lambda\right]\), there is \(k_0\) such that \(2^{k_0}t_0\leq d_\Gamma\br{\vbeta}<2^{k_0+1}t_0\). By construction, either 
\begin{equation*}
    \br{\alpha,\vbeta}\in \bigcup_{k=0}^{k_{0}}\bigcup_{T\in \T_{k}}D_{T}\subset \bigcup_{\br{I,\vgamma}\in\T}D_T
\end{equation*}
and the covering property \eqref{select1-1} is verified, or the alternative \(\br{\alpha,\vbeta}\in \P_{k_0}\) happens. In the case where \(\br{\alpha,\vbeta}\in \P_{k_0}\), the maximality of \(\P^\ast_{k_0}\) guarantees the existence of a point \(\br{\alpha',\vbeta'}\in \P_{k_0}^\ast\) such that
\begin{equation*}
    R_{\alpha,\vbeta,2^{k_0}t_0}\cap R_{\alpha',\vbeta',2^{k_0}t_0}\neq \varnothing.
\end{equation*}
As a direct consequence
\begin{equation*}
    \br{\alpha,\gamma\br{\vbeta}_j}\in \br{\alpha'+\frac{3c_s}{2^{k_0}t_0}\Br{-\frac{1}{2},\frac{1}{2}}} \times \br{\gamma\br{\vbeta'}_j+3c_f 2^{k_0}t_0\Br{-\frac{1}{2},\frac{1}{2}}}.
\end{equation*}
We now recall that \(M \geq \frac{3c_f}{2\delta_0\br{1-\delta_1}}\). This implies that \(\vgamma\br{\vbeta}\in \Gamma^i_{\vbeta'}\) for some \(i\in \Br{-M,M}\).
In particular, \(\gamma^{\br{i}}_-\br{\vbeta}_j\leq \gamma\br{\vbeta}_j\leq \gamma^{\br{i}}_+\br{\vbeta}_j\leq \gamma^{\br{i}}_-\br{\vbeta}_j +\delta_0 \br{1-\delta_1} 2^{k_0}t_0\) for the same \(i\).
Using statement \textit{(1)} in Proposition \ref{proptentgeo} and the fact that \(c\geq \frac{1}{\delta_1}\), we obtain
\begin{equation*}
    \br{\alpha,\vbeta}\in \bigcup_{\br{I,\vgamma}\in \T_{k_0+1}}D_T\subset \bigcup_{\br{I,\vgamma}\in \T}D_T.
\end{equation*}
The estimate \eqref{select1-2} holds directly by the construction of $\P$ and $\T$. We now verify the orthogonality property for $\P$. For $(\alpha,\vbeta)\in P_{k}^{\ast}$ and  $(\alpha',\vbeta')\in \P_{k'}^{\ast}$, we split the argument into two cases.

case 1: $k=k'$. By \eqref{Rdijonit63}, we have either
\begin{equation}\label{eq67}
    |\alpha-\alpha'|\geq c_{s}(2^{k}t_{0})^{-1}
\end{equation}
or 
\begin{equation}\label{eq68}
    |\gamma(\vbeta)_{j}-\gamma(\vbeta')_{j}|\geq c_{f}2^{k}t_{0}.
\end{equation}
In case \eqref{eq67}, taking $c_{s}=4$ gives
\begin{equation*}
     |\alpha-\alpha'|\geq \frac{c_{s}}{2}(d_{\Gamma}(\vbeta)^{-1}+d_{\Gamma}(\vbeta')^{-1})\geq 2(d_{\Gamma}(\vbeta)^{-1}+d_{\Gamma}(\vbeta')^{-1}).
\end{equation*}
In case \eqref{eq68}, taking $c_{f}=\frac{11}{\delta_{2}}$ gives $c_{f}\geq 8\geq 4( \rho +1)$, and thus
\begin{equation*}
    |\beta_{j}-\beta^{'}_{j}|\geq |\gamma(\vbeta)_{j}-\gamma(\vbeta')_{j}|-|\beta_{j}-\gamma(\vbeta)_{j}|-|\beta^{'}_{j}-\gamma(\vbeta')_{j}|
\end{equation*}
\begin{equation*}
    \geq c_{f}2^{k}t_{0}-d_{\Gamma}(\vbeta)-d_{\Gamma}(\vbeta') \geq (\frac{c_{f}}{4}-1)(d_{\Gamma}(\vbeta)+d_{\Gamma}(\vbeta'))\geq  \rho (d_{\Gamma}(\vbeta)+d_{\Gamma}(\vbeta')).
\end{equation*}

case 2: $k<k'$. Either 
\begin{equation}\label{eq613}
    \alpha' \notin \alpha+\frac{c}{2^{k}t_{0}}\Br{-\frac{1}{2},\frac{1}{2}},
\end{equation}
or 
\begin{equation}\label{eq614}
      \alpha' \in \alpha+\frac{c}{2^{k}t_{0}}\Br{-\frac{1}{2},\frac{1}{2}}.
\end{equation}
In the case \eqref{eq613}, since $c\geq 3c_{s}\geq 12$,
\begin{equation*}
    |\alpha-\alpha'|\geq \frac{1}{2}\cdot \frac{c}{ 2^k t_0}\geq \frac{c}{4}(\frac{1}{2^{k}t_{0}}+\frac{1}{2^{k'}t_{0}})\geq  2(d_{\Gamma}(\vbeta)^{-1}+d_{\Gamma}(\vbeta')^{-1}).
\end{equation*}
In the case \eqref{eq614}, notice that
\begin{equation*}
    (\alpha',\vbeta')\notin  \bigg( \bigcup_{k=0}^{k'}\bigcup_{T\in \T_{k}}D_{T}\bigg).
\end{equation*}
In particular, since $k<k'$,
\begin{equation*}
      (\alpha',\vbeta')\notin \bigcup_{i=-M}^{M}\left(\alpha+\frac{c}{2^{k}t_{0}}[-\frac{1}{2},\frac{1}{2}]\right) \times W_{\vgamma_{\pm}^{(i)}(\vbeta),\frac{2^{k}t_{0}}{c}}.
\end{equation*}
Together with \eqref{eq614}, and by $c\geq \frac{1}{\delta_1}$ and Proposition \ref{proptentgeo}, we obtain
\begin{equation}\label{eq618}
    \vbeta' \notin \bigcup_{\substack{\vgamma \in \Gamma\\ |\gamma_{j}-\gamma(\vbeta)_{j}|\leq \frac{3}{2}c_{f}2^{k}t_{0}}}W_{\vgamma, 2^{k}t_{0}}.
\end{equation}
On the other hand,
\begin{equation*}
    \vbeta' \in W_{\vgamma(\vbeta'),2^{k'}t_{0}}\subseteq  W_{\vgamma(\vbeta'),2^{k}t_{0}}.
\end{equation*}
Together with \eqref{eq618},  we have
\begin{equation}\label{eq620}
    |\vgamma(\vbeta')-\vgamma(\vbeta)|\geq |\gamma(\vbeta')_{j}-\gamma(\vbeta)_{j}|\geq \frac{3}{2}c_{f}2^{k}t_{0}.
\end{equation}
Note that $\vbeta' \notin W_{\vgamma (\vbeta),2^{k}t_{0}}$ and
\begin{equation*}
    |\vbeta'-\vgamma (\vbeta)|\geq d_{\Gamma}(\vbeta')\geq d_{\Gamma}(\vbeta)\geq 2^{k}t_{0},
\end{equation*}
we have $\vbeta' \notin W_{\vgamma(\vbeta),0}$. That is,
\begin{equation}\label{eq622}
    \delta_{1}|\vbeta'-\vgamma(\vbeta)|\geq d_{\Gamma}(\vbeta')=|\vbeta'-\vgamma(\vbeta')|.
\end{equation}
Combining \eqref{eq620} and 
\eqref{eq622},
\begin{equation}\label{eq623}
    \frac{3}{2}c_{f}2^{k}t_{0}\leq |\vgamma(\vbeta')-\vgamma(\vbeta)|\leq |\vbeta'-\vgamma (\vbeta)|+ |\vbeta'-\vgamma (\vbeta')|\leq (1+\delta_{1})|\vbeta'-\vgamma (\vbeta)|.
\end{equation}
Since $\vbeta' \notin W_{\vgamma(\vbeta),0}$, by Lemma \ref{UinW}, we also have $\vbeta' \notin U_{\vgamma(\vbeta)}^{j}$. Therefore,
\begin{equation}\label{eq624}
    |\beta_{j}^{'}-\beta_{j}|\geq |\beta_{j}^{'}-\gamma(\vbeta)_{j}|-|\beta_{j}-\gamma(\vbeta)_{j}|\geq \delta_{2}|\vbeta'-\vgamma (\vbeta)|-2^{k+1}t_{0}.
\end{equation}
Together with \eqref{eq623} and the trivial estimate $|\vbeta'-\vgamma(\vbeta)|\geq d_{\Gamma}(\vbeta')$, \eqref{eq624} can be estimated from below by
\begin{equation*}
     \frac{\delta_{2}}{2}\frac{3c_{f}}{2(1+\delta_{1})}2^{k}t_{0}+\frac{\delta_{2}}{2}d_{\Gamma}(\vbeta')-2^{k+1}t_{0}
\end{equation*}
\begin{equation*}
    \geq \left(\frac{3\delta_{2}c_{f}}{8(1+\delta_{1})}-1 \right)d_{\Gamma}(\vbeta)+\frac{\delta_{2}}{2}d_{\Gamma}(\vbeta')\geq  \rho (d_{\Gamma}(\vbeta)+d_{\Gamma}(\vbeta')).
\end{equation*}
This completes the proof of Proposition \ref{prop_select}.

\section{Proof of Proposition \ref{prop_selectL2}: Selection Algorithm, $L^{2}$ Component}
 Since $\Omega$ is compact, we may assume $\Omega \subseteq \R\times P_{j}^{-1}([-A,A])$. We set up an iteration algorithm. Let $\Omega_{0}:=\Omega$ and \(t_0=\frac{\lambda^2}{2C^2\nrm{f}_{L^2}^2}\), where \(C=C\br{\theta_0}\) is the constant that realizes the estimate \eqref{eq_global_L2_g_est}
 \begin{equation}
     \nrm{F_j f}_{L^2_\nu \br{\R\times \br{W_{\vgamma,0}\setminus U_\vgamma^j}}}\leq C\nrm{f}_{L^2}.
 \end{equation}
 Suppose now $\Omega_{k-1}$ is given, we define a collection of intervals $\I_{k}$ for integer \(k\) in the range $0\leq k\leq \frac{2A}{\delta_{0}(1-\delta_1)t_{0}}+1$. The collection \(\I_k\) consists of interval $I$ with the following properties: there is a point $\vgamma$ in the strip
\begin{equation}\label{strip67}
  \Gamma^k:=\Gamma\cap P_{j}^{-1}\bigg(-A+\delta_{0}(1-\delta_1)t_{0}\cdot[k-1,k]\bigg)\subseteq V
\end{equation}
such that
\begin{equation}\label{largesize68}
  |I|^{-\frac{1}{2}}\|1_{\Omega_{k-1}} F_jf
            \|_{L^2_{\nu}\left( I\times (W_{\vgamma,1/\abs{I}}\setminus U_{\vgamma}^{j}\right)^{<j})}\geq \frac{\lambda}{\sqrt{2}}. 
\end{equation}
We apply Vitali covering lemma on \(5\I_k:=\left\{5I:I\in \I_k\right\}\). As a result, there is a subcollection $\J_{k}\subseteq \I_{k}$ such that for all distinct $I,J\in \J_{k}$,
\begin{equation*}
    5I\cap 5J=\varnothing
\end{equation*}
and 
\begin{equation*}
    \bigcup \I_{k}\subseteq \bigcup 5\I_{k} \subseteq \bigcup 25\J_{k}.
\end{equation*}
For $I\in \J_{k}$, let $\vgamma_{I}\in\Gamma^k$ be the point \(\gamma\) that realizes \eqref{largesize68} and \(\vgamma^k_+,\vgamma^k_-\in \Gamma^k\) be the two endpoints such that \(P_j\br{\Gamma^k}\subseteq\Br{\gamma^k_{-,j},\gamma^k_{+,j}}\). Define
\begin{equation}
    \S_{k}:=\left\{\left( I,\vgamma_{I},\Omega_{k-1}\cap I\times \left(W_{\vgamma,\frac{1}{\abs{I}}}\setminus U_{\vgamma}^{j}\right)^{<j}  \right)\: : \:I\in \J_{k}  \right\}
\end{equation}
% For all $\vgamma \in \Gamma^k$ and $I \in \I_k$, since
% \begin{equation}
%     \gamma^k_{-,j}\leq \gamma_j\leq \gamma^k_{+,j}\leq
%     \gamma^k_{-,j}+\delta_0\br{1-\delta_1}t_0\leq
%     \gamma^k_{-,j}+\delta_0\br{1-\delta_1}\frac{1}{\abs{I}},
% \end{equation}
% by statement \textit{(1)} in Proposition \ref{proptentgeo} and the construction of $\J_{k}$, there exist a $J\in \J_{k}$ such that
% \begin{equation}
%     I\subseteq 5I\subseteq 25J\subseteq \frac{25}{\delta_1}J
% \end{equation}
% and 
% \begin{equation}
%     W_{\vgamma,\frac{1}{|I|}}\subseteq W_{\vgamma_{-}^{k},\frac{\delta_1}{|I|}}\cup W_{\vgamma_{+}^{k},\frac{\delta_1}{|I|}}\subseteq W_{\vgamma_{-}^{k},\frac{\delta_1}{|25J|}}\cup W_{\vgamma_{+}^{k},\frac{\delta_1}{|25J|}}.
% \end{equation}
and
\begin{equation}
    \T_{k}:=\left\{\left(\frac{25}{\delta_1}I,\vgamma_{\pm}^{k}\right) \: : \: I\in \J_{k}    \right\}\cup \left\{\left( \frac{1}{\delta_1}I,\vgamma_{I}\right) :I \in \J_{k}   \right\}.
\end{equation}
For the next iteration, we set
\begin{equation}
    \Omega_{k}:=\Omega_{k-1}\setminus \left( \bigcup_{i=1}^{k}\left(\bigcup_{T\in \T_{i}} D_{T}\right) \right).
\end{equation}
Eventually, we obtain
\begin{equation}
    \S:= \bigcup_{k=1}^{\infty}\S_{k}, \quad \T:= \bigcup_{k=1}^{\infty}\T_{k}.
\end{equation}
We will show that $\S$ and $\T$ satisfy the desired properties in Proposition \ref{prop_selectL2}. To show the covering property
\eqref{select2-1}, we assume the alternative that there is a tent \(\br{I,\vgamma}\) that violates \eqref{select2-1}:
\begin{equation}
    |I|^{-\frac{1}{2}}\nrm{
        1_{\Omega\setminus\bigcup_{T\in \T}D_{T}} F_jf
    }_{L^2_{\nu}\br{ I\times (W_{\vgamma,1/|I|}\setminus U_{\vgamma}^{j})^{<j}}}>\frac{\lambda}{\sqrt{2}}.
\end{equation}
By construction, there is a \(k\) such that $I \in \I_k$ and \(\vgamma\in\Gamma^k\) realizes \eqref{largesize68}. The relation \eqref{largesize68} and \eqref{eq_global_L2_g_est} imply the following estimate
\begin{equation}\label{eq711}
    \frac{\lambda}{\sqrt{2}} \leq \abs{I}^{-\frac{1}{2}}\nrm{F_j f}_{L^2_\nu \br{\R\times \br{W_{\vgamma,0}\setminus U_\vgamma^j}}} \leq C \abs{I}^{-\frac{1}{2}}\nrm{f}_{L^2}.
\end{equation}
By the definition of \(t_0\), $\|f\|_{L^{2}}=\frac{\lambda}{C\sqrt{2t_0}}$, and thus \eqref{eq711} is equivalent to \(t_0\leq \frac{1}{\abs{I}}\). As a result,
\begin{equation*}
    \gamma^k_{-,j}\leq \gamma_j\leq \gamma^k_{+,j}\leq
    \gamma^k_{-,j}+\delta_0\br{1-\delta_1}t_0\leq
    \gamma^k_{-,j}+\delta_0\br{1-\delta_1}\frac{1}{\abs{I}}.
\end{equation*}
By statement \textit{(1)} in Proposition \ref{proptentgeo} and the construction of $\J_{k}$, there exist a $J\in \J_{k}$ such that
\begin{equation*}
    I\subseteq 5I\subseteq 25J\subseteq \frac{25}{\delta_1}J
\end{equation*}
and 
\begin{equation*}
    W_{\vgamma,\frac{1}{|I|}}\subseteq W_{\vgamma_{-}^{k},\frac{\delta_1}{|I|}}\cup W_{\vgamma_{+}^{k},\frac{\delta_1}{|I|}}\subseteq W_{\vgamma_{-}^{k},\frac{\delta_1}{|25J|}}\cup W_{\vgamma_{+}^{k},\frac{\delta_1}{|25J|}}.
\end{equation*}
That is, 
\begin{equation*}
    I\times (W_{\vgamma,1/|I|}\setminus U_{\vgamma}^{j})^{<j}\subseteq
    D_{\br{I,\vgamma}}\subseteq \bigcup_{T\in \T_k }D_T
    \subseteq \bigcup_{T\in \T }D_T.
\end{equation*}
This is a contradiction, and thus \eqref{select2-1} must hold.
The estimate \eqref{select2-2} follows directly from the construction of $\S$ and $\T$. In the following, we check $\S$ satisfies the orthogonal property. Given $(I,\vgamma,S)\in \S_{k}, (\alpha,\vbeta)\in S$ and $(I',\vgamma',S')\in \S_{k'}, (\alpha',\vbeta')\in S'$, without loss of generality, we may assume $\vgamma_{j}\leq \vgamma_{j'}$. We split into two cases according to whether they are in the same strip.

case 1: $k=k'$. By construction, we have $S\subseteq I\times V\setminus \Gamma$, $S'\subseteq I'\times V\setminus \Gamma$, and $5I\cap 5I'=\varnothing$. Hence
\begin{equation*}
    |\alpha-\alpha'|\geq \frac{5-1}{2}(|I|+|I'|)\geq 2|I|.
\end{equation*}

case 2: $k<k'$. By the construction of $S_{k'}$,
\begin{equation}\label{setminus617}
    S'\subseteq I'\times \left(W_{\vgamma',\frac{1}{\abs{I'}}}\setminus U_{\vgamma'}^{j}\right)^{<j}\cap \Omega_{k'-1}\subseteq  I'\times \left(W_{\vgamma',\frac{1}{\abs{I'}}}\setminus U_{\vgamma'}^{j}\right)^{<j} \setminus \left(\frac{1}{\delta_1}I\times W_{\vgamma,\frac{\delta_1}{|I|}}\right).
\end{equation}
We either have
\begin{equation*}
    |\alpha-\alpha'|\geq \frac{1/\delta_1 -1}{2}|I|\geq 2\abs{I},
\end{equation*}
and the orthogonality property \eqref{select2-31} is verified, or the alternative
\begin{equation*}
    |\alpha-\alpha'|< \frac{1/\delta_1 -1}{2}|I|.
\end{equation*}
Since $\alpha \in I$, we have $\alpha' \in \frac{1}{\delta_1}I$. Then by \eqref{setminus617}, we have
\begin{equation*}
    \vbeta' \in \left(W_{\vgamma',\frac{1}{\abs{I'}}}\setminus U_{\vgamma'}^{j}\right)^{<j} \setminus W_{\vgamma,\frac{\delta_1}{|I|}}.
\end{equation*}
By applying the statement \textit{(2)} of Proposition 
\ref{proptentgeo}, we obtain \eqref{select2-32}. This completes the proof of Proposition \ref{prop_selectL2}.

\section{Proof of Proposition \ref{prop_L2_emb}: Bessel Type Estimate}\label{secbessel}
Throughout the rest of the argument, we take \(\varepsilon\) to be the specific value $\frac{\delta_{1}\rho^{2}}{2}$.
\begin{lemma}\label{prop_Linfty_comp_est}
    Let \(\P\) and \(f\) be as in Proposition \ref{prop_select}. We have
    \begin{equation*}
        \sum_{\br{\alpha,\vbeta}\in \P} \frac{\abs{F_j f\br{\alpha,\vbeta}}^2}{d_\Gamma\br{\vbeta}}\lesssim \nrm{f}^2_{L^2}.
    \end{equation*}
\end{lemma}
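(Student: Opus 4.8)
Recall from the proof of Proposition~\ref{prop_Linfty_emb} that $F_{j}f\br{\alpha,\vbeta}=\ang{f,\varphi_{\alpha,\vbeta}^{j}}$, where the wave packet $\varphi_{\alpha,\vbeta}^{j}=\Tr_{\alpha}\Mod_{-\beta_{j}}\Dil^{1}_{d_{\Gamma}(\vbeta)^{-1}}\overline{\varphi}$ has Fourier transform supported in the interval $\omega_{\alpha,\vbeta}$ centred at $-\beta_{j}$ of length $\sim\varepsilon\,d_{\Gamma}(\vbeta)$, obeys the spatial bound $\abs{\varphi_{\alpha,\vbeta}^{j}(x)}\lesssim_{N}\varepsilon\,d_{\Gamma}(\vbeta)\br{1+\varepsilon\,d_{\Gamma}(\vbeta)\abs{x-\alpha}}^{-N}$, and satisfies $\nrm{\varphi_{\alpha,\vbeta}^{j}}_{L^{2}}^{2}\sim d_{\Gamma}(\vbeta)$. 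Hence, writing $\psi_{\alpha,\vbeta}:=\varphi_{\alpha,\vbeta}^{j}/\nrm{\varphi_{\alpha,\vbeta}^{j}}_{L^{2}}$, the claimed inequality is the Bessel-type estimate $\sum_{\br{\alpha,\vbeta}\in\P}\abs{\ang{f,\psi_{\alpha,\vbeta}}}^{2}\lesssim\nrm{f}_{L^{2}}^{2}$. Moreover, since $\abs{F_{j}f}\sim\lambda$ on $\P$ by \eqref{select1-1}, this is equivalent, after dividing by $\lambda^{2}$, to $\sum_{\br{\alpha,\vbeta}\in\P}d_{\Gamma}(\vbeta)^{-1}\lesssim\nrm{f}_{L^{2}}^{2}/\lambda^{2}$, which is the form in which the estimate feeds into \eqref{eq_Tlarge_est}.

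The structural fact I would isolate first is that, because $\varepsilon=\tfrac{\delta_{1}\rho^{2}}{2}<\rho$ was fixed at the start of this section, \emph{frequency-overlapping wave packets are spatially separated}: if $\br{\alpha,\vbeta},\br{\alpha',\vbeta'}\in\P$ are distinct and $\omega_{\alpha,\vbeta}\cap\omega_{\alpha',\vbeta'}\neq\varnothing$, then $\abs{\beta_{j}-\beta_{j}'}\lesssim\varepsilon\br{d_{\Gamma}(\vbeta)+d_{\Gamma}(\vbeta')}<\rho\br{d_{\Gamma}(\vbeta)+d_{\Gamma}(\vbeta')}$, so the alternative \eqref{select1-32} fails and \eqref{select1-31} holds, i.e.\ $\abs{\alpha-\alpha'}\geq 2\br{d_{\Gamma}(\vbeta)^{-1}+d_{\Gamma}(\vbeta')^{-1}}$. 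Equivalently, the phase-space boxes $B_{2d_{\Gamma}(\vbeta)^{-1}}(\alpha)\times B_{\rho d_{\Gamma}(\vbeta)}(\beta_{j})$ are pairwise disjoint. As a first consequence, at any fixed dyadic scale $d_{\Gamma}(\vbeta)\sim 2^{n}$ these boxes are pairwise disjoint and of comparable dimensions, so Plancherel in the frequency variable together with the elementary overlap bound for the spatial supports yields the single-scale Bessel bound $\sum_{\br{\alpha,\vbeta}\in\P,\ d_{\Gamma}(\vbeta)\sim 2^{n}}\abs{\ang{f,\psi_{\alpha,\vbeta}}}^{2}\lesssim\nrm{f}_{L^{2}}^{2}$, uniformly in $n$.

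The main difficulty -- and I expect it to be the crux of the whole lemma -- is to sum the dyadic scales of $d_{\Gamma}(\vbeta)$ without losing a factor equal to the number of scales. The wave packets are eccentric (spatial scale $(\varepsilon d_{\Gamma}(\vbeta))^{-1}$ against frequency width $\varepsilon d_{\Gamma}(\vbeta)$), so a straight Schur estimate for the Gram matrix $\br{\ang{\psi_{\alpha,\vbeta},\psi_{\alpha',\vbeta'}}}$ diverges and one genuinely needs the Bessel mechanism. I would run the energy-selection argument of \cite{do2015lp} and \cite{muscalu2013classical}: iteratively extract maximal trees from $\P$ -- sub-collections admitting a top $(I_{\mathcal{T}},\xi_{\mathcal{T}})$ with $\xi_{\mathcal{T}}\in\omega_{\alpha,\vbeta}$ and $B_{d_{\Gamma}(\vbeta)^{-1}}(\alpha)\subseteq I_{\mathcal{T}}$ for every $\br{\alpha,\vbeta}\in\mathcal{T}$, which by Proposition~\ref{proptentgeo} are controlled by a single tent $D_{T}$ with $T=(I_{\mathcal{T}},\vgamma)$ -- then prove the single-tree estimate $\sum_{\br{\alpha,\vbeta}\in\mathcal{T}}d_{\Gamma}(\vbeta)^{-1}\lesssim\abs{I_{\mathcal{T}}}$ (using the box disjointness restricted to the tree, the single-scale Bessel property within the tree, and $\abs{F_{j}f}\sim\lambda$ on $\P$), and finally bound $\sum_{\mathcal{T}}\abs{I_{\mathcal{T}}}\lesssim\nrm{f}_{L^{2}}^{2}/\lambda^{2}$ by testing $f$ against the $L^{2}$-normalized wave packet attached to each tree top and exploiting that the tree tops are frequency-separated, hence almost orthogonal. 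Summing the single-tree estimates over all trees gives $\sum_{\br{\alpha,\vbeta}\in\P}d_{\Gamma}(\vbeta)^{-1}\lesssim\nrm{f}_{L^{2}}^{2}/\lambda^{2}$, which is the assertion; the place where the geometry of the Lipschitz singularity re-enters is precisely the frequency-separation of the tree tops, handled exactly as in Sections~\ref{sectentest} and~\ref{proofmainthm}.
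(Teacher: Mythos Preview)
Your diagnosis that ``a straight Schur estimate for the Gram matrix diverges'' is the mis-step; the paper's proof is precisely such a Schur/\(TT^*\) argument, and it succeeds thanks to one trick you did not use. Since \(\P\subset |F_jf|^{-1}(\lambda,2\lambda]\), any two points of \(\P\) satisfy \(|F_jf(\alpha',\vbeta')|\sim|F_jf(\alpha,\vbeta)|\). After the standard Cauchy--Schwarz reduction to
\(\bigl\|\sum_{\P}F_jf(\alpha,\vbeta)\,\varphi_{\alpha,\vbeta}^{j}/d_\Gamma(\vbeta)\bigr\|_{L^2}^2\lesssim\sum_{\P}|F_jf(\alpha,\vbeta)|^2/d_\Gamma(\vbeta)\),
one expands the square, introduces the asymmetry \(d_\Gamma(\vbeta)\leq d_\Gamma(\vbeta')\), and then \emph{replaces} \(|F_jf(\alpha',\vbeta')|\) by \(|F_jf(\alpha,\vbeta)|\). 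For each fixed \((\alpha,\vbeta)\) this leaves the off-diagonal sum
\(\sum_{d_\Gamma(\vbeta')\geq d_\Gamma(\vbeta)}|\langle\varphi_{\alpha,\vbeta}^{j},\varphi_{\alpha',\vbeta'}^{j}\rangle|/d_\Gamma(\vbeta')\).
The wave-packet bound now produces the factor \(d_\Gamma(\vbeta)/d_\Gamma(\vbeta')\) --- linear, not the square root you would see in the \(L^2\)-normalized Gram matrix --- and your own observation (frequency overlap forces \eqref{select1-31}, so the intervals \(\alpha'+d_\Gamma(\vbeta')^{-1}[-1,1]\) are pairwise disjoint) lets one dominate the whole sum by \(1+\int_\R d_\Gamma(\vbeta)(1+d_\Gamma(\vbeta)|\alpha-x|)^{-N}\,dx\lesssim 1\). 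No tree selection, no energy iteration, no geometry beyond what you already isolated.

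Your tree scheme may be completable, but as sketched it does not close. The single-tree estimate is fine (box disjointness inside a tree gives \(\sum_{\mathcal{T}}d_\Gamma(\vbeta)^{-1}\lesssim|I_{\mathcal{T}}|\)), but the final step \(\sum_{\mathcal{T}}|I_{\mathcal{T}}|\lesssim\|f\|_{L^2}^2/\lambda^2\) ``by testing \(f\) against the tree-top wave packets, frequency-separated hence almost orthogonal'' is not justified: greedy extraction does not make distinct tops frequency-disjoint --- two trees can share a frequency and be separated only spatially --- so the tops inherit nothing better than \eqref{select1-31}--\eqref{select1-32}, and you are back to the original Bessel problem for a subcollection of \(\P\). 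Proposition~\ref{prop_select} has already performed the organizational work that a tree decomposition does in the classical setting; what remains is exactly the direct almost-orthogonality argument above.
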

\begin{proof}
 Direct calculation and Cauchy-Schwarz yield
 \begin{equation*}
        \sum_{\br{\alpha,\vbeta}\in \P} 
            \frac{
                \abs{
                    F_j f\br{\alpha,\vbeta}
                }^2
            }{
                d_\Gamma\br{\vbeta}
            }
        = 
        \sum_{\br{\alpha,\vbeta}\in \P} 
            \frac{
                    \ang{
                        f,
                        \varphi_{\alpha,\vbeta}^{j}
                    }
                \overline{
                    F_j f\br{\alpha,\vbeta}
                }
            }{
                d_\Gamma\br{\vbeta}
            }
\end{equation*}
\begin{equation*}
        = 
        \ang{
            f,
            \sum_{\br{\alpha,\vbeta}\in \P} 
                \frac{
                    F_j f\br{\alpha,\vbeta}
                    \varphi_{\alpha,\vbeta}^{j}
                }{
                    d_\Gamma\br{\vbeta}
                }
        }
        \leq 
        \nrm{f}_{L^2}
        \nrm{
            \sum_{\br{\alpha,\vbeta}\in \P} 
                \frac{
                    F_j f\br{\alpha,\vbeta}
                    \varphi_{\alpha,\vbeta}^{j}
                }{
                    d_\Gamma\br{\vbeta}
                }
        }_{L^2}.
\end{equation*}
    It suffices to show
    \begin{equation*}
        \nrm{
            \sum_{\br{\alpha,\vbeta}\in \P} 
                \frac{
                    F_j f\br{\alpha,\vbeta}
                    \varphi_{\alpha,\vbeta}^{j}
                }{
                    d_\Gamma\br{\vbeta}
                }
        }_{L^2}^2
        \lesssim
        \sum_{\br{\alpha,\vbeta}\in \P} 
            \frac{
                \abs{
                    F_j f\br{\alpha,\vbeta}
                }^2
            }{
                d_\Gamma\br{\vbeta}
            }.
    \end{equation*}
    We develop the \(L^2\) norm and introduce asymmetry
\begin{equation*}
        \nrm{
            \sum_{\br{\alpha,\vbeta}\in \P} 
                \frac{
                    F_j f\br{\alpha,\vbeta}
                    \varphi_{\alpha,\vbeta}^{j}
                }{
                    d_\Gamma\br{\vbeta}
                }
        }_{L^2}^2
        \leq 
        \sum_{
            \br{\alpha,\vbeta},
            \br{\alpha',\vbeta'}\in \P
        }
            \abs{
                F_j f\br{\alpha,\vbeta}
            }
            \abs{
                F_j f\br{\alpha',\vbeta'} 
            }
            \frac{
                \abs{
                    \ang{
                        \varphi_{\alpha,\vbeta}^{j},
                        \varphi_{\alpha',\vbeta'}^{j}
                    }
                }
            }{d_\Gamma\br{\vbeta} d_\Gamma\br{\vbeta'}}
\end{equation*}
\begin{equation*}    
\leq 
        2
        \sum_{
            \substack{
                \br{\alpha,\vbeta},
                \br{\alpha',\vbeta'}\in \P\\
                d_\Gamma\br{\vbeta} \leq d_\Gamma\br{\vbeta'}
            }
        }
            \abs{
                F_j f\br{\alpha,\vbeta}
            }
            \abs{
                F_j f\br{\alpha',\vbeta'} 
            }
            \frac{
                \abs{
                    \ang{
                        \varphi_{\alpha,\vbeta}^{j},
                        \varphi_{\alpha',\vbeta'}^{j}
                    }
                }
            }{d_\Gamma\br{\vbeta} d_\Gamma\br{\vbeta'}}.
\end{equation*}   
    We recall that \(\P\subset \abs{F_j f}^{-1}\left(\lambda,2\lambda\right]\). In particular,
    \begin{equation*}
        \forall \br{\alpha,\vbeta},\br{\alpha',\vbeta'}\in \P,\quad
        \abs{
            F_j f\br{\alpha',\vbeta'}
        }
        \sim
        \abs{
            F_j f\br{\alpha,\vbeta}
        }.
    \end{equation*}
    Consequently, we may perform the following substitution
    \begin{equation*}
        \nrm{
            \sum_{\br{\alpha,\vbeta}\in \P} 
                \frac{
                    F_j f\br{\alpha,\vbeta}
                    \varphi_{\alpha,\vbeta}^{j}
                }{
                    d_\Gamma\br{\vbeta}
                }
        }_{L^2}^2
        \lesssim
        \sum_{
            \br{\alpha,\vbeta}\in \P
        }
            \frac{
                \abs{F_j f\br{\alpha,\vbeta}}^2
            }{
                d_\Gamma\br{\vbeta}
            }
            \sum_{
                \substack{
                    \br{\alpha',\vbeta'}\in \P\\
                    d_\Gamma\br{\vbeta}\leq d_\Gamma\br{\vbeta'}
                }
            }
            \frac{
                \abs{
                    \ang{
                        \varphi_{\alpha,\vbeta}^{j},
                        \varphi_{\alpha',\vbeta'}^{j}
                    }
                }
            }{d_\Gamma\br{\vbeta'}}.
    \end{equation*}
    It remains to show, for \(\br{\alpha,\vbeta}\in \P\),
    \begin{equation}
        \sum_{
                \substack{
                    \br{\alpha',\vbeta'}\in \P\\
                    d_\Gamma\br{\vbeta}\leq d_\Gamma\br{\vbeta'}
                }
            }
            \frac{
                \abs{
                    \ang{
                        \varphi_{\alpha,\vbeta}^{j},
                        \varphi_{\alpha',\vbeta'}^{j}
                    }
                }
            }{d_\Gamma\br{\vbeta'}}\lesssim 1.
    \end{equation}
    We discard all terms with no contribution to the sum. Therefore, we may assume
    \begin{equation*}
        \ang{
            \varphi_{\alpha,\vbeta}^{j},
            \varphi_{\alpha',\vbeta'}^{j}
        }\neq 0.
    \end{equation*}
    This implies that the frequency supports of the two functions overlap. In other words,
    \begin{equation*}
        \abs{
            \beta_j-\beta_j'
        }\leq \frac{4\varepsilon}{10} \br{d_\Gamma\br{\vbeta}+d_\Gamma\br{\vbeta'}}\leq \frac{8\varepsilon}{10}d_{\Gamma}(\vbeta')< \rho d_{\Gamma}(\vbeta').
    \end{equation*}
    As a result, for any other \(\br{\alpha'',\vbeta''}\) remained in the summand,
\begin{equation*}
        \abs{
            \beta_j'-\beta_j''
        }\leq
        \abs{
            \beta_j-\beta_j'
        }+
        \abs{
            \beta_j-\beta_j''
        }
        <
       \rho\br{d_\Gamma\br{\vbeta'}+d_\Gamma\br{\vbeta''}}.
\end{equation*}
    This violates \eqref{select1-32}. Thus, for distinct \(\br{\alpha',\vbeta'},\br{\alpha'',\vbeta''}\) remained in the summand, they must satisfy \eqref{select1-31}
    \begin{equation*}
        \abs{\alpha'-\alpha''}\geq 2 \br{d_\Gamma\br{\vbeta'}^{-1}+d_\Gamma\br{\vbeta''}^{-1}}.
    \end{equation*}
    Finally, we apply the standard wave-packet estimate as in \textit{Lemma 2.1} of \cite{thiele2006wave} and utilize the physical separation to complete the proof
    \begin{equation*}
        \sum_{
            \substack{
                \br{\alpha',\vbeta'}\in \P\\
                d_\Gamma\br{\vbeta}\leq d_\Gamma\br{\vbeta'}
            }
        }
            \frac{
                \abs{
                    \ang{
                        \varphi_{\alpha,\vbeta}^{j},
                        \varphi_{\alpha',\vbeta'}^{j}
                    }
                }
            }{d_\Gamma\br{\vbeta'}}
            \lesssim \!\!\!
        \sum_{
            \substack{
                \br{\alpha',\vbeta'}\in \P\\
                d_\Gamma\br{\vbeta}\leq d_\Gamma\br{\vbeta'}
            }
        }
            \frac{d_\Gamma\br{\vbeta}}{d_\Gamma\br{\vbeta'}}
            \br{1+d_\Gamma\br{\vbeta}\abs{\alpha-\alpha'}}^{-N}
     \end{equation*}
    \begin{equation*}
        \sim 
        1+
        \sum_{
            \substack{
                \br{\alpha',\vbeta'}\in \P\setminus\BR{\br{\alpha,\vbeta}}\\
                d_\Gamma\br{\vbeta}\leq d_\Gamma\br{\vbeta'}
            }
        }
            \int_{\alpha'+\frac{\Br{-1,1}}{d_\Gamma\br{\vbeta'}}}
                d_\Gamma\br{\vbeta}
                \br{1+d_\Gamma\br{\vbeta}\abs{\alpha-x}}^{-N}
            dx
    \end{equation*}
    \begin{equation*}
        \lesssim 
        1+
        \int_{\R}
            d_\Gamma\br{\vbeta}
            \br{1+d_\Gamma\br{\vbeta}\abs{\alpha-x}}^{-N}
        dx
        =
        1+\int_\R \br{1+\abs{x}}^{-N}dx
        \sim 1.
     \end{equation*}
\end{proof}

\begin{lemma}\label{lem_L2_est_lac_scale_bd}
    We define a constant
    \begin{equation}
        c:=\frac{\frac{4\varepsilon}{10}+\frac{1}{\delta_1}}{\delta_2-\frac{4\varepsilon}{10}}<\frac{10\rho}{4\epsilon}.
    \end{equation}
    Let \(\vgamma,\vgamma'\in\Gamma\) satisfy \(\gamma_j\leq\gamma'_j\). For \(\vbeta\in \br{W_{\vgamma,0}\setminus U_{\vgamma}^j}^{<j}\) and \(\vbeta'\in  W_{\vgamma',0}\), suppose
    \begin{equation}
        \abs{\beta_j-\beta_j'}\leq \frac{4\varepsilon}{10}\br{d_\Gamma\br{\vbeta}+d_\Gamma\br{\vbeta'}},
    \end{equation}
    then
    \begin{equation}
        d_\Gamma\br{\vbeta}\leq c d_\Gamma\br{\vbeta'}.
        \label{eq_lac_scale_lbd}
    \end{equation}
    If additionally that \(\vbeta'\notin U_{\vgamma'}^j\),
    \begin{equation}
        \br{\delta_2-\frac{4\varepsilon }{10}\br{1+c}}d_\Gamma\br{\vbeta'}
        \leq 
        \abs{\beta_j-\gamma_j'}
        \leq
        \br{\frac{1}{\delta_1}+\frac{4\varepsilon}{10}\br{1+c}}d_\Gamma\br{\vbeta'}.
        \label{eq_lac_scale_eqbd}
    \end{equation}
\end{lemma}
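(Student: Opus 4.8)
The plan is to reduce everything to one-dimensional comparisons in the $j$-th coordinate by unwinding the defining inequalities of $W_{\vgamma,0}$ and $U_{\vgamma}^j$, and then to chain these with the triangle inequality and the standing hypothesis $\abs{\beta_j-\beta_j'}\le\frac{4\varepsilon}{10}\br{d_\Gamma\br{\vbeta}+d_\Gamma\br{\vbeta'}}$.

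First I would record the consequences of the membership hypotheses. From $\vbeta\in W_{\vgamma,0}$ one has $\delta_1\abs{\vbeta-\vgamma}\le d_\Gamma\br{\vbeta}\le\abs{\vbeta-\vgamma}$; from $\vbeta\notin U_\vgamma^j$ one has $\abs{\beta_j-\gamma_j}>\delta_2\abs{\vbeta-\vgamma}\ge\delta_2 d_\Gamma\br{\vbeta}$; and the superscript ${}^{<j}$ gives $\beta_j<\gamma_j$, so that together with the standing assumption $\gamma_j\le\gamma_j'$ we get $\gamma_j'-\beta_j\ge\gamma_j-\beta_j>\delta_2 d_\Gamma\br{\vbeta}$ (in particular $\abs{\beta_j-\gamma_j'}=\gamma_j'-\beta_j$). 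Symmetrically, $\vbeta'\in W_{\vgamma',0}$ gives $\abs{\vbeta'-\vgamma'}\le\frac1{\delta_1}d_\Gamma\br{\vbeta'}$, and since $\vbeta'-\vgamma'\in V$ the Cauchy--Schwarz bound $\abs{\beta_j'-\gamma_j'}=\abs{\ang{\vbeta'-\vgamma',e_j}}\le\abs{\vbeta'-\vgamma'}$ yields $\abs{\beta_j'-\gamma_j'}\le\frac1{\delta_1}d_\Gamma\br{\vbeta'}$.

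For \eqref{eq_lac_scale_lbd} I would split $\gamma_j'-\beta_j=\br{\gamma_j'-\beta_j'}+\br{\beta_j'-\beta_j}$, bound $\abs{\gamma_j'-\beta_j'}\le\frac1{\delta_1}d_\Gamma\br{\vbeta'}$ and $\abs{\beta_j'-\beta_j}\le\frac{4\varepsilon}{10}\br{d_\Gamma\br{\vbeta}+d_\Gamma\br{\vbeta'}}$, and compare with the lower bound $\delta_2 d_\Gamma\br{\vbeta}$ just obtained; this gives $\br{\delta_2-\frac{4\varepsilon}{10}}d_\Gamma\br{\vbeta}<\br{\frac1{\delta_1}+\frac{4\varepsilon}{10}}d_\Gamma\br{\vbeta'}$. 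Since $\varepsilon=\frac{\delta_1\rho^2}{2}$ with $\rho<\delta_2<1$ and $\delta_1<1$, one has $\frac{4\varepsilon}{10}<\delta_2$, so the denominator is positive and dividing gives $d_\Gamma\br{\vbeta}<c\,d_\Gamma\br{\vbeta'}$. The asserted numerical inequality $c<\frac{10\rho}{4\epsilon}=\frac{5}{\delta_1\rho}$ follows from the same elementary bounds ($c<\frac{4}{\delta_1\delta_2}$ while $4\rho<5\delta_2$), so I would dispense with it in a line.

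For \eqref{eq_lac_scale_eqbd}, the extra hypothesis $\vbeta'\notin U_{\vgamma'}^j$ upgrades the estimate on $\abs{\beta_j'-\gamma_j'}$ to the two-sided bound $\delta_2 d_\Gamma\br{\vbeta'}<\abs{\beta_j'-\gamma_j'}\le\frac1{\delta_1}d_\Gamma\br{\vbeta'}$. I would then apply the triangle inequality both ways, $\abs{\beta_j-\gamma_j'}\le\abs{\beta_j'-\gamma_j'}+\abs{\beta_j-\beta_j'}$ and $\abs{\beta_j-\gamma_j'}\ge\abs{\beta_j'-\gamma_j'}-\abs{\beta_j-\beta_j'}$, insert this two-sided bound together with the standing hypothesis on $\abs{\beta_j-\beta_j'}$, and finally use \eqref{eq_lac_scale_lbd} to replace $d_\Gamma\br{\vbeta}+d_\Gamma\br{\vbeta'}$ by $\br{1+c}d_\Gamma\br{\vbeta'}$; this produces \eqref{eq_lac_scale_eqbd} verbatim. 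There is no genuine analytic obstacle: the only things requiring care are the sign bookkeeping (that $\beta_j<\gamma_j\le\gamma_j'$, so $\abs{\beta_j-\gamma_j'}=\gamma_j'-\beta_j$, while the sign of $\beta_j'-\gamma_j'$ is a priori unknown) and the one-time check that the fixed value $\varepsilon=\frac{\delta_1\rho^2}{2}$ keeps $\delta_2-\frac{4\varepsilon}{10}$ positive.
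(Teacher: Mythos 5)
Your proposal is correct and follows essentially the same route as the paper: a two-sided estimate of $\abs{\beta_j-\gamma_j'}$, bounded below by $\delta_2 d_\Gamma(\vbeta)$ using the ordering $\beta_j<\gamma_j\le\gamma_j'$ together with $\vbeta\notin U_\vgamma^j$, and above by the triangle inequality through $\beta_j'$ using $\vbeta'\in W_{\vgamma',0}$ and the overlap hypothesis, then feeding \eqref{eq_lac_scale_lbd} back in for the second claim. The only (harmless) additions are your explicit checks that $\delta_2-\frac{4\varepsilon}{10}>0$ and that $c<\frac{10\rho}{4\varepsilon}$, which the paper leaves implicit.
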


\begin{proof}
    We first show \eqref{eq_lac_scale_lbd} via estimating \(\abs{\beta_j-\gamma_j'}\) from above and from below. On the one hand, the triangle inequality gives
    \begin{equation}
        \abs{\beta_j-\gamma_j'}\leq 
        \abs{\beta_j-\beta_j'}+
        \abs{\beta_j'-\gamma_j'}.\label{eq_lac_scale_lbd_tria}
    \end{equation}
    The point \(\vbeta'\in W_{\vgamma',0}\) satisfies
    \begin{equation}
        \abs{\beta_j'-\gamma_j'}\leq\abs{\vbeta'-\vgamma'}\leq \frac{1}{\delta_1}d_\Gamma\br{\vbeta'},
        \label{eq_lac_scale_lbd_ubd}
    \end{equation}
    which controls the second term on the right-hand side of \eqref{eq_lac_scale_lbd_tria}. We   control the first term on the right-hand side of \eqref{eq_lac_scale_lbd_tria}
    \begin{equation*}
        \abs{\beta_j-\beta_j'}\leq \frac{4\varepsilon}{10}\br{d_\Gamma\br{\vbeta}+d_\Gamma\br{\vbeta'}}.
    \end{equation*}
   Hence, we obtain the following
    \begin{equation}\label{eq923}
        \abs{\beta_j-\gamma'_j}\leq \frac{4\varepsilon}{10} d_\Gamma\br{\vbeta}+\br{\frac{4\varepsilon}{10}+\frac{1}{\delta_1}}d_\Gamma\br{\vbeta'}.
    \end{equation}
    On the other hand, by assumption, the three points \(\vbeta,\vgamma,\vgamma'\) satisfy the ordering relation \(\beta_j\leq \gamma_j\leq \gamma_j'\), and thus,
    \begin{equation}\label{eq924}
        \abs{\beta_j-\gamma_j'}=\abs{\beta_j-\gamma_j}+\abs{\gamma_j-\gamma_j'}\geq \abs{\beta_j-\gamma_j}.
    \end{equation}
    Since \(\vbeta\in \br{W_{\vgamma,0}\setminus U_{\vgamma}^j}^{<j}\subseteq \br{U_\vgamma^j}^c\), the above quantity can be estimated from below by
    \begin{equation}\label{eq925}
       \delta_2\abs{\vbeta-\vgamma}\geq \delta_2 d_\Gamma\br{\vbeta}.
    \end{equation}
    Combining \eqref{eq923}, \eqref{eq924}, and \eqref{eq925}, we obtain
    \begin{equation*}
        \delta_2 d_\Gamma\br{\vbeta}\leq 
        \abs{\beta_j-\gamma_j'}
        \leq \frac{4\varepsilon}{10} d_\Gamma\br{\vbeta}+\br{\frac{4\varepsilon}{10}+\frac{1}{\delta_1}}d_\Gamma\br{\vbeta'}.
    \end{equation*}
    This completes the proof for \eqref{eq_lac_scale_lbd}. To show \eqref{eq_lac_scale_eqbd}, we further assume \(\vbeta'\notin U_{\gamma'}^j\) and obtain a lower bound similar to \eqref{eq_lac_scale_lbd_ubd}
    \begin{equation*}
        \delta_2 d_\Gamma\br{\vbeta'}\leq \delta_2\abs{\vbeta'-\vgamma'}\leq \abs{\beta_j'-\gamma_j'}.
    \end{equation*}
   By triangle inequality 
   \begin{equation*}
   \abs{\beta_j'-\gamma_j'}-\abs{\beta_j-\beta_j'}\leq \abs{\beta_j-\gamma_j'}\leq \abs{\beta'_j-\gamma_j'}+\abs{\beta_j-\beta_j'}.
   \end{equation*}
   Hence, we obtain
    \begin{equation*}
        \delta_2 d_\Gamma\br{\vbeta'}-\abs{\beta_j-\beta_j'}\leq \abs{\beta_j-\gamma_j'} \leq \frac{1}{\delta_1}d_\Gamma\br{\vbeta'}+\abs{\beta_j-\beta_j'}.
    \end{equation*}
    It remains to control \(\abs{\beta_j-\beta_j'}\) from above with a small constant multiple of \(d_\Gamma\br{\vbeta'}\), which can be achieved via utilizing the consequence of \eqref{eq_lac_scale_lbd}
    \begin{equation*}
        \abs{\beta_j-\beta_j'}\leq \frac{4\varepsilon}{10} \br{d_\Gamma\br{\vbeta}+d_\Gamma\br{\vbeta'}}\leq \frac{4\varepsilon}{10}\cdot\br{1+c}d_\Gamma\br{\vbeta'}.
    \end{equation*}
\end{proof}

\begin{lemma}\label{prop_L2_comp_est}
    Let \(\Omega,\lambda,\S\) and \(f\) be as in the Proposition \ref{prop_selectL2}. Assume $  1_\Omega\abs{F_jf}\leq \lambda$, then we have the following bound
    \begin{equation}
        \sum_{\br{I,\vgamma,S}\in\S} \nrm{F_jf}_{L^2_{\nu}\br{S}}^2\lesssim \nrm{f}^2_{L^2}.
    \end{equation}
\end{lemma}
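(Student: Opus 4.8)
The plan is to run a $TT^{*}$ argument in the spirit of Lemma~\ref{prop_Linfty_comp_est}, with the ``diagonal'' absorbed by the local $L^{2}$ estimate \eqref{eq_global_L2_g_est} and the ``off-diagonal'' handled by an almost-orthogonality estimate resting on Lemma~\ref{lem_L2_est_lac_scale_bd} and the separation in Proposition~\ref{prop_selectL2}. First I would record that the sets $S$ appearing among the triples of $\S$ are pairwise disjoint: two triples from the same strip $\Gamma^{k}$ are supported over intervals $I,J$ with $5I\cap 5J=\varnothing$, while if $(I,\vgamma,S)$ comes from strip $k$ and $(I',\vgamma',S')$ from a later strip $k'>k$, then $I\times\br{W_{\vgamma,1/\abs{I}}\setminus U_{\vgamma}^{j}}^{<j}\subseteq D_{(\delta_{1}^{-1}I,\vgamma)}$, a tent removed in passing from $\Omega_{k-1}$ to $\Omega_{k}\supseteq\Omega_{k'-1}\supseteq S'$. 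Hence $X:=\sum_{(I,\vgamma,S)\in\S}\nrm{F_{j}f}_{L^{2}_{\nu}(S)}^{2}=\nrm{F_{j}f}_{L^{2}_{\nu}(\bigsqcup S)}^{2}$, and since $\bigsqcup S\subseteq\Omega$, $\Omega$ is compact in $\R\times(V\setminus\Gamma)$ and $\abs{F_{j}f}\le\nrm{f}_{L^{\infty}}$, this is finite; under the standing hypothesis it is moreover $\le\lambda^{2}\nu(\Omega)$.

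Recalling from the proof of Proposition~\ref{prop_Linfty_emb} that $F_{j}f(\alpha,\vbeta)=\ang{f,\varphi_{\alpha,\vbeta}^{j}}$ with $\varphi_{\alpha,\vbeta}^{j}=\Tr_{\alpha}\Mod_{-\beta_{j}}\Dil^{1}_{d_{\Gamma}(\vbeta)^{-1}}\overline{\varphi}$ an $L^{1}$-normalized wave packet, I would set $G:=\sum_{(I,\vgamma,S)\in\S}\int_{S}F_{j}f(\alpha,\vbeta)\,\varphi_{\alpha,\vbeta}^{j}\,d\nu=:\sum_{(I,\vgamma,S)}G_{S}$, so that $X=\ang{f,G}\le\nrm{f}_{L^{2}}\nrm{G}_{L^{2}}$; since $X<\infty$ it then suffices to prove $\nrm{G}_{L^{2}}^{2}\lesssim X$. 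Expanding the square, $\nrm{G}_{L^{2}}^{2}=\sum_{(I,\vgamma,S)}\nrm{G_{S}}_{L^{2}}^{2}+\sum_{S\neq S'}\ang{G_{S},G_{S'}}$. For the diagonal, $G_{S}$ is obtained by applying to $1_{S}F_{j}f$ the adjoint of the restriction to $S$ of $g\mapsto F_{j}g$, which by \eqref{eq_global_L2_g_est} (using $S\subseteq\R\times(W_{\vgamma,0}\setminus U_{\vgamma}^{j})$) is bounded $L^{2}(\R)\to L^{2}_{\nu}(S)$; hence $\nrm{G_{S}}_{L^{2}}\lesssim\nrm{F_{j}f}_{L^{2}_{\nu}(S)}$ and $\sum_{S}\nrm{G_{S}}_{L^{2}}^{2}\lesssim X$.

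For the off-diagonal I would bound $\abs{\ang{G_{S},G_{S'}}}\le\int_{S}\int_{S'}\abs{F_{j}f(\alpha,\vbeta)}\abs{F_{j}f(\alpha',\vbeta')}\abs{\ang{\varphi_{\alpha,\vbeta}^{j},\varphi_{\alpha',\vbeta'}^{j}}}\,d\nu\,d\nu'$, use the symmetry of the double sum to restrict to $\gamma_{j}\le\gamma_{j}'$, and split the product of the two coefficients by $2ab\le a^{2}+b^{2}$. After rearranging the two resulting sums the problem reduces to the Schur-type bound
\begin{equation*}
\sup_{(\alpha,\vbeta)\in S}\ \sum_{\substack{(I',\vgamma',S')\in\S\\\gamma_{j}'\ge\gamma_{j}}}\ \int_{S'}\abs{\ang{\varphi_{\alpha,\vbeta}^{j},\varphi_{\alpha',\vbeta'}^{j}}}\,d\nu'(\alpha',\vbeta')\ \lesssim\ 1
\end{equation*}
together with its mirror image in the two variables. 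To prove it I would discard the vanishing terms; on the surviving ones the frequency supports overlap, i.e.\ $\abs{\beta_{j}-\beta_{j}'}\le\tfrac{4\varepsilon}{10}(d_{\Gamma}(\vbeta)+d_{\Gamma}(\vbeta'))$, and since $\varepsilon=\tfrac{\delta_{1}\rho^{2}}{2}$ forces $\tfrac{4\varepsilon}{10}<\rho$, the alternative \eqref{select2-32} must fail for the pair $(\alpha,\vbeta),(\alpha',\vbeta')$, so Proposition~\ref{prop_selectL2} yields \eqref{select2-31}, i.e.\ $\abs{\alpha-\alpha'}\ge 2\abs{I}$. As $(\alpha,\vbeta)\in\br{W_{\vgamma,0}\setminus U_{\vgamma}^{j}}^{<j}$, $\vbeta'\in W_{\vgamma',0}\setminus U_{\vgamma'}^{j}$ and $\gamma_{j}\le\gamma_{j}'$, Lemma~\ref{lem_L2_est_lac_scale_bd} supplies $d_{\Gamma}(\vbeta)\le c\,d_{\Gamma}(\vbeta')$ and $\abs{\beta_{j}-\gamma_{j}'}\sim d_{\Gamma}(\vbeta')$, so the standard wave-packet estimate (Lemma~2.1 of \cite{thiele2006wave}) gives $\abs{\ang{\varphi_{\alpha,\vbeta}^{j},\varphi_{\alpha',\vbeta'}^{j}}}\lesssim d_{\Gamma}(\vbeta)\br{1+d_{\Gamma}(\vbeta)\abs{\alpha-\alpha'}}^{-N}$. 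One then sums over the contributing triples using: the pairwise disjointness of the $S'$; that $\abs{\beta_{j}-\gamma_{j}'}\sim d_{\Gamma}(\vbeta')$ confines $\vbeta'$ to a thin slice of an annulus about $\vgamma'$, controlling the $d\mu'$-integral on each triple by polar coordinates; and that distinct contributing triples are themselves $\alpha'$-separated at their own scale (again via the $\varepsilon$-smallness feeding \eqref{select2-31}), so that the remaining sum telescopes into $\lesssim d_{\Gamma}(\vbeta)\int_{\abs{\alpha'-\alpha}\gtrsim d_{\Gamma}(\vbeta)^{-1}}(1+d_{\Gamma}(\vbeta)\abs{\alpha-\alpha'})^{-N}\,d\alpha'\lesssim 1$, with the mirror bound obtained identically. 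Combining the diagonal and off-diagonal estimates gives $\nrm{G}_{L^{2}}^{2}\lesssim X$, hence $X\lesssim\nrm{f}_{L^{2}}^{2}$.

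I expect the main obstacle to be exactly the bookkeeping in this last step — organizing a sum that is uncountable inside each tent and countable across tents. One must deploy the orthogonality \eqref{select2-31}--\eqref{select2-32}, together with the smallness of $\varepsilon$, both between the fixed point $(\alpha,\vbeta)$ and all contributing tents and among those tents themselves, in order to recover enough disjointness in space and frequency that the total collapses to a single convergent integral, along the lines of the counting arguments in \cite{do2015lp} and \cite{muscalu2013classical}.
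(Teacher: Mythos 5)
Your overall architecture matches the paper's: a $TT^{*}$ expansion, a diagonal/off-diagonal split, Lemma~\ref{lem_L2_est_lac_scale_bd} plus the wave-packet estimate for the kernel, and the frequency-overlap-forces-\eqref{select2-31} mechanism for separation. Your diagonal treatment (realizing $G_S$ as the adjoint of the embedding $g\mapsto F_jg|_S$ and invoking \eqref{eq_global_L2_g_est}) is a clean and correct alternative to the paper's Cauchy--Schwarz reduction, and your ``good-direction'' Schur bound $\sup_{(\alpha,\vbeta)}\sum_{S':\gamma_j'\geq\gamma_j}\int_{S'}\abs{\ang{\varphi^j_{\alpha,\vbeta},\varphi^j_{\alpha',\vbeta'}}}d\nu'\lesssim 1$ is genuinely true (the kernel after the wave-packet bound depends only on $\alpha'$, each $\mu(P_V(S'))\lesssim 1$ by the annulus confinement, and the $P_\R(S')$ are disjoint).

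The gap is the ``mirror image'' bound, which you assert is ``obtained identically'' but is not. After $2ab\leq a^2+b^2$ the second term requires $\sup_{(\alpha',\vbeta')}\sum_{S:\gamma_j\leq\gamma_j'}\int_S\abs{\ang{\varphi^j_{\alpha,\vbeta},\varphi^j_{\alpha',\vbeta'}}}d\nu\lesssim 1$, where now the fixed point sits at the \emph{largest} scale and the sum runs over triples at arbitrarily \emph{smaller} scales. Two things break. First, the separation argument between two contributing triples $S,\widetilde S$ needs $\abs{\beta_j-\widetilde\beta_j}\leq\frac{4\varepsilon}{10}(d_\Gamma(\vbeta)+d_\Gamma(\widetilde\vbeta)+2d_\Gamma(\vbeta'))<\rho(d_\Gamma(\vbeta)+d_\Gamma(\widetilde\vbeta))$; absorbing $2d_\Gamma(\vbeta')$ requires a \emph{lower} bound on the small scales in terms of $d_\Gamma(\vbeta')$, and Lemma~\ref{lem_L2_est_lac_scale_bd} only gives the upper bound $d_\Gamma(\vbeta)\leq c\,d_\Gamma(\vbeta')$. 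So you cannot conclude \eqref{select2-31} between the summed triples, hence no disjointness of the $P_\R(S)$. Second, even granting disjointness, the kernel scale now varies with the integration variable: each contributing triple at scale $\delta$ contributes roughly $\mu(P_V(S))\lesssim 1$ after the $\alpha$-integral, while the frequency-overlap window $\abs{\beta_j-\beta_j'}\lesssim\varepsilon\,d_\Gamma(\vbeta')$ admits on the order of $\varepsilon\,d_\Gamma(\vbeta')/\delta$ disjoint triples at scale $\delta$; summing over dyadic $\delta\to 0$ diverges. The tell is that your argument never uses the hypothesis $1_\Omega\abs{F_jf}\leq\lambda$. The paper's symmetrization is designed around exactly this asymmetry: it uses $\abs{F_jf(\alpha,\vbeta)},\abs{F_jf(\alpha',\vbeta')}\leq\lambda\lesssim\abs{I}^{-1/2}\nrm{F_jf}_{L^2_\nu(S)}$ (the second inequality from \eqref{select2-2}) to place \emph{both} coefficient weights on the smaller-$\gamma_j$ triple $S$, so that the only Schur sum ever needed is the good-direction one, in the averaged form $\frac{1}{\abs{I}}\int_S\sum_{S'}\int_{S'}\abs{\ang{\varphi^j_{\alpha,\vbeta},\varphi^j_{\alpha',\vbeta'}}}\,d\nu'\,d\nu\lesssim 1$. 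Replacing your AM--GM step by this substitution repairs the proof and brings it back to the paper's argument.
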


\begin{proof}
    The following argument is parallel to the first parts of the proof of Lemma \ref{prop_Linfty_comp_est}
    \begin{equation*}
        \sum_{
            \br{I,\vgamma,S}\in \S
        }\nrm{F_j f}_{L^2_{\nu}\br{S}}^2
        = 
        \sum_{
            \br{I,\vgamma,S}\in \S
        }
            \int_S
                \ang{
                    f,
                    \varphi_{\alpha,\vbeta}^{j}
                }
                \overline{F_j f\br{\alpha,\vbeta}}
            d\alpha
            d\mu\br{\vbeta}
    \end{equation*}
    \begin{equation*}
        = 
        \ang{
            f,
            \sum_{
                \br{I,\vgamma,S}\in \S
            }
                \int_S
                    F_j f\br{\alpha,\vbeta}
                    \varphi_{\alpha,\vbeta}^{j}
                d\alpha d\mu\br{\vbeta}
        }
\end{equation*}
\begin{equation*}
        \leq 
        \nrm{f}_{L^2}
        \nrm{
            \sum_{
                \br{I,\vgamma,S}\in \S
            }
                \int_S
                    F_j f\br{\alpha,\vbeta}
                    \varphi_{\alpha,\vbeta}^{j}
                d\alpha d\mu\br{\vbeta}
        }_{L^2}.
    \end{equation*}
    Again, it suffices to show
    \begin{equation}\label{eq935}
        \nrm{
            \sum_{
                \br{I,\vgamma,S}\in \S
            }
                \int_S
                    F_j f\br{\alpha,\vbeta}
                    \varphi_{\alpha,\vbeta}^{j}
                d\alpha d\mu\br{\vbeta}
        }_{L^2}^2
        \lesssim
        \sum_{
            \br{I,\vgamma,S}\in \S
        }\nrm{F_j f}_{L^2_{\nu}\br{S}}^2.
    \end{equation}
   Developing the \(L^2\) norm, we can estimate the left-hand side of \eqref{eq935} by
    \begin{equation*}
        \leq 
        \sum_{
            \substack{
                \br{I,\vgamma,S}\in\S\\
                \br{I',\vgamma',S'}\in\S
            }
        }
            \int_{S}
                \abs{F_j f\br{\alpha,\vbeta}}
                \int_{S'}
                    \abs{F_j f\br{\alpha',\vbeta'}}
                    \cdot\abs{
                        \ang{
                            \varphi_{\alpha,\vbeta}^{j},
                            \varphi_{\alpha',\vbeta'}^{j}
                        }
                    }
                d\alpha' d\mu\br{\vbeta'}
            d\alpha d\mu\br{\vbeta}.
     \end{equation*}
    The summation can be further decomposed into diagonal terms
    \begin{equation}\label{eq_L2_est_diag}
        \sum_{
            \br{I,\vgamma,S}\in\S
        }
            \int_{S}
                \int_{S}
                    \abs{F_j f\br{\alpha,\vbeta}}
                    \abs{F_j f\br{\alpha',\vbeta'}}
            \cdot\abs{
                        \ang{
                            \varphi_{\alpha,\vbeta}^{j},
                            \varphi_{\alpha',\vbeta'}^{j}
                        }
                    }
                d\alpha' d\mu\br{\vbeta'}
            d\alpha d\mu\br{\vbeta}.
     \end{equation}
    and two copies of off-diagonal terms
    \begin{equation}\label{eq_L2_est_off_diag}
        \sum_{
            \substack{
                \br{I,\vgamma,S}\in\S\\
                \br{I',\vgamma',S'}\in\S\\
                \br{I,\vgamma,S}\neq
                \br{I',\vgamma',S'}\\
                \gamma_j\leq \gamma_j'
            }
        }
            \int_{S}
                \abs{F_j f\br{\alpha,\vbeta}}
                \int_{S'}
                    \abs{F_j f\br{\alpha',\vbeta'}}
            \cdot\abs{
                        \ang{
                            \varphi_{\alpha,\vbeta}^{j},
                            \varphi_{\alpha',\vbeta'}^{j}
                        }
                    }
                d\alpha' d\mu\br{\vbeta'}
            d\alpha d\mu\br{\vbeta}.
     \end{equation}
    We treat diagonal terms and off-diagonal terms differently. Starting with the diagonal terms \eqref{eq_L2_est_diag}, we rewrite the expression
    \begin{equation*}
        \sum_{
            \br{I,\vgamma,S}\in\S
        }
            \int_{S}
                \int_{S}
                    \abs{F_j f\br{\alpha,\vbeta}}
                    \abs{
                        \ang{
                            \varphi_{\alpha,\vbeta}^{j},
                            \varphi_{\alpha',\vbeta'}^{j}
                        }
                    }^{\frac{1}{2}}
    \end{equation*}
    \begin{equation*}
             \cdot
                    \abs{F_j f\br{\alpha',\vbeta'}}
                    \abs{
                        \ang{
                            \varphi_{\alpha,\vbeta}^{j},
                            \varphi_{\alpha',\vbeta'}^{j}
                        }
                    }^{\frac{1}{2}}
                d\alpha' d\mu\br{\vbeta'}
            d\alpha d\mu\br{\vbeta}.
    \end{equation*}
    Cauchy-Schwarz then gives
    \begin{equation*}
        \leq
        \sum_{
            \br{I,\vgamma,S}\in\S
        }
            \int_{S}
                \abs{F_j f\br{\alpha,\vbeta}}^2
                \int_{S}
                    \abs{
                        \ang{
                          \varphi_{\alpha,\vbeta}^{j},
                            \varphi_{\alpha',\vbeta'}^{j}
                        }
                    }
                d\alpha' d\mu\br{\vbeta'}
            d\alpha d\mu\br{\vbeta}.
    \end{equation*}
    Controlling \eqref{eq_L2_est_diag} is reduced to showing that for all $\br{\alpha,\vbeta}\in S$,
    \begin{equation}\label{eq_L2_est_diag_core}
        \int_{S}
            \abs{
                \ang{
                   \varphi_{\alpha,\vbeta}^{j},
                    \varphi_{\alpha',\vbeta'}^{j}
                }
            }
        d\alpha' d\mu\br{\vbeta'}\lesssim 1.
    \end{equation}
    For the off-diagonal terms \eqref{eq_L2_est_off_diag}, we observe that
    \begin{equation*}
        \abs{F_j f \br{\alpha,\vbeta}},
        \abs{F_j f \br{\alpha',\vbeta'}}
        \leq \lambda
        \lesssim 
        \abs{I}^{-\frac{1}{2}}
        \nrm{F_j f}_{L^2_{\nu}\br{S}}.
    \end{equation*}
    After substitution, we can dominate \eqref{eq_L2_est_off_diag} by
    \begin{equation*}
        \sum_{
            \br{I,\vgamma,S}\in\S
        }
            \nrm{F_j f}_{L^2_{\nu}\br{S}}^2
        \cdot
            \frac{1}{\abs{I}}
            \int_{S}
                \sum_{
                    \substack{
                        \br{I',\vgamma',S'}\in\S\\
                        \br{I,\vgamma,S}\neq
                        \br{I',\vgamma',S'}\\
                        \gamma_j\leq \gamma_j'
                    }
                }
                \int_{S'}
                    \abs{
                        \ang{
                            \varphi_{\alpha,\vbeta}^{j},
                            \varphi_{\alpha',\vbeta'}^{j}
                        }
                    }
                d\alpha' d\mu\br{\vbeta'}
            d\alpha d\mu\br{\vbeta}.
      \end{equation*}
    Controlling \eqref{eq_L2_est_off_diag} is reduced to showing for \(\br{I,\vgamma,S}\in\S\),
    \begin{equation}\label{eq_L2_est_off_diag_core}
        \frac{1}{\abs{I}}
            \int_{S}
                \sum_{
                    \substack{
                        \br{I',\vgamma',S'}\in\S\\
                        \br{I,\vgamma,S}\neq
                        \br{I',\vgamma',S'}\\
                        \gamma_j\leq \gamma_j'
                    }
                }
                \int_{S'}
                    \abs{
                        \ang{
                            \varphi_{\alpha,\vbeta}^{j},
                            \varphi_{\alpha',\vbeta'}^{j}
                        }
                    }
                d\alpha' d\mu\br{\vbeta'}
            d\alpha d\mu\br{\vbeta}\lesssim 1.
    \end{equation}
    To prove \eqref{eq_L2_est_diag_core} and control the inner-most integral of \eqref{eq_L2_est_off_diag_core},
    we first focus on a fixed \(\br{\alpha,\vbeta}\in S\) and discard all points \(\br{\alpha',\vbeta'}\) in \(S'\) (in \eqref{eq_L2_est_diag_core}, we take \(S'=S\)) that don't contribute to the integral.
   Therefore, we may assume
    \begin{equation*}
        \ang{
            \varphi_{\alpha,\vbeta}^{j},
            \varphi_{\alpha',\vbeta'}^{j}
        }\neq 0.
    \end{equation*}
    This implies the frequency supports of the functions overlap, and thus, 
    \begin{equation*}
        \abs{\beta_j-\beta'_j}\leq \frac{4\varepsilon}{10}\br{d_\Gamma\br{\vbeta}+d_\Gamma\br{\vbeta'}}.
    \end{equation*}
    Recall that \(S\subseteq I\times W_{\vgamma,\frac{1}{\abs{I}}}^{<j}\) and \(S'\subseteq I'\times W_{\vgamma',\frac{1}{\abs{I'}}}^{<j}\). Since \(\gamma_j\leq\gamma_j'\), Lemma \ref{lem_L2_est_lac_scale_bd} guarantees the following relations in both \eqref{eq_L2_est_diag_core} and \eqref{eq_L2_est_off_diag_core}
    \begin{equation}\label{eq_L2_est_scale_bd}\\
         d_\Gamma\br{\vbeta}\leq  c d_\Gamma\br{\vbeta'},
    \end{equation}
    \begin{equation}\label{eq_L2_est_scale_eq}
        \abs{\beta_j-\gamma_j'}
        \sim 
        d_\Gamma\br{\vbeta'}
        \sim 
        \abs{\vbeta'-\vgamma'}.
     \end{equation} 
    On the one hand, due to the relation \eqref{eq_L2_est_scale_bd}, the standard wave-packet estimate gives
    \begin{equation*}
        \abs{
            \ang{
                \varphi_{\alpha,\vbeta}^{j},
                \varphi_{\alpha',\vbeta'}^{j}
            }
        }
        \lesssim
        d_\Gamma\br{\vbeta}\br{1+d_\Gamma\br{\vbeta}\abs{\alpha-\alpha'}}^{-N}.
    \end{equation*}
    On the other hand, the relation \eqref{eq_L2_est_scale_eq} implies that
    \begin{equation*}
        \mu\br{P_V\br{S'}}\! \leq  \! \!\int_{\left\{\vbeta'\in W_{\vgamma',0}\middle\vert \abs{\beta_j-\gamma'_j}\sim \abs{\vbeta'-\vgamma'}\right\}}
        \frac{d\mathcal{H}^2\br{\vbeta'}}{d_\Gamma\br{\vbeta'}^2}
        \lesssim \!
        \fint_{\left\{\vbeta'\in V\middle\vert \abs{\beta_j-\gamma'_j}\sim \abs{\vbeta'-\vgamma'}\right\}}
        d\mathcal{H}^2\br{\vbeta'}=1.
     \end{equation*}
    In combination, we obtain
    \begin{equation*}
        \int_{S'}
            \abs{
                \ang{
                   \varphi_{\alpha,\vbeta}^{j},
                    \varphi_{\alpha',\vbeta'}^{j}
                }
            }
        d\alpha' d\mu\br{\vbeta'}
        \!\lesssim  \!
        \int_{P_V\br{S'}}
            \int_{P_\R\br{S'}}
            d_\Gamma\br{\vbeta}\br{1+d_\Gamma\br{\vbeta}\abs{\alpha-\alpha'}}^{-N}
        d\alpha' d\mu\br{\vbeta'}
    \end{equation*}
    \begin{equation*}
        \lesssim 
        \int_{P_\R\br{S'}}
            d_\Gamma\br{\vbeta}\br{1+d_\Gamma\br{\vbeta}\abs{\alpha-\alpha'}}^{-N}
        d\alpha'\leq \int_\R \br{1+\abs{x}}^{-N}dx\sim 1
     \end{equation*}
    In particular, this proves \eqref{eq_L2_est_diag_core} by taking \(S'=S\). To address the issue with summation involved in \eqref{eq_L2_est_off_diag_core}, we utilize the relation \eqref{eq_L2_est_scale_bd} and infer that
    all points in \(\br{\alpha'',\vbeta''}\in S''\)
    from any other \(\br{I'',\vgamma'',S''}\) that contribute to the integral in \eqref{eq_L2_est_off_diag_core} satisfy also the following relation
    \begin{equation*}
        \abs{\beta_j'-\beta_j''}
        \leq 
        \abs{\beta_j-\beta_j'}+
        \abs{\beta_j-\beta_j''}
        \leq 
        \frac{4\varepsilon}{10}\br{ 2d_\Gamma\br{\vbeta} + d_\Gamma\br{\vbeta'}+d_\Gamma\br{\vbeta''}}
    \end{equation*}
    \begin{equation*}
        \leq  \frac{4\varepsilon}{10} \cdot\br{1+c}\br{d_\Gamma\br{\vbeta'}+d_\Gamma\br{\vbeta''}}<
        \rho \br{d_\Gamma\br{\vbeta'}+d_\Gamma\br{\vbeta''}}.
     \end{equation*}
    This forces that all pairs of points \(\br{\alpha',\vbeta'}\in S'\) and \(\br{\alpha'',\vbeta''}\in S''\) with distinct \(\br{I',\vgamma',S'}\) and \(\br{I'',\vgamma'',S''}\) in \eqref{eq_L2_est_off_diag_core} have their physical components separated in the following way
    \begin{equation*}
        \abs{\alpha-\alpha'}, \abs{\alpha-\alpha''}\geq 2\abs{I},\quad
        \abs{\alpha'-\alpha''}\geq 2 \br{\abs{I'}\wedge \abs{I''}}>0.
    \end{equation*}
    In other words, the sets \(3I\), \(P_\R\br{S'}\), and
    \(P_\R\br{S''}\) are disjoint from one another.
    Finally, we combine the result above,
    \begin{align*}
        &
        \frac{1}{\abs{I}}
        \int_S 
            \sum_{
                \substack{
                    \br{I',\vgamma',S'}\in\S\\
                    \br{I,\vgamma,S}\neq\br{I',\vgamma',S'}\\
                    \gamma_j\leq \gamma_j'
                }
            }
                \int_{S'}
                    \abs{
                        \ang{
                            \varphi_{\alpha,\vbeta}^{j},
                          \varphi_{\alpha',\vbeta'}^{j}
                        }
                    }
                d\alpha' d\mu\br{\vbeta'}
        d\alpha d\mu\br{\vbeta}\\
        \lesssim &
        \frac{1}{\abs{I}}
        \int_S 
            \sum_{
                \substack{
                    \br{I',\vgamma',S'}\in\S\\
                    \br{I,\vgamma,S}\neq\br{I',\vgamma',S'}\\
                    \gamma_j\leq \gamma_j'
                }
            }
            \int_{\pi_\R\br{S'}}
                d_\Gamma\br{\vbeta}\br{1+d_\Gamma\br{\vbeta}\abs{\alpha-\alpha'}}^{-N}
            d\alpha'
        d\alpha d\mu\br{\vbeta}\\
        \leq &
        \fint_I
            \int_{W_{\gamma,\frac{1}{\abs{I}}}}
                \int_{3I^c}
                    d_\Gamma\br{\vbeta}
                    \br{1+d_\Gamma\br{\vbeta}\abs{\alpha-\alpha'}}^{-N}
                d\alpha'
            d\mu\br{\vbeta}
        d\alpha\\
        \lesssim &
        \int_{W_{\gamma,\frac{1}{\abs{I}}}}
            \br{1+d_\Gamma\br{\vbeta}\abs{I}}^{2-N}
            \fint_I
                \int_{3I^c}
                    d_\Gamma\br{\vbeta}
                    \br{1+d_\Gamma\br{\vbeta}\abs{\alpha-\alpha'}}^{-2}
                d\alpha'
            d\alpha
        d\mu\br{\vbeta}\\
        \lesssim &
        \int_{W_{\gamma,\frac{1}{\abs{I}}}}
            \br{d_\Gamma\br{\vbeta}\abs{I}}^{2-N}
        d\mu\br{\vbeta}\\
        \lesssim &
        \int_{B\br{\vgamma,\frac{1}{\abs{I}}}^c}
            \abs{I}^{-N}\abs{\vbeta-\vgamma}^{-N}
            \abs{I}^2
        d\mathcal{H}^2\br{\vbeta}
        =\int_{B\br{0,1}^c}
            \abs{\vbeta}^{-N}
        d\mathcal{H}^2\br{\vbeta}\sim 1.
    \end{align*}
\end{proof}

We close this section with the proof of Proposition \ref{prop_L2_emb}.
\begin{equation}
    \Omega \cap |F_{j}f|^{-1}(\lambda ,\infty)=\bigsqcup_{k\in \mathbb{N}}\Omega \cap |F_{j}f|^{-1}(2^{k-1}\lambda ,2^{k}\lambda]
\end{equation}
For each $k$, apply Proposition \ref{prop_select} to the set \(\Omega \cap |F_{j}f|^{-1}(2^{k-1}\lambda,2^{k}\lambda]\) with the threshold $2^{k-1}\lambda$, we obtain a set of countable points $\P_{k}$ and a countable collection of tents $\T_{k}$ satisfying the properties stated in Proposition \ref{prop_select}. Define
$\P_{\infty}=\underset{k\in \mathbb{N}}{\bigcup}\P_{k}$ and $\T_{\infty}=\underset{k\in \mathbb{N}}{\bigcup}\T_{k}$. By \eqref{select1-2},
\begin{equation}\label{eq875}
    \sum_{(I,\vgamma)\in \T_{\infty}}|I|\sim \sum_{(\alpha,\vbeta)\in \P_{\infty}}d_{\Gamma}(\vbeta)^{-1}\lesssim \sum_{k\in \mathbb{N}}\sum_{(\alpha,\vbeta)\in \P_{k}}\frac{|F_{j}f(\alpha,\vbeta)|^{2}}{d_{\Gamma}(\vbeta)(2^{k-1}\lambda)^{2}}.
\end{equation}
By Lemma \ref{prop_Linfty_comp_est}, we can further bound \eqref{eq875} by
\begin{equation}
    \sum_{k\in \mathbb{N}}2^{-2(k-1)}\lambda^{-2}\sum_{(\alpha,\vbeta)\in \P_{k}}\frac{|F_{j}f(\alpha,\vbeta)|^{2}}{d_{\Gamma}(\vbeta)}\lesssim \frac{\|f\|_{L^{2}}^{2}}{\lambda^{2}}
\end{equation}
As a direct consequence of \eqref{select1-1}, we have
\begin{equation}\label{eq_Linfty_comp_threshold_less_lambda}
    \nrm{
        1_{\Omega\setminus \bigcup_{T\in\T_\infty}D_T} F_j f
    }_{L^\infty}\leq \lambda.
\end{equation}
Next, define $\widetilde{\Omega}=\Omega \setminus \underset{T\in \T_{\infty}}{\bigcup}D_{T} \subseteq |F_{j}f|^{-1}[0,\lambda]$. Applying Proposition \ref{prop_selectL2} to $\widetilde{\Omega}$, we obtain a countable collection $\mathbf{T}_{left}$ of tents and a countable collection $\mathbf{S}_{left}$ of the form \(\br{I,\vgamma, S}\) with $(I,\vgamma)$ a tent and $S$ a measurable subset of $ \Omega\cap I\times \left(W_{\vgamma,\frac{1}{\abs{I}}}\setminus U_{\vgamma}^{j}\right)^{<j}$ satisfying Properties stated in Proposition \ref{prop_selectL2}. By Lemma \ref{prop_L2_comp_est},
\begin{equation}\label{eq877}
    \sum_{(I,\vgamma)\in \T_{left}}|I|\sim \sum_{(I,\vgamma,S)\in \S_{left}}|I|\lesssim \sum_{(I,\vgamma,S)\in \S_{left}}\frac{\|F_{j}f\|_{L^{2}_{\nu}(S)}^{2}}{\lambda^{2}}\lesssim \frac{\|f\|_{L^{2}}^{2}}{\lambda^{2}}.
\end{equation}
Applying the dual version of Proposition \ref{prop_selectL2} to $\left(\Omega \setminus \underset{T\in \T_{\infty}}{\bigcup}D_{T}\right) \setminus \underset{T\in \T_{left}}{\bigcup}D_{T} $ with \(\left(W_{\vgamma,\frac{1}{\abs{I}}}\setminus U_{\vgamma}^{j}\right)^{<j}\) replaced by \(\left(W_{\vgamma,\frac{1}{\abs{I}}}\setminus U_{\vgamma}^{j}\right)^{>j}\), we obtain another countable collection $\mathbf{T}_{right}$ of tents and a countable collection $\mathbf{S}_{right}$ of triples which satisfy equation similar to \eqref{eq877}. 
Define $\T=\T_{\infty}\cup \T_{left} \cup \T_{right}$. 
Direct calculation gives \eqref{eq_Tlarge_est}
\begin{equation}
    \sum_{\br{I,\vgamma}\in\T}\abs{I}=
    \sum_{\br{I,\vgamma}\in\T_\infty}\abs{I}+
    \sum_{\br{I,\vgamma}\in\T_{left}}\abs{I}+
    \sum_{\br{I,\vgamma}\in\T_{right}}\abs{I}
    \lesssim
    \frac{\nrm{f}_{L^2}^2}{\lambda^2}.
\end{equation}
% which completes the proof of \eqref{eq_Tlarge_est}.
Finally, notice that by Proposition \ref{prop_selectL2}, \(\T\) satisfies \eqref{select2-1}
\begin{equation}\label{eq_left_small}
    |I|^{-\frac{1}{2}}\nrm{
        1_{\Omega\setminus\bigcup_{T\in\T}D_T} F_jf
    }_{L^2_{\nu}\br{ I\times (W_{\vgamma,1/|I|}\setminus U_{\vgamma}^{j})^{<j}}}\leq \frac{\lambda}{\sqrt{2}},
\end{equation}
and the dual statement of \eqref{select2-1}
\begin{equation}\label{eq_right_small}
    |I|^{-\frac{1}{2}}\nrm{
        1_{\Omega\setminus\bigcup_{T\in\T}D_T} F_jf
    }_{L^2_{\nu}\br{ I\times (W_{\vgamma,1/|I|}\setminus U_{\vgamma}^{j})^{>j}}}\leq \frac{\lambda}{\sqrt{2}}.
\end{equation}
for all \(\br{I,\vgamma}\).
Combining \eqref{eq_Linfty_comp_threshold_less_lambda}, \eqref{eq_left_small}, and \eqref{eq_right_small} verifies that
\begin{equation}
    \nrm{1_{\Omega\setminus\bigcup_{T\in\T}D_T}F_j f}_{S^j}\leq  \lambda \vee \sqrt{\br{\frac{\lambda}{\sqrt{2}}}^2+\br{\frac{\lambda}{\sqrt{2}}}^2}=\lambda.
\end{equation}
That is, \(\T\) also satisfies \eqref{eq_Omegasmall_con}.
% Then $\T$ is a desired collection which satisfies \eqref{eq_Tlarge_est}, \eqref{eq_Omegasmall_con} and thus completes the proof of Proposition \ref{prop_L2_emb}.
This completes the proof of Proposition \ref{prop_L2_emb}.

\section{Proof of Proposition \ref{boundmodelform}: Bound of Model Form}\label{sec_proofmodelbound}
Notice that there is a decomposition \(V\setminus \Gamma=\bigsqcup_{\iota=1}^N V_\iota\) with \(V_\iota\) defined iteratively
\begin{equation}
    V_0:=\Gamma,\quad V_\iota:=\left\{ 
        \vbeta\in V\setminus \bigcup_{j<\iota}V_j 
    \: : \:
        d_\Gamma\br{\vbeta}=d_{\Gamma_\iota}\br{\vbeta}
    \right\}.
\end{equation}
As a result, we obtain a decomposition on the trilinear form \(\Lambda_m=\sum_{\iota=1}^N\Lambda_{m,\iota}\), where
\begin{equation}\label{eq_lambda_eq_modelform}
\Lambda_{m,\iota}\br{f_1,f_2,f_3}:=\int_{V_{\iota}}\int_{\mathbb{R}^{3}}K(\valpha, \vbeta)\cdot \prod_{j=1}^{3}\left(F_j f_{j}\right)(\alpha_{j},\vbeta) d\valpha d\mu (\vbeta).
\end{equation}
Thus, by triangle inequality, it suffices to deal with a single piece \(\Lambda_{m,\iota}\). Moreover, via a standard limiting argument, it suffices to perform the estimate with \(\R^3\) replaced by a set of the form \(\Br{-A,A}^3\) and \(V\) replaced by a compact subset \(V'\subseteq V\setminus \Gamma\). For simplicity, we may fix \(\iota\) and assume \(\Gamma=\Gamma_\iota\), \(K=1_{\Br{-A,A}^3\times \br{V_\iota\cap V'}}K\), and \(\Lambda_m=\Lambda_{m,\iota}\).

The remaining part is parallel to the restricted weak type estimate in \textit{(4.2)} of \cite{kovavc2015dyadic}.
Let $E_{j}\subseteq \mathbb{R}$ be measurable sets and $|f_{j}|\leq 1_{E_{j}}$, it suffices to show 
\begin{equation}\label{resweakest}
|\Lambda_{m}(f_{1},f_{2},f_{3})|\lesssim a_{2}^{\frac{1}{2}}a_{3}^{\frac{1}{2}}(1+\log \frac{a_{1}}{a_{2}}),
\end{equation}
where $a_{j}=|E_{\sigma (j)}|$ is a decreasing rearrangement $(a_{1}>a_{2}>a_{3})$. Since for any $\varepsilon>0$ and \(x\geq 1\), we have the the asymptotic
$
1+\log x =o(x^{\varepsilon}),
$
\begin{equation*}
a_{2}^{\frac{1}{2}}a_{3}^{\frac{1}{2}}(1+\log \frac{a_{1}}{a_{2}})<Ca_{1}^{\varepsilon}a_{2}^{\frac{1}{2}-\varepsilon}a_{3}^{\frac{1}{2}}.
\end{equation*}
Then, by interpolating the restricted weak type estimates, we obtain the strong bound in the local $L^{2}$ range. By $L^{2}$ normalization, $\widetilde{f_{j}}:=\frac{f_{j}}{|E_{j}|^{\frac{1}{2}}}$, \eqref{resweakest} reduces to
\begin{equation*}
|\Lambda_{m}(\widetilde{f}_{1},\widetilde{f}_{2},\widetilde{f}_{3})|\lesssim a_{1}^{\-\frac{1}{2}}(1+\log \frac{a_{1}}{a_{2}}).
\end{equation*}
Note that by Proposition \ref{prop_Linfty_emb} (Global estimate), 
\begin{equation*}
\|F_{j}\widetilde{f}_{j}\|_{S^{j}}\lesssim \|\widetilde{f}_{j}\|_{L^{\infty}}\leq \|E_{j}\|^{-\frac{1}{2}}=a_{\sigma^{-1}(j)}^{-\frac{1}{2}},
\end{equation*}
and by normalization, $\|\widetilde{f}_{j}\|_{L^{2}}\leq 1$.
  Let $n_{j}$ be the integer such that $2^{n_{j}-1}<a_{j}^{-\frac{1}{2}}\leq 2^{n_{j}}$. By design, $n_{1}\leq n_{2}\leq n_{3}$. We perform Proposition \ref{prop_L2_emb} (Bessel type estimate) iteratively.
  Let \(\widetilde{\Omega}_{n_3}=\Br{-A,A}^3 \times V' \) and \(\pi_i:\mathbb{R}^{3}\times V\to \mathbb{R}\times V\) be the following projection:
    \begin{equation*}
\pi_{i}\br{\valpha,\vbeta}:=\br{\alpha_i,\vbeta}.
    \end{equation*}
  Given a compact set \(\widetilde{\Omega}_{n}\subset \Br{-A,A}^3\times V'\)
  with the properties that for \(i,i'\in\BR{1,2,3}\),
  \begin{equation}\label{eq_omeg_omeh_omeg}
      \Omega_n:=\pi_i\widetilde{\Omega}_n=\pi_{i'}\widetilde{\Omega}_n
  \end{equation}
  and for all \(j\in\BR{1,2,3}\),
  \begin{equation}\label{eq_omeg_small_2n}
      \nrm{1_{\Omega_n}F_j\widetilde{f}_j}_{S^j}\lesssim 2^n,
  \end{equation}
  we apply Proposition \ref{prop_L2_emb} with \(\lambda=2^{n-1}\) for each $j\in \{1,2,3\}$ and obtained a collection of tents $\mathbf{T}_{n,j}$ such that
  \begin{equation*}
  \sum_{\br{I,\vgamma}\in \mathbf{T}_{n,j}}|I|\lesssim 2^{-2n}.
    \end{equation*}
    Let $\T_{n}:=\bigcup_{j=1}^{3}\T_{n,j}$. By triangle inequality,
    \begin{equation*}
        \sum_{\br{I,\vgamma}\in \mathbf{T}_n}|I|\lesssim 2^{-2n}.
    \end{equation*}
 %  and for $T=(I,\vgamma)\in \textbf{T}_{n}$,
 %  \begin{equation*}
 %  \|1_{\Omega_n}F_{j}\widetilde{f}_{j}\|_{S^{j}(T)}\lesssim \min \br{2^{n},2^{n_{\sigma^{-1}(j)}}}.
 % \end{equation*}
  We can associate the data $T=(I,\vgamma)$ with a region $D^i_T$ defined by
  \begin{equation*}
    D_T^i:=(I\ve_{i}\oplus \ve_{i}^{\perp})\times W_{\vgamma,1/|I|}
  \end{equation*}
  and define the following compact subset:
  \begin{equation*}
      \widetilde{\Omega}_{n-1}:=\widetilde{\Omega}_{n}\setminus  \bigcup_{T\in\T_n}\bigcup_{i=1}^3 D^i_T.
  \end{equation*}
  By construction, the set \(\widetilde{\Omega}_{n-1}\) satisfies \eqref{eq_omeg_omeh_omeg} and \eqref{eq_omeg_small_2n} with \(n\) replaced by \(n-1\). Through this iteration process, we obtain a nested sequence of compact sets
  \begin{equation*}
      \widetilde{\Omega}_{n_3}\supseteq \cdots \supseteq \widetilde{\Omega}_{n} \supseteq \widetilde{\Omega}_{n-1}\supseteq \cdots
  \end{equation*}
  and a countable collection of tents \(\T:=\bigcup_{n\leq n_3}\T_n\).
  Using the identity \eqref{eq_lambda_eq_modelform}, we dominate \(\Lambda_m\br{\widetilde{f}_1,\widetilde{f}_2,\widetilde{f}_3}\) in the following manner
    \begin{equation}
        \abs{
            \Lambda_m\br{\widetilde{f}_1,\widetilde{f}_2,\widetilde{f}_3}
        }
        \leq
        \sum_{n\leq n_3}\sum_{T\in\T_n}\sum_{i=1}^3
        \left\|1_{\widetilde{\Omega}_{n}}K(\valpha, \vbeta)\cdot \prod_{j=1}^{3}(F_{j}f_{j})(\alpha_{j},\vbeta)\right\|_{L^1_{\valpha, \mu(\vbeta)}(D^i_T)}.
    \end{equation}
  Apply Proposition \ref{propTentest} (Tent estimate), we obtain
\begin{equation*}
|\Lambda_{m}(\widetilde{f}_{1},\widetilde{f}_{2},\widetilde{f}_{3})| \lesssim \sum_{n\leq n_{3}}\sum_{T\in \textbf{T}_{n}}|I_{T}|\prod_{i=1}^{3} \| 1_{\Omega_n}F_{j}\widetilde{f}_{j}\|_{S^{j}(T)}
\lesssim \sum_{n\leq n_{3}}2^{-2n}\prod_{i=1}^{3}\min (2^{n},2^{n_{i}})
\end{equation*}
\begin{equation*}
=\sum_{n\leq n_{1}}2^{n}+\sum_{n_{1}\leq n< n_{2}}2^{n_{1}}+\sum_{n_{2}\leq n<n_{3}}2^{n_{1}}\cdot 2^{n_{2}-n}
\lesssim 2^{n_{1}}+(n_{2}-n_{1})2^{n_{1}}+2^{n_{1}}
\end{equation*}
\begin{equation*}
    =2^{n_{1}}(2+n_{2}-n_{1})
\lesssim a_{1}^{-\frac{1}{2}}(2+\log \frac{a_{1}}{a_{2}}).
\end{equation*}
which completes the proof of Proposition \ref{boundmodelform}.

%\section{Proof of Corollary \ref{maincor}}
%\input{proof of maincor}

\newpage
%\printbibliography[title={References}]

\bibliographystyle{plain}
\bibliography{ref}

\begin{align*}
    &\textsc{ Jiao Chen, Chongqing Normal University, China}\\
    &\textit{Email address:}\: \:\textbf{chenjiaobnu@163.com}\\
    \\
&\textsc{ Martin Hsu, Purdue University, USA}\\
 &\textit{Email address:}\: \:\textbf{hsu263@purdue.edu}\\
 \\
&\textsc{ Fred Yu-Hsiang Lin, University of Bonn, Germany}\\
 &\textit{Email address:}\: \:\textbf{ fredlin@math.uni-bonn.de}\\
\end{align*}

\end{document}